\newtheorem{lemma}{Lemma}[section]
\newtheorem{Thm}[lemma]{Theorem}
\newtheorem{remark}[lemma]{Remark}
\newtheorem{definition}[lemma]{Definition}
\newtheorem{hyp}[lemma]{Hypothesis}
\newtheorem{prop}[lemma]{Proposition}
\font\tenmsb=msbm10 \textfont\msbfam=\tenmsb \font\sevenmsb=msbm7
\font\fivemsb=msbm5
\providecommand{\keywords}[1]{\textbf{\textbf{Keywords:}} #1}
\begin{document}
\bigbreak

\title{\bf Moderate deviation principle for the two-dimensional stochastic Navier-Stokes equations with anisotropic viscosity \thanks{Research supported by the DFG through the CRC 1283 "Taming uncertainty and profiting from randomness and low regularity in analysis, stochastics and their applications".}}
\author[1,2]{\bf Bingguang Chen\thanks{bchen@math.uni-bielefeld.de}}
\date{}
\affil[1]{Academy of Mathematics and System Science, Chinese Academy of Science, Beijing 100190, China }
\affil[2]{Department of Mathematics, University of Bielefeld, D-33615 Bielefeld, Germany}
\renewcommand*{\Affilfont}{\small\it}
\renewcommand\Authand{, }

\maketitle

\begin{abstract}
In this paper, we prove a central limit theorem and establish a moderate deviation principle for  the the two-dimensional stochastic Navier-Stokes equations with anisotropic viscosity. The proof for moderate deviation principle is based on the weak convergence approach. 
\end{abstract}

\keywords{Stochastic Navier-Stokes equations; Anisotropic viscosity; Central limit theorem; Moderate deviation principle;    Weak convergence approach }
\vspace{2mm}

\section{Introduction}

The main aim of this work is to establish central limit theorem and moderate deviation principle  for the stochastic Navier-Stokes equation with anisotropic viscosity. We consider the following stochastic Navier-Stokes equation with anisotropic viscosity on the two dimensional (2D) torus $\mathbb{T}^2=\mathbb{R}^2/(2\pi\mathbb{Z})^2$:
\begin{equation}\label{equation before projection}\aligned
&du=\partial^2_1 udt-u\cdot \nabla u dt+\sigma(t, u)dW(t)-\nabla pdt,\\
&\text{div } u=0,\\
&u(0)=u_0,
\endaligned
\end{equation}
where $u(t,x)$ denotes the velocity field at time $t\in[0,T]$ and position $x\in\mathbb{T}^2$, $p$ denotes the pressure field, $\sigma$ is the random external force and $W$ is an $l^2$-cylindrical Wiener process. 

 Let's first recall the classical Navier-Stokes (N-S) equation which is given by
\begin{equation}\label{equation classical}\aligned
&du=\nu\Delta udt-u\cdot \nabla u dt-\nabla pdt,\\
&\text{div } u=0,\\
&u(0)=u_0,
\endaligned
\end{equation}
where $\nu>0$ is the viscosity of the fluid. (\ref{equation classical}) describes the time evolution of an incompressible fluid. In 1934, J. Leray proved global existence of finite energy weak solutions for the deterministic case in the whole space $\mathbb{R}^d$ for $d=2, 3$ in the seminar paper \cite{L33}. For more results on deterministic N-S equation, we refer to \cite{CKN86}, \cite{Te79}, \cite{Te95}, \cite{KT01} and reference therein. For the stochastic case, there exists a great amount of literature too. The existence and uniqueness of solutions and ergodicity property to the stochatic 2D Navier-Stokes equation have been obtained (see e.g. \cite{FG95}, 
\cite{MR05}, \cite{HM06}). Large deviation principles for the two-dimensional stochastic N-S equations  have been established in \cite{CM10} and \cite{SS06}. Moderate deviation principles for the two-dimensional stochastic N-S equations  have been established in \cite{WZZ}.

Compared to \eqref{equation classical}, \eqref{equation before projection} only has partial dissipation, which can be viewed as an intermediate equation between N-S equation and Euler equation.  System of this type appear in in geophysical fluids (see for instance \cite{CDGG06} and \cite{Ped79}). Instead of putting the classical viscosity $-\nu\Delta$ in (\ref{equation classical}), meteorologist often modelize turbulent diffusion by putting a viscosity of the form: $-\nu_h\Delta_h-\nu_3\partial^2_{x_3}$, where $\nu_h$ and $\nu_3$ are empiric constants, and $\nu_3$ is usually much smaller than $\nu_h$. We refer to the book of J. Pedlovsky \cite[Chapter 4]{Ped79} for a more complete discussion. For the 3 dimensional case there is no result concerning global existence of weak solutions. 

In the 2D case, \cite{LZZ18} investigates both the deterministic system  and the stochastic system (\ref{equation before projection}) for $H^{0,1}$ initial value (For the definition of space see Section 2). The main difference in obtaining the global well-posedness for \eqref{equation before projection} is that the $L^2$-norm estimate is not enough to establish $L^2([0,T], L^2)$ strong convergence due to lack of compactness in the second direction. In \cite{LZZ18}, the proof is based on an additional $H^{0,1}$-norm estimate.  In this paper, we want to investigate deviations of stochastic Navier-Stokes equations from the deterministic case.

The large deviation theory concerns the asymptotic behavior of a family of random variables $X_\varepsilon$ and we refer to the monographs \cite{DZ92} and \cite{Str84} for many historical remarks and extensive references. It asserts that for some tail or extreme event $A$, $P(X_\varepsilon \in A)$ converges to zero exponentially fast as $\varepsilon\rightarrow 0$ and the exact rate of convergence is given by the so-called rate function. The large deviation principle was first established by Varadhan in \cite{V66} and he also studied the small time asymptotics of finite dimensional diffusion processes in \cite{Var67}. Since then, many important results concerning the large deviation principle have been established. For results on the large deviation principle for stochastic differential equations in finite dimensional case we refer to \cite{FW84}. For the extensions to infinite dimensional diffusions or SPDE, we refer the readers to \cite{BDM08},  \cite{CM10},  \cite{DM09}, \cite{Liu09}, \cite{LRZ13}, \cite{RZ08}, \cite{XZ08}, \cite{Zh00} and the references therein. 

Moderate deviation is the theory filling in the gap between the central limit theorem and the large deviation principle (see Section 2).  Moderate deviation estimates arise in the theory of statistical inference. It can provide us with the rate of convergence and a useful method for constructing asymptotic confidence intervals, see \cite{E12}, \cite{GZ11}, \cite{IK03}, \cite{K83} and references therein. For the study of MDP for general Markov process see \cite{WuLiming95}.   Resulst of MDP for stochastic partial differential equations have been obtained in \cite{WZ14MDP}, \cite{BDG16MDP},  \cite{DXZZ17MDP} and references therein. 

For $\varepsilon>0$, consider the equation:
\begin{equation}\label{equation after projection with varepsilon}\aligned
&du^\varepsilon(t)=\partial^2_1 u^\varepsilon(t)dt-B(u^\varepsilon(t))dt+\sqrt{\varepsilon}\sigma(t, u^\varepsilon(t))dW(t),\\
&u^\varepsilon(0)=u_0,
\endaligned
\end{equation}
where the definition of $B$ will be given in Section 2.

As $\varepsilon\rightarrow 0$, $u^\varepsilon$  will converges to the solution to the following deterministic equation:
\begin{equation}\label{determinstic equation after projection}\aligned
&du^0(t)=\partial^2_1 u^0(t)dt-B(u^0(t))dt,\\
&u^0(0)=u_0.
\endaligned
\end{equation}

We will investigate deviations of $u^\varepsilon$ from the deterministic solution $u^0$. That is, the asymptotic behaviour of the trajectory
$$\frac{1}{\sqrt{\varepsilon}\lambda(\varepsilon)}(u^\varepsilon-u^0),$$
where $\lambda(\varepsilon)$ is some deviation scale which strongly influence the behaviour.

(1) The case $\lambda(\varepsilon)=\frac{1}{\sqrt{\varepsilon}}$ provides large deviation principle (LDP) estimates, which has been studied in \cite{CZ20}.

(2) If $\lambda(\varepsilon)=1$, we are in the domain of the central limit theorem (CLT). For the study of the central limit theorem for stochastic (partial) differential equation, we refer the readers to \cite{WZZ},  \cite{CLWY18} and \cite{WZ14MDP}.  We will show that $\frac{u^\varepsilon-u^0}{\sqrt{\varepsilon}}$ converges to a solution of a stochastic equation as $\varepsilon\rightarrow 0$ in Section 3.

(3) To fill in the gap between the CLT and LDP, we will study the so-called moderate deviation principle (MDP). The moderate deviation principle refines the estimates obtained through the central limit theorem.  It provides the asymptotic behaviour for $P(\|u^\varepsilon-u^0\|\geqslant \delta \sqrt{\varepsilon}\lambda(\varepsilon))$ while CLT gives bounds for $P(\|u^\varepsilon-u^0\|\geqslant \delta \sqrt{\varepsilon})$.  Throughout this paper we may  assume 
$$\lambda(\varepsilon)\rightarrow\infty, \text{ }\text{ }\text{ }\sqrt{\varepsilon}\lambda(\varepsilon)\rightarrow 0\text{ as }\varepsilon\rightarrow0.$$

We  study  the moderate deviations by using the weak convergence approach. This approach is mainly based on a variational representation formula for certain functionals of infinite dimensional Brownian Motion, which was established by Budhiraja and Dupuis in \cite{BD00}. The main advantage of the weak convergence approach is that one can avoid some exponential probability estimates, which might be very difficult to derive for many infinite dimensional models. To use the weak convergence approach, we need to prove two conditions in Hypothesis \ref{Hyp}. We will  use the argument  in \cite{WZZ}, in which the authors first establish the convergence in  $L^2([0,T],L^2)$  and then by using this and It\^o's formula to obtain $L^\infty([0,T], L^2)\bigcap L^2([0,T], H^{1})$ convergence. As mentioned above, due to  the lack of compactness in the second direction, we need to do $H^{0,1}$ estimate for the skeleton equation \eqref{skeleton eq. MDP}, which requires $H^{0,2}$ estimates of solution to the deterministic equation \eqref{determinstic equation after projection}. To obtain this, we use a commutator estimate (see Lemma \ref{commutator estimates}) from \cite{CDGG00}. This also leads to $H^{0,2}$ condition for the initial value.

{\textbf{Organization of the paper}}

In Section 2, we introduce the basic  notation, definition and recall some preliminary results.  In Section 3, we will build the central limit theorem. In Section 4, we prove the moderate deviation principle  for the the two-dimensional stochastic Navier-Stokes equations with anisotropic viscosity.

\section{Preliminary}

 \vskip.10in
{\textbf{Function spaces on $\mathbb{T}^2$}}

We first recall some definitions of function spaces for the two dimensional torus $\mathbb{T}^2$. 

Let $\mathbb{T}^2=\mathbb{R}/2\pi\mathbb{Z}\times\mathbb{R}/2\pi\mathbb{Z}=(\mathbb{T}_h, \mathbb{T}_v)$ where $h$ stands for the horizonal variable $x_1$ and $v$ stands for the vertical variable $x_2$. For exponents $p,q\in [1, \infty)$, we denote the space $L^p(\mathbb{T}_h, L^q(\mathbb{T}_v))$ by $L^p_h(L^q_v)$, which is endowed with the norm
$$\|u\|_{L^p_h(L^q_v)(\mathbb{T}^2)}:=\{\int_{\mathbb{T}_h}(\int_{\mathbb{T}_v}|u(x_1, x_2)|^qdx_2)^{\frac{p}{q}}dx_1\}^{\frac{1}{p}}.$$

Similar notation for $L^p_v(L^q_h)$. In the case $p, q=\infty$, we denote $L^\infty$ the essential supremum norm. 
Throughout the paper, we denote various positive constants by the same letter $C$.

For $u\in L^2(\mathbb{T}^2)$, we consider the Fourier expansion of $u$:
$$u(x)=\sum_{k\in\mathbb{Z}^2}\hat{u}_ke^{ik\cdot x} \text{ } \text{with} \text{ } \hat{u}_k=\overline{\hat{u}_{-k}},$$
where $\hat{u}_k:=\frac{1}{(2\pi)^2}\int_{[0, 2\pi]\times[0, 2\pi]}u(x)e^{-ik\cdot x}dx$ denotes the Fourier coefficient of $u$ on $\mathbb{T}^2$.

Define the Sobolev norm:
$$\|u\|_{H^s}^2:=\sum_{k\in\mathbb{Z}^2}(1+|k|^2)^s |\hat{u}_k|^2,$$
and the anisotropic Sobolev norm:
$$\|u\|^2_{H^{s, s'}}=\sum_{k\in\mathbb{Z}^2}(1+|k_1|^2)^s(1+|k_2|^2)^{s'}|\hat{u}_k|^2,$$
where $k=(k_1, k_2)$. We define the Sobolev spaces $H^s(\mathbb{T}^2)$, $H^{s,s'}(\mathbb{T}^2)$ as the completion of $C^\infty(\mathbb{T}^2)$ with the norms $\|\cdot\|_{H^s}$, $\|\cdot\|_{H^{s,s'}}$ respectively.  The notation $L^p_v(H^s_h)$ is given by 
$$\|u\|_{L^p_v(H^s_h)}:=\left(\int_{\mathbb{T}_v}\|u(\cdot, x_2)\|^p_{H^s(\mathbb{T}_h)}dx_2\right)^\frac{1}{p}$$

Let us recall the definition of anisotropic dyadic decomposition of the Fourier space, which will lead to another represnetation of $H^{s,s'}$ in the sense of Besov space. For a general introduction to the theory of Besov space we refer to \cite{BCD11}, \cite{Tri78}, \cite{Tri06}. 

 Let $\chi,\theta\in \mathcal{D}$ be nonnegative radial functions on $\mathbb{R}$, such that

i. the support of $\chi$ is contained in a ball and the support of $\theta$ is contained in an annulus;

ii. $\chi(z)+\sum_{j\geq0}\theta(2^{-j}z)=1$ for all $z\in \mathbb{R}$.

iii. $\textrm{supp}(\chi)\cap \textrm{supp}(\theta(2^{-j}\cdot))=\emptyset$ for $j\geq1$ and $\textrm{supp}\theta(2^{-i}\cdot)\cap \textrm{supp}\theta(2^{-j}\cdot)=\emptyset$ for $|i-j|>1$.

We call such $(\chi,\theta)$ dyadic partition of unity.  The Littlewood-Paley blocks in the vertical variable are now defined as $u=\sum_{j\geqslant -1}\Delta^v_j u$, where
$$\Delta^v_{-1}u=\mathcal{F}^{-1}(\chi(|k_2|)\hat{u})\quad \Delta^v_{j}u=\mathcal{F}^{-1}(\theta(2^{-j}|k_2|)\hat{u}), \text{ }k_2\in \mathbb{Z},$$
where $\mathcal{F}^{-1}$ is the inverse Fourier transform. The anisotropic Sobolev norm can also be defined as follows:
$$\|u\|_{H^{s,s'}}=\left(\sum_{j\geq -1} 2^{2js'}\|\Delta^v_j u\|^2_{L^2_v(H^s(\mathbb{T}_h))}\right)^\frac{1}{2}.$$

To formulate the stochastic Navier-Stokes equations with anisotropic viscosity, we need the following spaces:
$$H:=\{u\in L^2(\mathbb{T}^2; \mathbb{R}^2); \text{div}\text{ } u=0\},$$
$$V:=\{u\in H^1(\mathbb{T}^2; \mathbb{R}^2); \text{div}\text{ } u=0\},$$
$$\tilde{H}^{s,s'}:=\{u\in H^{s,s'}(\mathbb{T}^2;\mathbb{R}^2); \text{div}\text{ } u=0\}.$$
Moreover, we use $\langle\cdot, \cdot\rangle $ to denote the scalar product (which is also the inner product of $L^2$ and $H$)
$$\langle u, v\rangle =\sum_{j=1}^2\int_{\mathbb{T}^2}u^j(x)v^j(x)dx$$
and $\langle\cdot, \cdot\rangle_X$ to denote the inner product of Hilbert space $X$ where $X=l^2$, $V$ or $\tilde{H}^{s,s'}$.

Due to the divergence free condition, we need the Larey projection operator $P_H: \text{ } L^2(\mathbb{T}^2)\rightarrow H$:
$$P_H: u\mapsto u-\nabla \Delta^{-1}(\text{div } u).$$
 
By applying the operator $P_H$ to (\ref{equation before projection})
we can rewrite the equation in the following form:
\begin{equation}\label{equation after projection}\aligned
&du(t)=\partial^2_1 u(t)dt-B(u(t))dt+\sigma(t, u(t))dW(t),\\
&u(0)=u_0,
\endaligned
\end{equation}
where the nonlinear operator $B(u,v)=P_H(u\cdot \nabla v)$ with the notation $B(u)=B(u,u)$. Here we use the same symbol  $\sigma$ after projection for simplicity.

For $u,v,w\in V$, define
$$b(u,v,w):=\langle B(u,v), w\rangle.$$
We have $b(u,v,w)=-b(u,w,v)$ and $b(u,v,v)=0$.

 We put some estimates of $b$ in the Appendix.  


 \vskip.10in
{\textbf{Large deviation principle}}

We recall the definition of the large deviation principle. For a general introduction to the theory we refer to \cite{DZ92}, \cite{DZ98}.

\begin{definition}[Large deviation principle]
Given a family of probability measures $\{\mu_\varepsilon\}_{\varepsilon>0}$ on a  metric space $(E,\rho)$ and a lower semicontinuous function $I:E\rightarrow [0,\infty]$ not identically equal to $+\infty$. The family $\{\mu_\varepsilon\}$ is said to satisfy the large deviation principle(LDP) with respect to the rate function $I$ if\\
(U) for all closed sets $F\subset E$ we have 
$$\limsup_{\varepsilon\rightarrow 0}\varepsilon\log\mu_\varepsilon(F)\leqslant-\inf_{x\in F}I(x),$$
(L) for all open sets $G\subset E$ we have 
$$\liminf_{\varepsilon\rightarrow 0}\varepsilon\log\mu_\varepsilon(G)\geqslant-\inf_{x\in G}I(x).$$

A family of random variable is said to satisfy large deviation principle if the law of these random variables satisfy large deviation princple.

Moreover, $I$ is a good rate function if its level sets $I_r:=\{x\in E:I(x)\leqslant r\}$ are compact for arbitrary $r\in (0,+\infty)$.
\end{definition}
 \vskip.10in

\begin{definition}[Laplace principle]
A sequence of random variables $\{X^\varepsilon\}$ is said to satisfy the Laplace principle with rate function $I$ if for each bounded continuous real-valued function $h$ defined on $E$
$$\lim_{\varepsilon\rightarrow 0}\varepsilon\log E\left[e^{-\frac{1}{\varepsilon}h(X^\varepsilon)}\right]=-\inf_{x\in E}\{h(x)+I(x)\}.$$
\end{definition}

Given a probabilty space $(\Omega,\mathcal{F},P)$, the random variables $\{Z_\varepsilon\}$ and $\{\overline{Z}_\varepsilon\}$ which take values in $(E,\rho)$ are called exponentially equivalent if for each $\delta>0$, 
$$\lim_{\varepsilon\rightarrow 0}\varepsilon\log P(\rho(Z_\varepsilon,\overline{Z}_\varepsilon)>\delta)=-\infty.$$
\vskip.10in
\begin{lemma}[{\cite[Theorem 4.2.13]{DZ98}}]\label{EXEQ}
If an LDP with a rate function $I(\cdot)$ holds for the random variables $\{Z_\varepsilon\}$, which are exponentially equivalent to $\{\overline{Z}_\varepsilon\}$, then the same LDP holds for $\{\overline{Z}_\varepsilon\}$.
\end{lemma}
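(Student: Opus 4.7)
The plan is to verify the two parts (U) and (L) of the LDP for $\{\overline{Z}_\varepsilon\}$ separately, in each case using exponential equivalence to replace $\overline{Z}_\varepsilon$ by $Z_\varepsilon$ up to an error that is negligible at any exponential rate. The recurring tool is the elementary identity $\limsup_{\varepsilon\to 0}\varepsilon\log(a_\varepsilon+b_\varepsilon)=\max\{\limsup_\varepsilon\varepsilon\log a_\varepsilon,\limsup_\varepsilon\varepsilon\log b_\varepsilon\}$ for non-negative sequences, which converts union-bound splits into maxima on the exponential scale.

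For the upper bound, I fix a closed set $F\subset E$ and $\delta>0$, and set $F_\delta=\{x\in E:\rho(x,F)\leq\delta\}$, which is closed. The inclusion $\{\overline{Z}_\varepsilon\in F\}\subset\{Z_\varepsilon\in F_\delta\}\cup\{\rho(Z_\varepsilon,\overline{Z}_\varepsilon)>\delta\}$, combined with the workhorse identity, the assumed (U) for $\{Z_\varepsilon\}$, and exponential equivalence, yields $\limsup_\varepsilon\varepsilon\log P(\overline{Z}_\varepsilon\in F)\leq -\inf_{x\in F_\delta}I(x)$. I then send $\delta\downarrow 0$ and use lower semicontinuity of $I$, together with compactness of sub-level sets in the good rate function case, to extract a limit of near-minimizers of $I$ on $F_\delta$ that lies in $F$, giving $\inf_{F_\delta}I\to\inf_F I$.

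For the lower bound, I fix an open $G\subset E$ and $x\in G$ with $I(x)<\infty$, pick $\delta>0$ so small that $B_\delta(x)\subset G$, and use the symmetric inclusion to write $P(Z_\varepsilon\in B_{\delta/2}(x))\leq P(\overline{Z}_\varepsilon\in G)+P(\rho(Z_\varepsilon,\overline{Z}_\varepsilon)>\delta/2)$, i.e.\ $P(\overline{Z}_\varepsilon\in G)\geq P(Z_\varepsilon\in B_{\delta/2}(x))-P(\rho(Z_\varepsilon,\overline{Z}_\varepsilon)>\delta/2)$. The LDP lower bound for $\{Z_\varepsilon\}$ gives $P(Z_\varepsilon\in B_{\delta/2}(x))\geq e^{-(I(x)+\eta)/\varepsilon}$ for any $\eta>0$ and sufficiently small $\varepsilon$, while exponential equivalence forces $P(\rho(Z_\varepsilon,\overline{Z}_\varepsilon)>\delta/2)\leq e^{-M/\varepsilon}$ for every fixed $M$. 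Choosing $M>I(x)+\eta$ makes the subtracted term at most half of the first for small $\varepsilon$, so $\liminf_\varepsilon\varepsilon\log P(\overline{Z}_\varepsilon\in G)\geq -(I(x)+\eta)$; letting $\eta\downarrow 0$ and then taking the supremum over $x\in G$ finishes (L).

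The main technical nuisance is precisely this subtraction in the lower bound: sums collapse cleanly under $\varepsilon\log$, but differences do not. Exponential equivalence is exactly the hypothesis that allows the correction term to be absorbed into a harmless constant factor, so this is a feature of the argument rather than a genuine obstacle. The other delicate point is the $\delta\downarrow 0$ passage in (U), where lower semicontinuity (and, for a merely l.s.c.\ rate function, the additional compactness afforded by goodness) is needed to ensure that near-minimizers on $F_\delta$ accumulate at a minimizer on $F$.
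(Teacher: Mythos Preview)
The paper does not give its own proof of this lemma; it simply cites \cite[Theorem 4.2.13]{DZ98}. Your argument is precisely the standard proof from that reference, and it is correct. One small remark: the paper's statement says ``rate function'' rather than ``good rate function'', but the cited theorem in \cite{DZ98} does assume goodness, and you rightly flag that the $\delta\downarrow 0$ step in the upper bound uses compactness of sub-level sets to force near-minimizers on $F_\delta$ to accumulate in $F$; without goodness this step can fail.
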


 \vskip.10in
{\textbf{Existence and uniqueness of  solutions}}

We introduce the precise assumptions on the diffusion coefficient $\sigma$. Given a complete probability space $(\Omega, \mathcal{F}, P)$ with filtration $\{\mathcal{F}_t\}_{t\geqslant 0}$.  Let $L_2(l^2,U)$ denotes the Hilbert-Schmidt norms from $l^2$ to $U$ for a Hilbert space $U$.  We recall the following conditions for $\sigma$ from \cite{LZZ18}:

(i) \textbf{Growth condition}

There exists nonnegative constants $K'_i$, $K_i$, $\tilde{K}_i$ ($i=0,1,2$) such that for every $t\in[0,T]$:
 \vskip.10in
(A0) $\|\sigma(t,u)\|^2_{L_2(l^2, H^{-1})}\leqslant K_0'+K_1'\|u\|^2_{H}$;

(A1) $\|\sigma(t,u)\|^2_{L_2(l^2, H)}\leqslant K_0+K_1\|u\|^2_{H}+K_2\|\partial_1 u\|^2_H$;

(A2) $\|\sigma(t,u)\|^2_{L_2(l^2, H^{0,1})}\leqslant \tilde{K}_0+\tilde{K}_1\|u\|^2_{H^{0,1}}+\tilde{K}_2(\|\partial_1 u\|_H^2+\|\partial_1\partial_2u\|^2_H)$;

 \vskip.10in
(ii)\textbf{Lipschitz condition}

There exists nonnegative constants $L_1, L_2$ such that:
 \vskip.10in
(A3) $\|\sigma(t,u)-\sigma(t,v)\|^2_{L_2(l^2, H)}\leqslant L_1\|u-v\|^2_{H}+L_2\|\partial_1(u-v)\|^2_H$.

 \vskip.10in

The following theorem from \cite{LZZ18} shows the well-posedness of equation (\ref{equation after projection}):

\begin{Thm}[{\cite[Theorem 4.1, Theorem 4.2]{LZZ18}}]\label{ex. and uniq. of solution}
Under the assumptions (A0), (A1), (A2) and (A3) with $K_2<\frac{2}{21}, \tilde{K}_2<\frac{1}{5}, L_2<\frac{1}{5}$, equation (\ref{equation after projection}) has a unique probabilistically strong solution $u\in L^\infty([0,T], \tilde{H}^{0,1})\cap L^2([0, T], \tilde{H}^{1,1})\cap C([0,T], H^{-1})$ for $u_0\in\tilde{H}^{0,1}$. 
\end{Thm}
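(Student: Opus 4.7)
The plan is to build a solution by Galerkin approximation with a priori bounds at two regularity levels, extract a limit by compactness, and close pathwise uniqueness by a direct energy argument, which by Yamada--Watanabe upgrades the martingale solution so obtained to a probabilistically strong one. Fix an orthonormal basis $\{e_k\}_{k\geq 1}$ of $H$ consisting of smooth divergence-free eigenfunctions of the Stokes operator on $\mathbb{T}^2$, and let $\Pi_n$ denote the $H$-orthogonal projection onto $H_n:=\mathrm{span}\{e_1,\dots,e_n\}$. The Galerkin scheme
\[
du^n = \Pi_n\partial_1^2 u^n\,dt - \Pi_n B(u^n)\,dt + \Pi_n\sigma(t,u^n)\,dW,\qquad u^n(0)=\Pi_n u_0,
\]
is a finite-dimensional It\^o SDE with locally Lipschitz coefficients, so it admits a unique local strong solution, global in time thanks to the a priori estimates below.

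Applying It\^o to $\|u^n\|_H^2$ and using $b(u^n,u^n,u^n)=0$, the dissipation $-2\|\partial_1 u^n\|_H^2$ absorbs the $K_2\|\partial_1 u^n\|_H^2$ coming from assumption (A1) since $K_2<2/21<2$; BDG and Gronwall then yield a uniform bound on $\mathbb{E}\bigl[\sup_{t\leq T}\|u^n(t)\|_H^2 + \int_0^T\|\partial_1 u^n\|_H^2\,dt\bigr]$. The main new ingredient is the $\tilde{H}^{0,1}$ estimate: It\^o on $\|u^n\|_{\tilde{H}^{0,1}}^2$ produces the partial dissipation $-2(\|\partial_1 u^n\|_H^2+\|\partial_1\partial_2 u^n\|_H^2)$, while the nonlinear contribution $\langle B(u^n),u^n\rangle_{\tilde{H}^{0,1}}$, which is no longer antisymmetric, is controlled via the anisotropic commutator estimate of Lemma \ref{commutator estimates} from \cite{CDGG00} by a quantity of the form $C\|u^n\|_{\tilde{H}^{0,1}}\|\partial_1 u^n\|_H\|\partial_1\partial_2 u^n\|_H$, absorbed by Cauchy--Schwarz into the dissipation. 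Combined with (A2) and $\tilde{K}_2<1/5$, Gronwall delivers a uniform bound on $\mathbb{E}\bigl[\sup_{t\leq T}\|u^n\|_{\tilde{H}^{0,1}}^2+\int_0^T\|u^n\|_{\tilde{H}^{1,1}}^2\,dt\bigr]$, which is where the hypothesis $u_0\in\tilde{H}^{0,1}$ is needed.

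These bounds, together with a uniform $H^{-1}$ modulus-of-continuity estimate read off the equation, yield tightness of the laws of $\{u^n\}$ on $L^2([0,T];H)\cap C([0,T];H^{-1})$ via an Aubin--Lions argument, the key compact embedding being $\tilde{H}^{1,1}\hookrightarrow H$. A Skorokhod representation provides a limit on a new stochastic basis, and strong $L^2([0,T];H)$ convergence is enough to pass to the limit in the quadratic drift $B$; identifying the martingale part gives a martingale solution with the stated regularity. For pathwise uniqueness, set $w=u-v$; It\^o on $\|w\|_H^2$ combined with $\langle B(v,w),w\rangle=0$, the anisotropic trilinear estimate $|b(w,w,u)|\leq \tfrac12\|\partial_1 w\|_H^2+C\|u\|_{\tilde{H}^{0,1}}^2\|w\|_H^2$ (from the $L^4_hL^\infty_v$--$L^\infty_hL^2_v$ splittings recalled in the Appendix), and (A3) with $L_2<1/5$, yields a Gronwall inequality for $\mathbb{E}\|w(t)\|_H^2$ with the integrable weight $\|u(\cdot)\|_{\tilde{H}^{0,1}}^2$, forcing $w\equiv 0$.

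The main obstacle is the $\tilde{H}^{0,1}$ energy estimate: since the viscosity dissipates only in $x_1$, the isotropic Navier--Stokes energy method does not close, and one is forced to work at the $\tilde{H}^{0,1}$ level where the nonlinear term loses antisymmetry. The commutator estimate of \cite{CDGG00} is precisely what lets one trade that non-antisymmetric contribution for partial dissipation, and it is the Cauchy--Schwarz absorption that dictates the smallness thresholds $K_2<2/21$, $\tilde{K}_2<1/5$, $L_2<1/5$.
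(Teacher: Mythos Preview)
The paper does not supply its own proof of this theorem: it is quoted verbatim from \cite[Theorems~4.1 and~4.2]{LZZ18} as a preliminary result, so there is no in-paper argument to compare against. Your outline is nonetheless faithful to the strategy of \cite{LZZ18} as one can infer from the way the present paper later recycles pieces of that proof (see the Galerkin argument in Lemma~\ref{wellposedness limit eq. of CLT}, the $\tilde H^{0,1}$ energy estimate in Lemma~\ref{estimate eq. for weak convergence}, the tightness Lemma~\ref{tightness lemma}, and the pathwise uniqueness computation inside Lemma~\ref{Girsanov thm to prove existence MDP}).

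Two small corrections are worth recording. First, for the $\tilde H^{0,1}$ nonlinear term $\langle \partial_2 u^n,\partial_2(u^n\!\cdot\!\nabla u^n)\rangle$ the paper (and \cite{LZZ18}) does not invoke the full commutator Lemma~\ref{commutator estimates}; that machinery is reserved in this paper for the $\tilde H^{0,2}$ estimate of Lemma~\ref{wellposedness result for u0 in H02}. At the $\tilde H^{0,1}$ level the direct anisotropic bound of Lemma~\ref{estimate for b with partial_2},
\[
|\langle \partial_2 u,\partial_2(u\cdot\nabla u)\rangle|\le a\|\partial_1\partial_2 u\|_{L^2}^2+C(1+\|\partial_1 u\|_{L^2}^2)\|\partial_2 u\|_{L^2}^2,
\]
already suffices, and it is this estimate that produces the random weight $\|\partial_1 u\|_H^2$ handled by the exponential factor $e^{-k\int_0^t\|\partial_1 u\|_H^2 ds}$ (compare \eqref{eq02 in lemma estimate eq. for weak convergence MDP}). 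Second, in your uniqueness step the trilinear bound coming from Lemma~\ref{b(u,v,w)} is
\[
|b(w,u,w)|\le \alpha\|\partial_1 w\|_H^2+C\|u\|_{\tilde H^{1,1}}^2\|w\|_H^2,
\]
so the Gronwall weight is $\|u\|_{\tilde H^{1,1}}^2\in L^1(0,T)$, not $\|u\|_{\tilde H^{0,1}}^2$; and since this weight is random one cannot Gronwall the expectation directly but must first multiply by $e^{-q(t)}$ with $q(t)=k\int_0^t\|u\|_{\tilde H^{1,1}}^2 ds$ (or use a stopping-time localization), exactly as done in the proof of Lemma~\ref{Girsanov thm to prove existence MDP}. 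With these adjustments your sketch is correct.
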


\section{Central limit theorem}

In this section, we will establish the central limit theorem. Let $u^\varepsilon$ be the solution to (\ref{equation after projection with varepsilon}) and $u^0$ the solution to (\ref{determinstic equation after projection}). Then we have the  following estimates from Lemma 3.5, Lemma 4.1, Lemma 4.2 and Lemma 4.4 in \cite{LZZ18}:

\begin{lemma}\label{estimates from LZZ18}
Assume (A0)-(A3) hold with $K_2<\frac{2}{21}, \tilde{K}_2<\frac{1}{5}, L_2<\frac{1}{5}$, there exists $\varepsilon_0>0$ such that 
$$\sup_{\varepsilon\in(0,\varepsilon_0)}E\left(\sup_{t\in[0,T]}\|u^\varepsilon(t)\|^2_H+\int^T_0\|u^\varepsilon(s)\|^2_{\tilde{H}^{1,0}}ds\right)\leqslant C.$$
Particularly,
$$\sup_{t\in[0,T]}\|u^0(t)\|^2_{\tilde{H}^{0,1}}+\int^T_0\|u^0(s)\|^2_{\tilde{H}^{1,1}}ds\leqslant C.$$
\end{lemma}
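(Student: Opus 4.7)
The plan is to establish both bounds via anisotropic energy estimates. The dissipation $\partial_1^2$ yields control only in the horizontal direction, so uniform $L^2_T \tilde H^{1,0}$-regularity comes out naturally from $\langle u,\partial_1^2 u\rangle=-\|\partial_1 u\|_H^2$, and the conditions $K_2<2/21$ and $\tilde K_2<1/5$ are calibrated precisely so that the natural It\^o and nonlinear corrections can be absorbed by this horizontal dissipation.

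For the stochastic bound, I would apply It\^o's formula to $\|u^\varepsilon(t)\|_H^2$. The drift contribution from $\partial_1^2 u^\varepsilon$ is $-2\|\partial_1 u^\varepsilon\|_H^2$; the nonlinear term vanishes because $\langle B(u^\varepsilon),u^\varepsilon\rangle=b(u^\varepsilon,u^\varepsilon,u^\varepsilon)=0$; and the It\^o correction $\varepsilon\|\sigma(s,u^\varepsilon)\|_{L_2(l^2,H)}^2$ is controlled via (A1) by $\varepsilon(K_0+K_1\|u^\varepsilon\|_H^2+K_2\|\partial_1 u^\varepsilon\|_H^2)$. Choosing $\varepsilon_0$ so small that $\varepsilon K_2<1$ throughout $(0,\varepsilon_0)$ keeps a strictly positive fraction of the dissipation on the left. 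Applying Burkholder--Davis--Gundy to the martingale $\int_0^t\langle u^\varepsilon,\sigma\,dW\rangle$, absorbing a further small multiple of the dissipation from BDG, and closing with Gronwall's inequality will produce the claimed uniform bound over $\varepsilon\in(0,\varepsilon_0)$.

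For the deterministic bound, the $H$-bound on $u^0$ follows from the same identity without stochastic terms. To upgrade to $\tilde H^{0,1}$, I would differentiate the equation in $x_2$ and test against $\partial_2 u^0$. Expanding $\partial_2 B(u^0)=B(\partial_2 u^0,u^0)+B(u^0,\partial_2 u^0)$ and noting $\langle B(u^0,\partial_2 u^0),\partial_2 u^0\rangle=0$ leaves only the trilinear contribution $b(\partial_2 u^0,u^0,\partial_2 u^0)$, which must be absorbed into the new dissipation $2\|\partial_1\partial_2 u^0\|_H^2$ coming from $-2\langle\partial_1^2\partial_2 u^0,\partial_2 u^0\rangle$. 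Using anisotropic Sobolev estimates of Agmon type (such as $\|f\|_{L^2_v L^\infty_h}\lesssim\|f\|_H^{1/2}\|\partial_1 f\|_H^{1/2}$, together with the $b$-estimates collected in the appendix), this trilinear form can be dominated by a product in which $\|\partial_1\partial_2 u^0\|_H$ appears with exponent strictly less than $2$, so Young's inequality closes the estimate and Gronwall then delivers the $\tilde H^{0,1}\cap L^2_T\tilde H^{1,1}$ bound.

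The main obstacle is exactly this vertical estimate: since the only dissipation at level $\tilde H^{0,1}$ is $\|\partial_1\partial_2 u^0\|_H^2$ rather than $\|\nabla\partial_2 u^0\|_H^2$, one cannot fall back on the isotropic trick of bounding the nonlinear term by the full $H^1$-norm of $\partial_2 u^0$. The anisotropic product estimates for $b$ are indispensable here, and the quantitative smallness conditions $K_2<2/21$, $\tilde K_2<1/5$, $L_2<1/5$ are precisely what guarantee positive leftover dissipation after every cross-term has been absorbed.
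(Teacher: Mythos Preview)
The paper does not prove this lemma at all: it is stated with the preamble ``Then we have the following estimates from Lemma 3.5, Lemma 4.1, Lemma 4.2 and Lemma 4.4 in \cite{LZZ18}'' and no argument is given. Your sketch is therefore not a comparison target but rather a reconstruction of what the cited reference presumably contains.

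That said, your outline is essentially the right one and would go through. The $H$-estimate via It\^o's formula, the cancellation $b(u^\varepsilon,u^\varepsilon,u^\varepsilon)=0$, BDG and Gronwall is standard; and for the deterministic $\tilde H^{0,1}$-bound your plan of testing $\partial_2$ of the equation against $\partial_2 u^0$ and controlling $b(\partial_2 u^0,u^0,\partial_2 u^0)$ by anisotropic product estimates is exactly what the paper's own Lemma~\ref{estimate for b with partial_2} in the Appendix is designed for (it gives $|\langle \partial_2 u,\partial_2(u\cdot\nabla u)\rangle|\leqslant a\|\partial_1\partial_2 u\|_{L^2}^2+C(1+\|\partial_1 u\|_{L^2}^2)\|\partial_2 u\|_{L^2}^2$, which closes immediately with Gronwall once the basic $L^2_T\tilde H^{1,0}$-bound on $u^0$ is in hand).

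One minor correction to your narrative: the specific thresholds $K_2<2/21$, $\tilde K_2<1/5$, $L_2<1/5$ are not actually what make \emph{this} lemma work. For the first estimate the noise carries a prefactor $\varepsilon$, so any $K_2<\infty$ is absorbable once $\varepsilon_0$ is small; for the second estimate there is no noise at all. Those quantitative constraints enter in the well-posedness theory (Theorem~\ref{ex. and uniq. of solution}), where one needs $L^4(\Omega)$-moments and the BDG constants accumulate, not in the a priori bounds recorded here.
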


We have the following $\tilde{H}^{0,2}$ estimate for $u^0$:

\begin{lemma}\label{wellposedness result for u0 in H02}
Given  $u_0\in \tilde{H}^{0,2}$, the unique solution $u^0$ to  (\ref{determinstic equation after projection}) satisfies the following estimate:
 \begin{equation}\label{apriori estimate H02}
\sup_{t\in[0,T]}\|u^0(t)\|^2_{\tilde{H}^{0,2}}+\int^T_0\| u^0(t)\|^2_{\tilde{H}^{1,2}}dt\leqslant  C.
\end{equation}
\end{lemma}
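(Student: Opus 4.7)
The strategy is to run an anisotropic energy estimate at the $\tilde H^{0,2}$ level for the deterministic equation \eqref{determinstic equation after projection}, combined with the Chemin--Desjardins--Gallagher--Grenier commutator estimate (Lemma \ref{commutator estimates}) and the lower-order bounds of Lemma \ref{estimates from LZZ18}.

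First I apply the vertical Littlewood--Paley block $\Delta^v_j$ to \eqref{determinstic equation after projection}. Since $\Delta^v_j$ is a Fourier multiplier it commutes with $\partial_1^2$ and with $P_H$, and $\Delta^v_j u^0$ remains divergence-free. Pairing in $L^2$ with $\Delta^v_j u^0$ and using the standard cancellation $b(u^0,\Delta^v_j u^0,\Delta^v_j u^0)=0$ yields
\begin{equation*}
\tfrac{1}{2}\tfrac{d}{dt}\|\Delta^v_j u^0\|_{L^2}^2 + \|\partial_1\Delta^v_j u^0\|_{L^2}^2 = -\langle[\Delta^v_j,\,u^0\!\cdot\!\nabla]u^0,\,\Delta^v_j u^0\rangle.
\end{equation*}
Multiplying by $2^{4j}$ and summing over $j\geq -1$ converts the left-hand side into $\tfrac{1}{2}\tfrac{d}{dt}\|u^0\|_{\tilde H^{0,2}}^2+\|\partial_1 u^0\|_{\tilde H^{0,2}}^2$ (up to an equivalent Besov characterization absorbing the low-frequency tail into Lemma \ref{estimates from LZZ18}).

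Next I invoke Lemma \ref{commutator estimates} on the commutator sum. The crucial step is to use $\partial_2 u^0_2=-\partial_1 u^0_1$ (divergence-free) so that the "bad" vertical derivative hitting $u^0$ can be traded for a horizontal one, placing every $\partial_2$ on the $v$-slot. After Bony paraproduct decomposition, this should produce a bound of the form
\begin{equation*}
\sum_{j\geq -1}2^{4j}\bigl|\langle[\Delta^v_j,\,u^0\!\cdot\!\nabla]u^0,\,\Delta^v_j u^0\rangle\bigr|\leq \tfrac{1}{2}\|\partial_1 u^0\|_{\tilde H^{0,2}}^2 + C\bigl(1+\|u^0\|_{\tilde H^{1,1}}^2\bigr)\|u^0\|_{\tilde H^{0,2}}^2.
\end{equation*}
Absorbing the first term into the dissipation on the left gives
\begin{equation*}
\tfrac{d}{dt}\|u^0\|_{\tilde H^{0,2}}^2 + \|\partial_1 u^0\|_{\tilde H^{0,2}}^2 \leq C\bigl(1+\|u^0\|_{\tilde H^{1,1}}^2\bigr)\|u^0\|_{\tilde H^{0,2}}^2.
\end{equation*}
By Lemma \ref{estimates from LZZ18}, $\int_0^T\|u^0(t)\|_{\tilde H^{1,1}}^2\,dt\leq C$, so Gronwall's inequality applied to the above yields the uniform supremum bound $\sup_t\|u^0(t)\|_{\tilde H^{0,2}}^2\leq C\|u_0\|_{\tilde H^{0,2}}^2 e^{CT}$, and integrating in time then controls $\int_0^T\|u^0\|_{\tilde H^{1,2}}^2\,dt$.

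\textbf{Main obstacle.} The delicate point is the commutator estimate. Because the equation carries only horizontal dissipation, every factor involving $\partial_2 u^0$ appearing in the Bony decomposition of $[\Delta^v_j,u^0\!\cdot\!\nabla]u^0$ must be tamed either by the divergence-free trick above (converting it to a $\partial_1$ derivative that can be absorbed into $\|\partial_1 u^0\|_{\tilde H^{0,2}}^2$) or by the $\tilde H^{1,1}$ norm that Lemma \ref{estimates from LZZ18} already bounds in $L^2_t$. Arranging the paraproduct so that in each resulting trilinear term at most one factor carries the top-order norm $\|u^0\|_{\tilde H^{0,2}}$, with another carrying the coercive $\|\partial_1 u^0\|_{\tilde H^{0,2}}$ and the remaining factor being time-integrable, is exactly what the $\tilde H^{0,2}$ variant of Lemma \ref{commutator estimates} from \cite{CDGG00} provides, and is the reason the extra regularity $u_0\in\tilde H^{0,2}$ must be assumed.
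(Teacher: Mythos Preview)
Your proposal is correct and follows essentially the same route as the paper: localize with $\Delta^v_j$, pair with $\Delta^v_j u^0$, invoke the CDGG commutator estimate (Lemma~\ref{commutator estimates}) with $s=2$, $s_0=1$, sum with weights $2^{4j}$, and close by Gronwall against the $L^2_t\tilde H^{1,1}$ bound from Lemma~\ref{estimates from LZZ18}. One point to flag: the output of Lemma~\ref{commutator estimates} involves $\|u^0\|_{\tilde H^{1/4,2}}$ and $\|u^0\|_{\tilde H^{1/4,1}}$ rather than the clean $\|u^0\|_{\tilde H^{0,2}}$ and $\|u^0\|_{\tilde H^{1,1}}$ you write down, so the paper inserts the interpolation $\|u^0\|_{\tilde H^{1/4,s}}\le \|u^0\|_{\tilde H^{0,s}}^{3/4}\|u^0\|_{\tilde H^{1,s}}^{1/4}$ and several applications of Young's inequality to reach a differential inequality of the form $\tfrac{d}{dt}\|u^0\|_{\tilde H^{0,2}}^2+\|\partial_1 u^0\|_{\tilde H^{0,2}}^2\le C(1+\|u^0\|_{\tilde H^{0,1}}^2)(1+\|u^0\|_{\tilde H^{1,1}}^2)\|u^0\|_{\tilde H^{0,2}}^2$; this intermediate step is exactly the ``main obstacle'' you identify, and filling it in is routine once stated.
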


\begin{proof}
Let's start by proving a priori estimates for $u^0$. Applying the operator  $\Delta^v_k$ and using an $L^2$ energy estimate, we have
\begin{align*}
\frac{1}{2}\frac{\text{d}}{\text{d}t}\|u^0_k(t)\|^2_H+\|\partial_1u^0_k(t)\|^2_H\leqslant \langle \Delta^v_k (u^0\cdot \nabla u^0), u^0_k\rangle,
\end{align*}
where we denote by $u^0_k$ the term $\Delta^v_ku^0$. By  Lemma \ref{commutator estimates} with $s=2, s_0=1$ and $u=v=u^0$, there exists $d_k\in l^1$ such that 
\begin{align*}
&\frac{1}{2}\frac{\text{d}}{\text{d}t}\|u^0_k(t)\|^2_H+\|\partial_1u^0_k(t)\|^2_H\\
\leqslant &Cd_k2^{-4k} \left(\|u^0\|_{\tilde{H}^{\frac{1}{4}, 2}}\|u^0\|_{\tilde{H}^{\frac{1}{4},1}}\|\partial_1u^0\|_{\tilde{H}^{0,2}}+\|u^0\|^2_{\tilde{H}^{\frac{1}{4},2}}\|\partial_1u^0\|_{\tilde{H}^{0,1}}\right).
\end{align*}

Now multiplying by $2^{4k}$ and taking sum over $k$ gives
\begin{align*}
\frac{1}{2}\frac{\text{d}}{\text{d}t}\|u^0(t)\|_{\tilde{H}^{0,2}}^2+\|\partial_1u^0(t)\|^2_{\tilde{H}^{0,2}}\leqslant C\left(\|u^0\|_{\tilde{H}^{\frac{1}{4}, 2}}\|u^0\|_{\tilde{H}^{\frac{1}{4},1}}\|\partial_1u^0\|_{\tilde{H}^{0,2}}+\|u^0\|^2_{\tilde{H}^{\frac{1}{4},2}}\|\partial_1u^0\|_{\tilde{H}^{0,1}}\right).
\end{align*}

By interpolation inequalities (see \cite[Theorem 2.80]{BCD11}) we have
\begin{align*}
\|u^0\|_{\tilde{H}^{\frac{1}{4}, s}}\leqslant &\|u^0\|^{\frac{3}{4}}_{\tilde{H}^{0, s}}\|u^0\|^{\frac{1}{4}}_{\tilde{H}^{1,s}},
\end{align*}
where $s=1,2$. Thus  we infer that
\begin{align*}
&\frac{1}{2}\frac{\text{d}}{\text{d}t}\|u^0(t)\|_{\tilde{H}^{0,2}}^2+\|\partial_1u^0(t)\|^2_{\tilde{H}^{0,2}}\\
\leqslant & C\Big(\|u^0\|^{\frac{3}{4}}_{\tilde{H}^{0, 2}}\|u^0\|_{\tilde{H}^{\frac{1}{4},1}}\|\partial_1u^0\|_{\tilde{H}^{0,2}}^{\frac{5}{4}}+\|u^0\|_{\tilde{H}^{0, 2}}\|u^0\|_{\tilde{H}^{\frac{1}{4},1}}\|\partial_1u^0\|_{\tilde{H}^{0,2}}\\
&+\|u^0\|^{\frac{3}{2}}_{\tilde{H}^{0,2}}\|\partial_1u^0\|^{\frac{1}{2}}_{\tilde{H}^{0,2}}\|\partial_1u^0\|_{\tilde{H}^{0,1}}+\|u^0\|^{2}_{\tilde{H}^{0,2}}\|\partial_1u^0\|_{\tilde{H}^{0,1}}\Big)\\
\leqslant &\alpha \|\partial_1 u^0\|^2_{\tilde{H}^{0,2}}+C\|u^0\|^{\frac{8}{3}}_{\tilde{H}^{\frac{1}{4},1}}\|u^0\|^2_{\tilde{H}^{0,2}}+C\|u^0\|^{2}_{\tilde{H}^{\frac{1}{4},1}}\|u^0\|^2_{\tilde{H}^{0,2}}\\
&+C\|\partial_1 u^0\|^{\frac{4}{3}}_{\tilde{H}^{0,1}}\|u^0\|^2_{\tilde{H}^{0,2}}+\|\partial_1 u^0\|_{\tilde{H}^{0,1}}\|u^0\|^2_{\tilde{H}^{0,2}}\\
\leqslant &\alpha \|\partial_1 u^0\|^2_{\tilde{H}^{0,2}}+C\|u^0\|^2_{\tilde{H}^{0,1}}\|u^0\|^{\frac{2}{3}}_{\tilde{H}^{1,1}}\|u^0\|^2_{\tilde{H}^{0,2}}+C\|u^0\|^{\frac{3}{2}}_{\tilde{H}^{0,1}}\|u^0\|^{\frac{1}{2}}_{\tilde{H}^{1,1}}\|u^0\|^2_{\tilde{H}^{0,2}}\\
&+C\|\partial_1 u^0\|^{\frac{4}{3}}_{\tilde{H}^{0,1}}\|u^0\|^2_{\tilde{H}^{0,2}}+\|\partial_1 u^0\|_{\tilde{H}^{0,1}}\|u^0\|^2_{\tilde{H}^{0,2}}\\
\leqslant &\alpha \|\partial_1 u^0\|^2_{\tilde{H}^{0,2}}+C(1+\|u^0\|^2_{\tilde{H}^{0,1}})(1+\|u^0\|^{2}_{\tilde{H}^{1,1}})\|u^0\|^2_{\tilde{H}^{0,2}},
\end{align*}
where we used Young's inequality in the third inequality and $\alpha<\frac{1}{2}$. Then Gronwall's inequality implies that
\begin{align*}
&\sup_{t\in[0,T]}\|u^0(t)\|^2_{\tilde{H}^{0,2}}+\int^T_0\|\partial_1 u^0(t)\|^2_{\tilde{H}^{0,2}}dt\\
\leqslant &\|u_0\|^2_{\tilde{H}^{0,2}}\exp\left(C\sup_{t\in[0,T]}(1+\|u^0(t)\|^2_{\tilde{H}^{0,1}})\int^T_0(1+\|u^0(t)\|^2_{\tilde{H}^{1,1}})dt\right).
\end{align*}

Then by Lemma \ref{estimates from LZZ18}, we get the result.

\end{proof}

The next proposition is about the convergence of $u^\varepsilon$.

\begin{prop}\label{estimate for ue-u0}
Assume (A0)-(A3) hold with $K_2<\frac{2}{21}, \tilde{K}_2<\frac{1}{5}, L_2<\frac{1}{5}$,  then there exists a constant $\varepsilon_0>0$ such that, for any $\varepsilon\in(0,\varepsilon_0)$, we have
\begin{equation}\aligned
E\left(\sup_{t\in[0,T]}\|u^\varepsilon(t)-u^0(t)\|^2_H+\int^T_0\|u^\varepsilon(s)-u^0(s)\|^2_{\tilde{H}^{1,0}}ds\right)\leqslant  C\varepsilon.
\endaligned
\end{equation}
\end{prop}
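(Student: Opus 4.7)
The plan is to set $Z^\varepsilon := u^\varepsilon - u^0$, derive an SPDE for $Z^\varepsilon$, apply It\^o's formula to $\|Z^\varepsilon(t)\|_H^2$, and close a Gronwall estimate using the a priori bounds from Lemma \ref{estimates from LZZ18}. Writing the difference equation and using bilinearity of $B$,
\begin{equation*}
dZ^\varepsilon = \partial_1^2 Z^\varepsilon\,dt - \bigl[B(Z^\varepsilon,Z^\varepsilon)+B(u^0,Z^\varepsilon)+B(Z^\varepsilon,u^0)\bigr]dt + \sqrt{\varepsilon}\,\sigma(t,u^\varepsilon)\,dW(t).
\end{equation*}
It\^o's formula yields
\begin{align*}
\|Z^\varepsilon(t)\|_H^2 + 2\!\int_0^t\!\|\partial_1 Z^\varepsilon\|_H^2\,ds
&= -2\!\int_0^t\! b(Z^\varepsilon,u^0,Z^\varepsilon)\,ds
+\varepsilon\!\int_0^t\!\|\sigma(s,u^\varepsilon)\|_{L_2(l^2,H)}^2\,ds\\
&\quad+ 2\sqrt{\varepsilon}\!\int_0^t\!\langle Z^\varepsilon,\sigma(s,u^\varepsilon)\,dW(s)\rangle,
\end{align*}
where the terms $b(Z^\varepsilon,Z^\varepsilon,Z^\varepsilon)$ and $b(u^0,Z^\varepsilon,Z^\varepsilon)$ vanish by the divergence-free condition.

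The core step is an appropriate anisotropic trilinear estimate for $b(Z^\varepsilon,u^0,Z^\varepsilon)$. Using one of the estimates in the Appendix (the standard anisotropic bound of the form $|b(u,v,w)|\leq C\|u\|_H^{1/2}\|\partial_1 u\|_H^{1/2}\|w\|_H^{1/2}\|\partial_1 w\|_H^{1/2}\|v\|_{\tilde H^{0,1}}$ for divergence-free fields on $\mathbb T^2$), I would obtain
\begin{equation*}
|b(Z^\varepsilon,u^0,Z^\varepsilon)| \leq C\|Z^\varepsilon\|_H\|\partial_1 Z^\varepsilon\|_H\|u^0\|_{\tilde H^{0,1}}
\leq \alpha\|\partial_1 Z^\varepsilon\|_H^2 + C_\alpha\|Z^\varepsilon\|_H^2\|u^0\|_{\tilde H^{0,1}}^2,
\end{equation*}
with $\alpha<1/2$, so the first term is absorbed into the dissipation on the left. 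For the quadratic variation I would invoke (A1) to bound $\|\sigma(s,u^\varepsilon)\|_{L_2(l^2,H)}^2\leq K_0+K_1\|u^\varepsilon\|_H^2+K_2\|\partial_1 u^\varepsilon\|_H^2$, whose time integral has expectation uniformly bounded in $\varepsilon$ by Lemma \ref{estimates from LZZ18}, producing the factor $\varepsilon$ that drives the final bound.

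To promote everything to the supremum in $t$, I would take $\sup_{s\leq t}$ then expectation, and handle the martingale $M_t=2\sqrt{\varepsilon}\int_0^t\langle Z^\varepsilon,\sigma(s,u^\varepsilon)dW\rangle$ via Burkholder--Davis--Gundy:
\begin{equation*}
E\sup_{s\leq t}|M_s|\leq 2\sqrt{\varepsilon}\,E\Bigl(\int_0^t\|Z^\varepsilon\|_H^2\|\sigma(s,u^\varepsilon)\|_{L_2(l^2,H)}^2\,ds\Bigr)^{1/2}
\leq \tfrac{1}{2}E\sup_{s\leq t}\|Z^\varepsilon\|_H^2 + C\varepsilon\,E\!\int_0^t\!\|\sigma\|_{L_2(l^2,H)}^2\,ds,
\end{equation*}
absorbing the $\sup$ term. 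After rearranging, one arrives at
\begin{equation*}
E\Bigl[\sup_{s\leq t}\|Z^\varepsilon\|_H^2+\!\int_0^t\!\|\partial_1 Z^\varepsilon\|_H^2\,ds\Bigr]
\leq C\varepsilon + C\!\int_0^t\!\|u^0\|_{\tilde H^{0,1}}^2\,E\sup_{r\leq s}\|Z^\varepsilon\|_H^2\,ds,
\end{equation*}
and since $\|u^0\|_{\tilde H^{0,1}}^2$ is integrable on $[0,T]$ by Lemma \ref{estimates from LZZ18}, Gronwall's lemma yields the claimed $O(\varepsilon)$ bound.

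The main obstacle is the trilinear estimate: one must pick exactly the right anisotropic Ladyzhenskaya-type inequality so that the transported regularity sits on $u^0$ (which we control in $\tilde H^{0,1}$ uniformly) while only the $L^2$-norm and the horizontal derivative $\partial_1$ of $Z^\varepsilon$ appear, because vertical dissipation is unavailable. Choosing a distribution of norms that instead required $\|Z^\varepsilon\|_{\tilde H^{0,1}}$ or full $H^1$ control would break the argument; the anisotropic bounds collected in the Appendix are tailored precisely to avoid this.
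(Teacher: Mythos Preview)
Your overall strategy is exactly the paper's: write the equation for $Z^\varepsilon=u^\varepsilon-u^0$, apply It\^o's formula, estimate the trilinear term so that only $\|Z^\varepsilon\|_H$ and $\|\partial_1 Z^\varepsilon\|_H$ appear on the difference, handle the martingale by BDG, and close with Gronwall using the a priori bounds on $u^0$ and $u^\varepsilon$ from Lemma~\ref{estimates from LZZ18}.

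The one point that needs correction is the specific trilinear inequality you invoke. The bound $|b(u,v,w)|\leq C\|u\|_H^{1/2}\|\partial_1 u\|_H^{1/2}\|w\|_H^{1/2}\|\partial_1 w\|_H^{1/2}\|v\|_{\tilde H^{0,1}}$ is not among the estimates in the Appendix, and as stated it does not seem to hold: the term $\int u^1\partial_1 v\cdot w$ forces a horizontal derivative on $v$ (or, after integration by parts, full $L^4$ control of $v$), which $\|v\|_{\tilde H^{0,1}}$ alone does not provide. What the paper actually uses is Lemma~\ref{b(u,v,w)}, namely $|b(u,v,w)|\leq C\|u\|_{H^{1,0}}\|v\|_{H^{1,1}}\|w\|_{L^2}$, which with $u=w=Z^\varepsilon$, $v=u^0$ and Young's inequality gives
\[
|b(Z^\varepsilon,u^0,Z^\varepsilon)|\leq \tfrac14\|\partial_1 Z^\varepsilon\|_H^2+C\bigl(1+\|u^0\|_{\tilde H^{1,1}}^2\bigr)\|Z^\varepsilon\|_H^2.
\]
Thus the Gronwall weight is $1+\|u^0\|_{\tilde H^{1,1}}^2$ rather than $\|u^0\|_{\tilde H^{0,1}}^2$; this is harmless because $\int_0^T\|u^0(s)\|_{\tilde H^{1,1}}^2\,ds<\infty$ is also supplied by Lemma~\ref{estimates from LZZ18}. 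With this adjustment your argument coincides with the paper's proof.
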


\begin{proof}
Applying It\^o's formula to $\|u^\varepsilon(t)-u^0(t)\|^2_H$, we have
\begin{align*}
&\|u^\varepsilon(t)-u^0(t)\|^2_H\\
=&-2\int^t_0\|\partial_1(u^\varepsilon-u^0)(s)\|^2_Hds-2\int^t_0\langle u^\varepsilon(s)-u^0(s), B(u^\varepsilon(s))-B(u^0(s))\rangle ds\\
&+2\sqrt{\varepsilon}\int^t_0\langle u^\varepsilon(s)-u^0(s), \sigma(s,u^\varepsilon(s))dW(s)\rangle +\varepsilon\int^t_0 \|\sigma(s,u^\varepsilon(s))\|^2_{L_2(l^2,H)}ds.
\end{align*}

By Lemma \ref{b(u,v,w)} we have
\begin{align*}
&|\langle u^\varepsilon(s)-u^0(s), B(u^\varepsilon(s))-B(u^0(s))\rangle|\\
=&|b(u^\varepsilon,u^\varepsilon,u^\varepsilon-u^0)-b(u^0, u^0,u^\varepsilon-u^0)|\\
=& |b(u^\varepsilon-u^0,u^0,u^\varepsilon-u^0)|\\
\leqslant &\frac{1}{4} \|\partial_1(u^\varepsilon-u^0)\|^2_H+C(1+\|u^0\|^2_{\tilde{H}^{1,1}})\|u^\varepsilon-u^0\|^2_H.
\end{align*}
By the Burkh\"older-Davis-Gundy's inequality (see \cite[Appendix D]{LR15}), we have
\begin{align*}
&2\sqrt{\varepsilon} E\left(\sup_{s\in[0,t]}\left|\int^t_0\langle u^\varepsilon(s)-u^0(s), \sigma(s,u^\varepsilon(s))dW(s)\rangle\right|\right)\\
\leqslant& 6\sqrt{\varepsilon} E\left(\int^t_0\|u^\varepsilon(s)-u^0(s)\|^2_H\|\sigma(s,u^\varepsilon(s)\|^2_{L_2(l^2,H)}ds\right)^\frac{1}{2}\\
\leqslant& 6\sqrt{\varepsilon} E\left(\sup_{s\in[0,t]}\|u^\varepsilon(s)-u^0(s)\|^2_H\int^t_0(K_0+K_1\|u^\varepsilon(s)\|^2_H+K_2\|\partial_1u^\varepsilon(s)\|^2_H)ds\right)^\frac{1}{2}\\
\leqslant & \frac{1}{2} E\left(\sup_{s\in[0,t]}\|u^\varepsilon(s)-u^0(s)\|^2_H\right)+C\varepsilon E\left(\int^t_0(1+\|u^\varepsilon(s)\|^2_H+\|\partial_1u^\varepsilon(s)\|^2_H)ds\right),
\end{align*}
where we used (A1) in the last second line. Thus by above estimates and (A1) we deduce that
\begin{align*}
&E\left(\sup_{s\in[0,t]}\|u^\varepsilon(s)-u^0(s)\|^2_H+\int^t_0 \|u^\varepsilon(s)-u^0(s)\|^2_{\tilde{H}^{1,0}}ds\right)\\
\leqslant & C\int^t_0(1+\|u^0(s)\|^2_{\tilde{H}^{1,1}})E(\sup_{l\in[0,s]}\|u^\varepsilon(l)-u^0(l)\|^2_H)ds\\
&+C\varepsilon E\left(\int^t_0(1+\|u^\varepsilon(s)\|^2_H+\|\partial_1u^\varepsilon(s)\|^2_H)ds\right).
\end{align*}
Then Gronwall's inequality and Lemma \ref{estimates from LZZ18} imply that
\begin{align*}
&E\left(\sup_{s\in[0,T]}\|u^\varepsilon(s)-u^0(s)\|^2_H+\int^T_0 \|u^\varepsilon(s)-u^0(s)\|^2_{\tilde{H}^{1,0}}ds\right)\\
\leqslant & C\varepsilon E\left(\int^T_0(1+\|u^\varepsilon(s)\|^2_H+\|\partial_1u^\varepsilon(s)\|^2_H)ds\right) e^{C\int^T_0(1+\|u^0(s)\|^2_{\tilde{H}^{1,1}})ds}\\
\leqslant & C\varepsilon.
\end{align*}

\end{proof}

Let $V^0$ be the solution to the following SPDE:
\begin{equation}\label{limit eq. of CLT}\aligned
dV^0(t)&=\partial_1^2V^0(t)dt-B(V^0(t),u^0(t))dt-B(u^0(t),V^0(t))dt+\sigma(t,u^0(t))dW(t),\\
V^0(0)&=0.
\endaligned
\end{equation}

\begin{lemma}\label{wellposedness limit eq. of CLT}
Assume that $u^0$ satisfies \eqref{apriori estimate H02}. Then under the assumptions (A0), (A1), (A2), equation  \eqref{limit eq. of CLT} has a unique probabilistically strong solution $$V^0\in L^\infty([0,T], \tilde{H}^{0,1})\cap L^2([0, T], \tilde{H}^{1,1})\cap C([0,T], H^{-1}).$$
\end{lemma}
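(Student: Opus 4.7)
Since equation \eqref{limit eq. of CLT} is linear in $V^0$ once $u^0$ is regarded as a given deterministic coefficient, my plan is to run a standard Galerkin approximation and close the required a priori estimates in $L^\infty([0,T];\tilde H^{0,1})\cap L^2([0,T];\tilde H^{1,1})$ using the bilinear estimates from the Appendix together with the $\tilde H^{0,2}$ regularity of $u^0$ supplied by Lemma \ref{wellposedness result for u0 in H02}. Project the equation onto a finite-dimensional space spanned by eigenfunctions of the Stokes operator; the resulting finite-dimensional linear SDE trivially has a unique strong solution $V^0_n$ for each $n$, and it suffices to derive uniform-in-$n$ bounds and pass to the limit.

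For the $H$-energy estimate, apply It\^o's formula to $\|V^0_n(t)\|_H^2$. The term $\langle B(u^0,V^0_n),V^0_n\rangle=b(u^0,V^0_n,V^0_n)$ vanishes, while the remaining nonlinear contribution $b(V^0_n,u^0,V^0_n)$ is handled by exactly the same bilinear estimate used in the proof of Proposition \ref{estimate for ue-u0}, giving an absorbable $\tfrac{1}{4}\|\partial_1V^0_n\|_H^2$ and a factor $C(1+\|u^0\|^2_{\tilde H^{1,1}})\|V^0_n\|_H^2$ that is Gronwall-integrable thanks to Lemma \ref{estimates from LZZ18}. The It\^o correction and the martingale term are controlled via (A1) and the Burkh\"older--Davis--Gundy inequality exactly as in Proposition \ref{estimate for ue-u0}, yielding a uniform bound on $E\sup_{t\leq T}\|V^0_n(t)\|_H^2+E\int_0^T\|\partial_1V^0_n(s)\|_H^2\,ds$.

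For the $\tilde H^{0,1}$ bound, apply $\Delta^v_k$ to the Galerkin equation and perform an It\^o energy estimate on $\|\Delta^v_k V^0_n\|_H^2$, mimicking the argument of Lemma \ref{wellposedness result for u0 in H02}. Both $\Delta^v_k B(u^0,V^0_n)$ and $\Delta^v_k B(V^0_n,u^0)$ will be treated by the commutator estimate of Lemma \ref{commutator estimates} (with $s=1$ and $s_0$ chosen so that one factor carries $\tilde H^{0,2}$-regularity of $u^0$ and the other a power of $V^0_n$ in $\tilde H^{0,1}\cap\tilde H^{1,1}$). After multiplying by $2^{2k}$, summing in $k$, and using interpolation plus Young's inequality, the $\|\partial_1 V^0_n\|_{\tilde H^{0,1}}^2$ part is absorbed on the left and the remaining prefactor $C(1+\|u^0\|^2_{\tilde H^{0,2}})(1+\|u^0\|^2_{\tilde H^{1,2}})$ is integrable in time by Lemma \ref{wellposedness result for u0 in H02}; (A2) furnishes the $\tilde H^{0,1}$ diffusion bound. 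Gronwall then gives a uniform bound in $L^2(\Omega;L^\infty([0,T];\tilde H^{0,1})\cap L^2([0,T];\tilde H^{1,1}))$.

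The passage to the limit is standard: weak-$\ast$ and weak compactness deliver a limit $V^0$ in the expected spaces, and because the drift is \emph{bilinear} in $(V^0,u^0)$ with $u^0$ fixed and smooth, the convergence of $B(V^0_n,u^0)$ and $B(u^0,V^0_n)$ follows from the weak convergence of $V^0_n$ alone, so no Skorokhod-type argument for the nonlinearity is needed; continuity in $H^{-1}$ follows from the equation. Uniqueness is immediate from linearity: the difference of two solutions satisfies the homogeneous equation with zero initial data, and the $H$-estimate above forces it to vanish. The main obstacle is the $\tilde H^{0,1}$ step, where the anisotropic dissipation offers no help in the vertical direction and one must carefully track the commutator expansion so that every high-vertical-regularity factor falls on $u^0$, whose $\tilde H^{0,2}$ control is precisely what Lemma \ref{wellposedness result for u0 in H02} was designed to provide.
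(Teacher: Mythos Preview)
Your overall strategy---Galerkin approximation, energy estimates closed by the $\tilde H^{0,2}$ regularity of $u^0$, then compactness and linearity for uniqueness---matches the paper's. The genuine difference is in how you handle the $\tilde H^{0,1}$ energy estimate. The paper does \emph{not} invoke the Littlewood--Paley commutator estimate (Lemma~\ref{commutator estimates}) here; it applies It\^o's formula directly to $\|V_n\|^2_{\tilde H^{0,1}}$, expands the inner product $\langle B(V_n,u^0)+B(u^0,V_n),V_n\rangle_{\tilde H^{0,1}}$ into four ordinary trilinear terms involving $\partial_2$, namely $b(V_n,u^0,V_n)$, $b(\partial_2 V_n,u^0,\partial_2 V_n)$, $b(V_n,\partial_2 u^0,\partial_2 V_n)$ and $b(\partial_2 u^0,V_n,\partial_2 V_n)$, and bounds each with the elementary estimate of Lemma~\ref{b(u,v,w)}. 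This is where $\|u^0\|_{\tilde H^{1,2}}$ appears and is controlled by \eqref{apriori estimate H02}. Your route via Lemma~\ref{commutator estimates} is heavier and, as stated, not quite well-posed: that lemma is tailored to expressions of the form $\langle \Delta^v_k(u\cdot\nabla w),\Delta^v_k w\rangle$ with the \emph{same} $w$ in both slots (the integration-by-parts step \eqref{commutator integration by part} uses this crucially), so it does not directly cover $\langle \Delta^v_k(V_n\cdot\nabla u^0),\Delta^v_k V_n\rangle$. You would need an additional paraproduct argument for that cross term, whereas the paper's elementary expansion avoids the issue entirely.

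Two smaller remarks. First, the separate $H$-estimate you outline is fine but unnecessary; the paper goes straight to $\tilde H^{0,1}$. Second, the paper does pass to the limit by citing the stochastic compactness argument of \cite[Theorem~4.1]{LZZ18} (with a Skorokhod change of basis), merely observing that linearity makes the drift convergence trivial; your remark that Skorokhod can be bypassed altogether because $\sigma(t,u^0)$ is independent of $V_n$ is a valid simplification that the paper does not exploit.
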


\begin{proof}
The proof follows a very similar Galerkin approximation argument as in \cite[Section 4]{LZZ18}, we show some key steps here.

Let $\{e_k, k\geqslant 1\}$ be an orthonormal basis of $H$ whose elements belong to $H^2$ and orthogonal in $\tilde{H}^{0,1}$ and $\tilde{H}^{1,0}$. Let $\mathcal{H}_n=\text{span} \{e_1, \dots, e_n\}$ and let $P_n$ denote the orthogonal projection from $H$ to $\mathcal{H}_n$. For $l^2-$cylindrical Wiener process $W(t)$, let $W_n(t)=\Pi_nW(t):=\sum_{j=1}^n\psi_j\beta_j(t)$, where $\beta_j$ is a sequence of independent Brownian motions and $\psi_j$ is an orthonormal basis of  $l^2$. Set $F: H^1\rightarrow H^{-1}$ with $F(u)=-B(u,u^0)-B(u^0, u)+\partial_1^2u$.

Fix $n\geqslant 1$ and for $v\in \mathcal{H}_n$ consider the following equation on $\mathcal{H}_n$:
\begin{equation}\label{Galerkin eq.}\aligned
d\langle V_n(t), v\rangle =&\langle P_nF(V_n), v\rangle dt+\langle P_n\sigma(t,u^0(t))dW_n(t), v\rangle\\
V_n(0)=& P_nu_0.
\endaligned
\end{equation}

Then by \cite[Theorem 3.1.1]{LR15} there exists unique global strong solution $V_n$ to \eqref{Galerkin eq.}. Moreover, $V_n\in C([0,T], \mathcal{H}_n)$.

We first prove  a priori estimates. 
Applying It\^o's formula to $\|V_n\|^2_{\tilde{H}^{0,1}}$, we have
\begin{align*}
\|V_n(t)\|^2_{\tilde{H}^{0,1}}+2\int^t_0\|\partial_1V_n(s)\|^2_{\tilde{H}^{0,1}}ds=&\|P_nu_0\|^2_{\tilde{H}^{0,1}}-2\int^t_0 \langle B(V_n, u^0)+ B(u^0,V_n), V_n \rangle_{\tilde{H}^{0,1}} ds\\
&+2\int^t_0\langle \sigma(s,u^0(s))dW_n(s), V_n(s) \rangle_{\tilde{H}^{0,1}} \\
&+\int^t_0 \|P_n\sigma(s, u^0(s))\Pi_n\|^2_{L_2(l^2, \tilde{H}^{0,1})}ds.
\end{align*}

By Lemma \ref{b(u,v,w)} and Young's inequality, we have
\begin{align*}
&|\langle B(V_n, u^0)+ B(u^0,V_n), V_n \rangle_{\tilde{H}^{0,1}}|\\
\leqslant & |b(V_n,u^0, V_n)|+|b(\partial_2V_n,u^0,\partial_2V_n)|+|b(V_n, \partial_2u^0, \partial_2V_n)|+|b(\partial_2u^0,V_n,\partial_2V_n)|\\
\leqslant &C\Big(\|V_n\|_{\tilde{H}^{1,0}}\|u^0\|_{\tilde{H}^{1,1}}\|V_n\|_{H}+\|\partial_2V_n\|_{\tilde{H}^{1,0}}\|u^0\|_{\tilde{H}^{1,1}}\|\partial_2V_n\|_{H}\\
&+\|V_n\|_{\tilde{H}^{1,0}}\| \partial_2u^0\|_{\tilde{H}^{1,1}}\|\partial_2V_n\|_{H}+\|\partial_2u^0\|_{\tilde{H}^{1,0}}\|V_n\|_{\tilde{H}^{1,1}}\|\partial_2V_n\|_H  \Big)\\
\leqslant & \alpha \|V_n\|^2_{\tilde{H}^{1,1}}+C\|u^0\|^2_{\tilde{H}^{1,2}}\|V_n\|^2_{\tilde{H}^{0,1}},
\end{align*}
where $\alpha<\frac{1}{2}$.

The growth condition and Lemma \ref{estimates from LZZ18} imply that
\begin{align*}
\int^t_0 \|P_n\sigma(s, u^0(s))\Pi_n\|^2_{L_2(l^2, \tilde{H}^{0,1})}ds\leqslant C\int^t_0(1+\|u^0\|^2_{\tilde{H}^{1,1}})ds\leqslant C.
\end{align*}

Similarly, by the Burkh\"older-Davis-Gundy's inequality, we have
\begin{align*}
&2 E\left(\sup_{s\in[0,t]}\left|\int^t_0\langle \sigma(s,u^0(s))dW_n(s), V_n(s) \rangle_{\tilde{H}^{0,1}} \right|\right)\\
\leqslant& 6 E\left(\int^t_0 \|P_n\sigma(s, u^0(s))\Pi_n\|^2_{L_2(l^2, \tilde{H}^{0,1})}\|V_n(s)\|^2_{\tilde{H}^{0,1}}ds\right)^\frac{1}{2}\\
\leqslant &\beta  E\left(\sup_{s\in[0,t]}\|V^0(s)\|^2_{\tilde{H}^{0,1}}\right)+C\int^t_0(1+\|u^0\|^2_{\tilde{H}^{1,1}})ds\\
\leqslant &\beta  E\left(\sup_{s\in[0,t]}\|V^0(s)\|^2_{\tilde{H}^{0,1}}\right)+C,
\end{align*}
where $\beta<\frac{1}{2}$.

Then we get
\begin{align*}
&E\left(\sup_{s\in[0,t]}\|V_n(s)\|^2_{\tilde{H}^{0,1}}\right)+E\int^t_0\|V_n(s)\|^2_{\tilde{H}^{1,1}}ds\\
\leqslant& C+C\int^t_0\left(\|u^0\|^2_{\tilde{H}^{1,2}}+1\right)E\left(\sup_{r\in[0,s]}\|V_n(r)\|^2_{\tilde{H}^{0,1}}\right)ds.
\end{align*}

Then by Gronwall's inequality and \eqref{apriori estimate H02}, we have
\begin{equation}\label{H01 estimate for V0}\aligned
E\left(\sup_{s\in[0,t]}\|V_n(s)\|^2_{\tilde{H}^{0,1}}\right)+E\int^t_0\|V_n(s)\|^2_{\tilde{H}^{1,1}}ds
\leqslant & C\exp\left(C\int^t_0\left(\|u^0\|^2_{\tilde{H}^{0,2}}+1\right)ds\right)\leqslant C.
\endaligned
\end{equation}

The rest part  of the existence proof is very similar as in the proof of \cite[Theorem 4.1]{LZZ18}, we only need to point out that the convergence of $F(V_n)$ holds as $n\rightarrow\infty$: 
From the proof we could obtain that there exists another stochastic basis $(\tilde{\Omega}, \tilde{\mathcal{F}},\tilde{P})$ and random variables $\tilde{V}_n$ with same law of $V_n$ such that $\tilde{V}_n\rightarrow \tilde{V}$ in $C([0,T],H^{-1})\cap L^2([0,T],H)$, $\tilde{P}$-a.s. (in the sense of subsequence).  Fix $l\in C^\infty(\mathbb{T}^2)$ with $\text{div}l=0$. Since $F(V_n)$ is actually linear term, the convergence of $\tilde{V}_n$ in $L^2([0,T],H)$ implies that
$$\int^t_0\langle F(\tilde{V}_n), P_n l \rangle ds\rightarrow \int^t_0\langle F(\tilde{V}), l \rangle  ds, \tilde{P}\text{-a.s.}$$

For uniqueness, assume $V^0_1, V^0_2$ are two solutions  in $L^\infty([0,T], \tilde{H}^{0,1})\cap L^2([0, T], \tilde{H}^{1,1})\cap C([0,T], H^{-1})$ with the same initial condition, let $w=V_1-V_2$, then $w(0)=0$ and $w$ satisfies
\begin{align*}
dw(t)=\partial_1^2 w(t)dt-B(w(t),u^0(t))dt-B(u^0(t), w(t))dt.
\end{align*}

Then similarly as the proof of the uniqueness for the deterministic Navier-Stokes equation with anisotropic viscosity, we know that $w=0$.

\end{proof}

\begin{remark}
Note here we do not need assumption (A3) and $L^4(\Omega)$ estimate of $V_n$ since the drift term $\sigma(t,u^0)$ does not depend on $V_n$.
\end{remark}

The main theorem of this section is the following central limit theorem.

\begin{Thm}\label{CLT}
Assume (A0)-(A3) hold with $K_2<\frac{2}{21}, \tilde{K}_2<\frac{1}{5}, L_2<\frac{1}{5}$, then for $u_0\in\tilde{H}^{0,2}$ we have
\begin{align*}
\lim_{\varepsilon\rightarrow 0}E\left(\sup_{t\in[0,T]} \|\frac{u^\varepsilon(t)-u^0(t)}{\sqrt{\varepsilon}}-V^0(t)\|_H^2+\int^T_0\|\frac{u^\varepsilon(t)-u^0(t)}{\sqrt{\varepsilon}}-V^0(t)\|^2_{\tilde{H}^{1,0}}dt \right)=0
\end{align*}
\end{Thm}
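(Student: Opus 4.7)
The plan is to set $Y^\varepsilon := (u^\varepsilon - u^0)/\sqrt{\varepsilon} - V^0$ and show that $Y^\varepsilon \to 0$ in $L^2(\Omega; C([0,T];H) \cap L^2([0,T]; \tilde{H}^{1,0}))$. First I would subtract \eqref{determinstic equation after projection} from \eqref{equation after projection with varepsilon}, divide by $\sqrt{\varepsilon}$, then subtract \eqref{limit eq. of CLT}. Using the bilinearity identity
\[
B(u^\varepsilon) - B(u^0) = B(u^\varepsilon - u^0, u^\varepsilon) + B(u^0, u^\varepsilon - u^0)
\]
together with the substitution $u^\varepsilon - u^0 = \sqrt{\varepsilon}(Y^\varepsilon + V^0)$, the resulting SPDE for $Y^\varepsilon$ reads
\begin{align*}
dY^\varepsilon &= \partial_1^2 Y^\varepsilon\, dt - B(Y^\varepsilon, u^\varepsilon)\, dt - B(u^0, Y^\varepsilon)\, dt \\
&\quad - \sqrt{\varepsilon}\, B(V^0, Y^\varepsilon)\, dt - \sqrt{\varepsilon}\, B(V^0, V^0)\, dt + [\sigma(s,u^\varepsilon) - \sigma(s,u^0)]\, dW,
\end{align*}
with initial datum $Y^\varepsilon(0) = 0$.

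Next I apply It\^o's formula to $\|Y^\varepsilon\|_H^2$. By the antisymmetry $b(u,v,v) = 0$, the contributions $b(u^0, Y^\varepsilon, Y^\varepsilon)$ and $\sqrt{\varepsilon}\, b(V^0, Y^\varepsilon, Y^\varepsilon)$ vanish, while the remaining trilinear term, upon writing $u^\varepsilon = u^0 + \sqrt{\varepsilon}(V^0 + Y^\varepsilon)$ and using $b(Y^\varepsilon, Y^\varepsilon, Y^\varepsilon) = 0$, becomes
\[
b(Y^\varepsilon, u^\varepsilon, Y^\varepsilon) = b(Y^\varepsilon, u^0, Y^\varepsilon) + \sqrt{\varepsilon}\, b(Y^\varepsilon, V^0, Y^\varepsilon).
\]
Each piece is bounded exactly as in Proposition \ref{estimate for ue-u0} by $\tfrac14\|\partial_1 Y^\varepsilon\|_H^2 + C(1 + \|\cdot\|_{\tilde{H}^{1,1}}^2)\|Y^\varepsilon\|_H^2$ with $\cdot$ equal to $u^0$ or $V^0$, respectively; the $\sqrt{\varepsilon}$ factor on the $V^0$ contribution makes it a small perturbation of the Gronwall coefficient. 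The source $\sqrt{\varepsilon}\, b(V^0, V^0, Y^\varepsilon) = -\sqrt{\varepsilon}\, b(V^0, Y^\varepsilon, V^0)$ is handled by an anisotropic trilinear estimate followed by Young's inequality, yielding $\tfrac14\|\partial_1 Y^\varepsilon\|_H^2 + C\varepsilon\, \phi(V^0)$ with $\phi(V^0) \in L^1([0,T])$ having finite expectation by Lemma \ref{wellposedness limit eq. of CLT}. The It\^o correction $\|\sigma(s,u^\varepsilon) - \sigma(s,u^0)\|_{L_2(l^2,H)}^2$ is $O(\varepsilon)$ in expectation by (A3) and Proposition \ref{estimate for ue-u0}, and the stochastic integral is absorbed into $\tfrac12 E\sup_s \|Y^\varepsilon\|_H^2$ plus a $C\varepsilon$ correction via Burkholder-Davis-Gundy.

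After taking supremum in $t$ and expectation, these steps yield an integral inequality of the form
\[
E\sup_{s\le t}\|Y^\varepsilon(s)\|_H^2 + E\int_0^t \|Y^\varepsilon(s)\|_{\tilde{H}^{1,0}}^2\, ds \leq C\varepsilon + \int_0^t g(s)\, E\sup_{r\le s}\|Y^\varepsilon(r)\|_H^2\, ds,
\]
with a nonnegative $g$ whose time integral over $[0,T]$ is finite (by Lemma \ref{estimates from LZZ18}, Lemma \ref{wellposedness result for u0 in H02} and Lemma \ref{wellposedness limit eq. of CLT}); Gronwall's inequality then gives the claimed convergence with $O(\varepsilon)$ rate. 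The principal technical obstacle is controlling the $\sqrt{\varepsilon}$-source $B(V^0, V^0)$ in a way that leaves the Gronwall coefficient integrable despite the partial dissipation $\partial_1^2$ supplying only the $\tilde{H}^{1,0}$-direction; this forces the use of the $L^2_t\tilde{H}^{1,1}$-regularity of $V^0$ from Lemma \ref{wellposedness limit eq. of CLT}, which itself relies on the $\tilde{H}^{0,2}$ control of $u^0$ from Lemma \ref{wellposedness result for u0 in H02}, and hence on the hypothesis $u_0 \in \tilde{H}^{0,2}$. A pathwise Gronwall argument (or moment bound on $V^0$) may be needed to close the estimate when the coefficient $g$ depends on the random quantity $\|V^0\|_{\tilde{H}^{1,1}}^2$.
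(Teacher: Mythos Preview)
Your energy-estimate strategy on $Y^\varepsilon=V^\varepsilon-V^0$ is exactly what the paper does, but your decomposition of the bilinear difference is not the one the paper chooses, and this creates a real difficulty you only half-acknowledge at the end.

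The paper writes $B(V^\varepsilon,u^\varepsilon)-B(V^0,u^0)=B(V^\varepsilon-V^0,u^0)+B(V^\varepsilon,u^\varepsilon-u^0)$ and, after pairing with $V^\varepsilon-V^0$, obtains only two trilinear terms: $b(V^\varepsilon-V^0,u^0,V^\varepsilon-V^0)$ (a Gronwall term with \emph{deterministic} coefficient $\|u^0\|_{\tilde H^{1,1}}^2$) and $\sqrt{\varepsilon}\,b(V^\varepsilon,V^0,V^\varepsilon)$ (a pure \emph{source} term, no $Y^\varepsilon$ left). The latter is bounded by $\sqrt{\varepsilon}\,C(\|V^\varepsilon\|_H^2\|V^\varepsilon\|_{\tilde H^{1,0}}^2+\|V^0\|_{\tilde H^{1,1}}^2)$, and to control this in expectation the paper proves a separate lemma (Lemma~\ref{estimate for Ve}) establishing $\sup_\varepsilon E\int_0^T\|V^\varepsilon\|_H^2\|V^\varepsilon\|_{\tilde H^{1,0}}^2\,ds<\infty$ via an It\^o estimate on $\|V^\varepsilon\|_H^4$. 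With that ingredient the Gronwall coefficient stays deterministic and the argument closes with rate $O(\sqrt{\varepsilon})$, not $O(\varepsilon)$.

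Your expansion instead produces $\sqrt{\varepsilon}\,b(Y^\varepsilon,V^0,Y^\varepsilon)$, which forces the random quantity $\sqrt{\varepsilon}\|V^0\|_{\tilde H^{1,1}}^2$ into the Gronwall coefficient. A straight ``take $E\sup$ then Gronwall'' does not close here, because you cannot pull a random factor through $E\sup_{r\le s}\|Y^\varepsilon(r)\|_H^2$. Fixing this requires either higher moment bounds on $V^0$ combined with a stopping-time localisation, or a pathwise Gronwall (which then has to cope with the martingale term before taking expectation). Both are doable---the equation for $V^0$ is linear with additive noise, so arbitrary moments are available---but neither is as clean as the paper's route, and you have not carried either out. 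Also, your claimed bound $\tfrac14\|\partial_1 Y^\varepsilon\|_H^2+C\varepsilon\,\phi(V^0)$ for the source $\sqrt{\varepsilon}\,b(V^0,V^0,Y^\varepsilon)$ is not what Lemma~\ref{b(u,v,w)} gives: that lemma yields $C\|V^0\|_{\tilde H^{1,0}}\|V^0\|_{\tilde H^{1,1}}\|Y^\varepsilon\|_H$, which after Young produces a $\|Y^\varepsilon\|_H^2$ term and a source $C\varepsilon\|V^0\|_{\tilde H^{1,0}}^2\|V^0\|_{\tilde H^{1,1}}^2$ whose finite expectation again needs fourth-moment information on $V^0$ not supplied by Lemma~\ref{wellposedness limit eq. of CLT}.

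In short: the skeleton of your argument matches the paper, but the paper's decomposition is chosen precisely so that the Gronwall weight is deterministic and all randomness sits in a source handled by the auxiliary Lemma~\ref{estimate for Ve}. Your alternative decomposition trades that lemma for moment and localisation issues on $V^0$ that you would still need to resolve.
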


\begin{proof}
Let $V^\varepsilon=\frac{u^\varepsilon(t)-u^0(t)}{\sqrt{\varepsilon}}$. Then we have
\begin{equation}\label{eq. for Vvarepsilon}\aligned
dV^\varepsilon(t)&=\partial_1^2V^\varepsilon(t)dt-B(V^\varepsilon(t),u^\varepsilon(t))dt-B(u^0(t),V^\varepsilon(t))dt+\sigma(t,u^\varepsilon(t))dW(t),\\
V^\varepsilon(0)&=0,
\endaligned
\end{equation}
and
\begin{align*}
d(V^\varepsilon-V^0)=&\partial_1^2(V^\varepsilon- V^0)dt-(B(V^\varepsilon,u^\varepsilon)-B(V^0,u^0))dt\\
&-B(u^0,V^\varepsilon- V^0)dt+(\sigma(t,u^\varepsilon)-\sigma(t,u^0))dW(t).
\end{align*}

By It\^o's formula, we have
\begin{align*}
&\|V^\varepsilon(t)-V^0(t)\|^2_H+2\int^t_0\|\partial_1(V^\varepsilon(s)-V^0(s))\|^2_Hds\\
=&-2\int^t_0\langle B(V^\varepsilon,u^\varepsilon)-B(V^0,u^0), V^\varepsilon-V^0 \rangle ds\\
&+2\int^t_0\langle (\sigma(s,u^\varepsilon)-\sigma(s,u^0))dW(s), V^\varepsilon(s)-V^0(s)\rangle\\
&+\int^t_0\|\sigma(s,u^\varepsilon)-\sigma(s,u^0)\|^2_{L_2(l^2,H)}ds\\
\leqslant &2\int^t_0|b(V^\varepsilon-V^0, u^0, V^\varepsilon-V^0)|ds\\
&+2\int^t_0|b(V^\varepsilon, u^\varepsilon-u^0, V^\varepsilon-V^0)|ds\\
&+2|\int^t_0\langle (\sigma(s,u^\varepsilon)-\sigma(s,u^0))dW(s), V^\varepsilon(s)-V^0(s)\rangle|\\
&+\int^t_0\|\sigma(s,u^\varepsilon)-\sigma(s,u^0)\|^2_{L_2(l^2,H)}ds\\
=:&I_1+I_2+I_3+I_4.
\end{align*}

Taking the supremum and the expectation, we obtain that
\begin{align*}
&E\left(\sup_{s\in[0,t]}\|V^\varepsilon(s)-V^0(s)\|^2_H+2\int^t_0\|\partial_1(V^\varepsilon(s)-V^0(s))\|^2_{H}ds\right)\\
\leqslant & E(I_1(t)+I_2(t)+\sup_{s\in[0,t]}I_3(s)+I_4(t)).
\end{align*}

By Lemma \ref{b(u,v,w)}, we have
\begin{align*}
EI_1(t)\leqslant & 2E\int^t_0\left(\frac{1}{4}\|V^\varepsilon-V^0\|^2_{\tilde{H}^{1,0}}+C\|u^0\|^2_{\tilde{H}^{1,1}}\|V^\varepsilon-V^0\|^2_H\right)ds.
\end{align*}

By Lemma \ref{b(u,v,w)}, we have
\begin{align*}
EI_2(t)=&2\sqrt{\varepsilon}E\int^t_0|b(V^\varepsilon, V^\varepsilon, V^\varepsilon-V^0)|ds\\
=&2\sqrt{\varepsilon}E\int^t_0|b(V^\varepsilon, V^\varepsilon, V^0)|ds=2\sqrt{\varepsilon}E\int^t_0|b(V^\varepsilon, V^0, V^\varepsilon)|ds\\
\leqslant &\sqrt{\varepsilon}CE\int^t_0(\|V^\varepsilon\|^2_{\tilde{H}^{1,0}}\|V^\varepsilon\|^2_H+\|V^0\|^2_{\tilde{H}^{1,1}})ds .
\end{align*}

By the Burkh\"older-Davis-Gundy inequality and (A3), we have
\begin{align*}
E\left(\sup_{s\in[0,t]}I_3(s)\right)\leqslant & 6E\left(\int^t_0\|\sigma(s,u^\varepsilon)-\sigma(s,u^0)\|^2_{L_2(l^2,H)}\|V^\varepsilon-V^0\|^2_Hds\right)^\frac{1}{2}\\
\leqslant & 6E\left(\sup_{s\in[0,t]}\|V^\varepsilon-V^0\|^2_H\int^t_0\|\sigma(s,u^\varepsilon)-\sigma(s,u^0)\|^2_{L_2(l^2,H)}ds\right)^\frac{1}{2}\\
\leqslant &\frac{1}{2}E\left(\sup_{s\in[0,t]}\|V^\varepsilon-V^0\|^2_H\right)+CE\left(\int^t_0\|u^\varepsilon-u^0)\|^2_H+\|\partial_1(u^\varepsilon-u^0)\|^2_Hds\right).
\end{align*}

By (A1), we have
\begin{align*}
EI_4(t)\leqslant CE\left(\int^t_0\|u^\varepsilon-u^0\|^2_H+\|\partial_1(u^\varepsilon-u^0)\|^2_Hds\right).
\end{align*}

The above estimates together with Lemma \ref{estimate for ue-u0} and Lemma \ref{estimate for Ve} below induce that
\begin{align*}
&E\left(\sup_{s\in[0,t]}\|V^\varepsilon(s)-V^0(s)\|^2_H+\int^t_0\|V^\varepsilon(s)-V^0(s)\|^2_{\tilde{H}^{1,0}}ds\right)\\
\leqslant& C E\int^t_0\left(\|u^0(s)\|^2_{\tilde{H}^{1,1}}\sup_{l\in[0,s]}\|V^\varepsilon(l)-V^0(l)\|^2_H\right)ds\\
&+\sqrt{\varepsilon}CE\int^t_0(\|V^\varepsilon\|^2_{\tilde{H}^{1,0}}\|V^\varepsilon\|^2_H+\|V^0\|^2_{\tilde{H}^{1,1}})ds\\
&+CE\left(\int^t_0\|u^\varepsilon-u^0\|^2_H+\|\partial_1(u^\varepsilon-u^0)\|^2_Hds\right)\\
\leqslant& C E\int^t_0\left((1+\|u^0(s)\|^2_{\tilde{H}^{1,1}})\sup_{l\in[0,s]}\|V^\varepsilon(l)-V^0(l)\|^2_H\right)ds+C(\sqrt{\varepsilon}+\varepsilon).
\end{align*}

Then by Gronwall's inequality and Lemma \ref{estimates from LZZ18} we have

\begin{align*}
&E\left(\sup_{s\in[0,t]}\|V^\varepsilon(s)-V^0(s)\|^2_H+\int^t_0\|V^\varepsilon(s)-V^0(s)\|^2_{\tilde{H}^{1,0}}ds\right)\\
\leqslant &C(\sqrt{\varepsilon}+\varepsilon)\exp\left(C\int^t_0(1+\|u^0(s)\|^2_{\tilde{H}^{1,1}})ds\right)\leqslant C(\sqrt{\varepsilon}+\varepsilon).
\end{align*}

Let $\varepsilon\rightarrow 0$, we complete the proof.

\end{proof}

It remains to establish the following lemma.

\begin{lemma}\label{estimate for Ve}
Assume (A0)-(A3) hold with $K_2<\frac{2}{21}, \tilde{K}_2<\frac{1}{5}, L_2<\frac{1}{5}$. Let $V^\varepsilon$ be the solution to \eqref{eq. for Vvarepsilon}, then  there exists a constant $\varepsilon_0>0$ such that
$$\sup_{\varepsilon\in(0,\varepsilon_0)}E\int^T_0\|V^\varepsilon(s)\|^2_H\|V^\varepsilon(s)\|^2_{\tilde{H}^{1,0}}ds<\infty.$$
\end{lemma}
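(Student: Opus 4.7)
The plan is to obtain a uniform bound on $E\sup_{t\le T}\|V^\varepsilon\|_H^4$ and $E\int_0^T\|V^\varepsilon\|_H^2\|\partial_1 V^\varepsilon\|_H^2\,ds$ simultaneously, via Itô's formula applied to $\|V^\varepsilon\|_H^4$. Since $\|V^\varepsilon\|_{\tilde{H}^{1,0}}^2=\|V^\varepsilon\|_H^2+\|\partial_1 V^\varepsilon\|_H^2$, these two controls dominate the target quantity $E\int_0^T\|V^\varepsilon\|_H^2\|V^\varepsilon\|_{\tilde{H}^{1,0}}^2ds$. A direct bound through Cauchy--Schwarz from Proposition~\ref{estimate for ue-u0} would require an $L^2$ bound on $\int_0^T\|V^\varepsilon\|_{\tilde{H}^{1,0}}^2ds$ that is not immediate, which is why working at the level of a fourth-power Itô identity is cleaner.

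Setting $Y_t:=\|V^\varepsilon(t)\|_H^2$, Itô's formula applied to \eqref{eq. for Vvarepsilon} and then to $Y_t^2$ yields
$$
Y_t^2+4\int_0^t Y_s\|\partial_1 V^\varepsilon\|_H^2\,ds = -4\int_0^t Y_s\, b(V^\varepsilon,u^0,V^\varepsilon)\,ds+\int_0^t\bigl(2Y_s\|\sigma\|^2_{L_2(l^2,H)}+4\|\sigma^{*}V^\varepsilon\|_{l^2}^2\bigr)ds+4\int_0^t Y_s\,dM_s,
$$
with $dM_s=\langle V^\varepsilon,\sigma(s,u^\varepsilon)dW(s)\rangle$. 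Here I used $b(u^0,V^\varepsilon,V^\varepsilon)=0$ and, via $u^\varepsilon=u^0+\sqrt{\varepsilon}V^\varepsilon$ together with $b(V^\varepsilon,V^\varepsilon,V^\varepsilon)=0$, the identity $\langle B(V^\varepsilon,u^\varepsilon),V^\varepsilon\rangle=b(V^\varepsilon,u^0,V^\varepsilon)$. The trilinear bound already used in the proof of Proposition~\ref{estimate for ue-u0}, $|b(V^\varepsilon,u^0,V^\varepsilon)|\le \tfrac14\|\partial_1 V^\varepsilon\|_H^2+C(1+\|u^0\|_{\tilde{H}^{1,1}}^2)Y_s$, absorbs the first piece into the dissipation.

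For the noise, I would substitute $u^\varepsilon=u^0+\sqrt{\varepsilon}V^\varepsilon$ into (A1) to obtain $\|\sigma(s,u^\varepsilon)\|^2\le C(1+\|\partial_1 u^0\|_H^2)+C\varepsilon Y_s+2K_2\varepsilon\|\partial_1 V^\varepsilon\|_H^2$, and combine with $\|\sigma^{*}V^\varepsilon\|_{l^2}^2\le Y_s\|\sigma\|^2_{L_2(l^2,H)}$. The resulting term $12K_2\varepsilon Y_s\|\partial_1 V^\varepsilon\|_H^2$ is absorbable into the LHS once $\varepsilon_0$ is chosen so that $12K_2\varepsilon_0<1$. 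The BDG inequality combined with the same pointwise estimate on $\|\sigma^{*}V^\varepsilon\|_{l^2}^2$ gives
$$
E\sup_{s\le t}\Bigl|4\int_0^s Y_r\,dM_r\Bigr|\le CE\Bigl(\int_0^t Y_s^3\|\sigma\|^2 ds\Bigr)^{1/2}\le \delta E\sup_{s\le t}Y_s^2+C_\delta E\int_0^t Y_s\|\sigma\|^2 ds,
$$
where I used Young's inequality on $(\sup_s Y_s)(\int_0^t Y_s\|\sigma\|^2 ds)^{1/2}$.

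Taking $\sup_{s\le t}$ and expectations, and applying $Y_s g(s)\le \tfrac12 Y_s^2 g(s)+\tfrac12 g(s)$ to the terms linear in $Y_s$, one reaches an inequality of the form $E\sup_{s\le t}Y_s^2+E\int_0^t Y_s\|\partial_1 V^\varepsilon\|_H^2 ds\le C+C\int_0^t h(s)E\sup_{r\le s}Y_r^2 ds$, with $h(s)=1+\|u^0\|_{\tilde{H}^{1,1}}^2+\|\partial_1 u^0\|_H^2\in L^1([0,T])$ by Lemma~\ref{estimates from LZZ18}. Gronwall then closes the estimate, yielding the claimed uniform bound. The main obstacle is the $K_2\|\partial_1 u^\varepsilon\|_H^2$ contribution in (A1), which a priori threatens to overwhelm the dissipation $Y_s\|\partial_1 V^\varepsilon\|_H^2$; the splitting $u^\varepsilon=u^0+\sqrt\varepsilon V^\varepsilon$ converts it into a multiple of $\varepsilon Y_s\|\partial_1 V^\varepsilon\|_H^2$ plus harmless terms involving the deterministic $u^0$, which is precisely what forces the conclusion to be restricted to small $\varepsilon$.
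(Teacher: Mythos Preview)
Your proposal is correct and follows essentially the same route as the paper: It\^o's formula for $\|V^\varepsilon\|_H^4$, the reduction $b(V^\varepsilon,u^\varepsilon,V^\varepsilon)=b(V^\varepsilon,u^0,V^\varepsilon)$ via $u^\varepsilon=u^0+\sqrt{\varepsilon}V^\varepsilon$, the splitting of $\|\sigma(s,u^\varepsilon)\|^2_{L_2(l^2,H)}$ through (A1) so that the dangerous piece becomes $C\varepsilon Y_s\|\partial_1 V^\varepsilon\|_H^2$ and is absorbed for small $\varepsilon$, BDG on the martingale, and Gronwall. The only cosmetic difference is that the paper handles the terms linear in $Y_s$ by invoking Proposition~\ref{estimate for ue-u0} (which already gives $E\sup_t\|V^\varepsilon\|_H^2\le C$), whereas you bootstrap them via $Y_s g(s)\le \tfrac12 Y_s^2 g(s)+\tfrac12 g(s)$; both are equally valid.
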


\begin{proof}
Applying It\^o's formula to $\|V^\varepsilon\|^4_H$, we have
\begin{align*}
d\|V^\varepsilon\|^4_H\leqslant &2\|V^\varepsilon\|^2_H\Big(-2\|\partial_1V^\varepsilon\|^2_Hdt-2b(V^\varepsilon, u^\varepsilon, V^\varepsilon)dt\\
&+2\langle\sigma(t,u^\varepsilon)dW(t), V^\varepsilon\rangle+\|\sigma(t,u^\varepsilon)\|^2_{L_2(l^2,H)}dt\Big)+4\|\left(\sigma(t,u^\varepsilon(t))\right)^* V^\varepsilon\|^2_{l^2}dt.
\end{align*}

Taking the supremum and the expectation, we have
\begin{align*}
&E\left(\sup_{s\in[0,t]}\|V^\varepsilon(s)\|^4_H+4\int^t_0\|V^\varepsilon(s)\|^2_H\|\partial_1V^\varepsilon(s)\|^2_Hds\right)\\
\leqslant& 4E\left(\int^t_0\|V^\varepsilon(s)\|^2_H|b(V^\varepsilon(s),u^\varepsilon(s),V^\varepsilon(s))|ds\right)\\
&+6E\left(\int^t_0\|V^\varepsilon(s)\|^2_H\|\sigma(s,u^\varepsilon(s))\|^2_{L_2(l^2,H)}ds\right)\\
&+4E\left(\sup_{s\in[0,t]}\left|\int^t_0\|V^\varepsilon(s)\|^2_H\langle\sigma(s,u^\varepsilon(s))dW(s), V^\varepsilon(s)\rangle\right| \right)\\
=:&I_1+I_2+I_3.
\end{align*}

Recall that $V^\varepsilon=\frac{u^\varepsilon-u^0}{\sqrt{\varepsilon}}$.  By Lemma \ref{b(u,v,w)}, we have
\begin{align*}
I_1(t) =&  4E\left(\int^t_0\|V^\varepsilon(s)\|^2_H|b(V^\varepsilon(s), u^0(s)+\sqrt{\varepsilon}V^\varepsilon(s),V^\varepsilon(s))|ds\right)\\
=& 4E\left(\int^t_0\|V^\varepsilon(s)\|^2_H|b(V^\varepsilon(s), u^0(s),V^\varepsilon(s))|ds\right)\\
\leqslant & E\left(\int^t_0\|V^\varepsilon(s)\|^2_H(\|\partial_1V^\varepsilon(s)\|^2_H+C(1+\|u^0(s)\|^2_{\tilde{H}^{1,1}})\|V^\varepsilon(s)\|^2_H)ds\right)\\
\leqslant &E\int^t_0\|V^\varepsilon(s)\|^2_H\|\partial_1V^\varepsilon(s)\|^2_Hds+CE\left(\int^t_0(1+\|u^0(s)\|^2_{\tilde{H}^{1,1}})\sup_{l\in[0,s]}\|V^\varepsilon(l)\|^4_Hds\right).
\end{align*}

Note that Proposition \ref{estimate for ue-u0} implies the boundedness of $u^0$ in $L^2([0,T],\tilde{H}^{1,1})$. By (A1) we have
\begin{align*}
I_2(t)\leqslant & CE\left(\int^t_0\|V^\varepsilon(s)\|^2_H(1+\|u^\varepsilon(s)\|^2_H+\|\partial_1u^\varepsilon(s)\|^2_H)ds\right)\\
\leqslant &CE\left(\int^t_0\|V^\varepsilon(s)\|^2_H(1+\|u^0(s)\|^2_H +\varepsilon\|V^\varepsilon(s)\|^2_H+\|\partial_1u^0(s)\|^2_H+\varepsilon\|\partial_1V^\varepsilon(s)\|^2_H)ds\right)\\
\leqslant&C+\varepsilon CE\left(\sup_{s\in[0,t]}\|V^\varepsilon(s)\|^4_H\right)+\varepsilon C E\left(\int^t_0\|V^\varepsilon(s)\|^2_H\|\partial_1V^\varepsilon(s)\|^2_Hds\right).
\end{align*} 

By the Burkholder-Davis-Gundy inequality, (A1) and Proposition \ref{estimate for ue-u0}, we have
\begin{align*}
&I_3(t)\\
\leqslant &C E\left(\int^t_0\|V^\varepsilon(s)\|^6_H\|\sigma(s,u^\varepsilon(s))\|^2_{L_2(l^2,H)}ds\right)^\frac{1}{2}\\
\leqslant & C E\left(\sup_{s\in[0,t]}\|V^\varepsilon(s)\|^2_H\left(\int^t_0\|V^\varepsilon(s)\|^2_H(1+\|u^\varepsilon(s)\|^2_H+\|\partial_1u^\varepsilon(s)\|^2_H)ds\right)^\frac{1}{2}\right)\\
\leqslant& \frac{1}{2}E\left(\sup_{s\in[0,t]}\|V^\varepsilon(s)\|^4_H\right)\\
&+CE\left(\int^t_0\|V^\varepsilon(s)\|^2_H(1+\|u^0(s)\|^2_H +\varepsilon\|V^\varepsilon(s)\|^2_H+\|\partial_1u^0(s)\|^2_H+\varepsilon\|\partial_1V^\varepsilon(s)\|^2_H)ds\right)\\
\leqslant &(\frac{1}{2}+\varepsilon C)E\left(\sup_{s\in[0,t]}\|V^\varepsilon(s)\|^4_H\right)+C+\varepsilon CE\left(\int^t_0\|V^\varepsilon(s)\|^2_H\|\partial_1V^\varepsilon(s)\|^2_Hds\right).
\end{align*}

Combining the above estimates, there exists constants $C_0$ and $C_1$,
\begin{align*}
&E\left((\frac{1}{2}-C_0\varepsilon)\sup_{s\in[0,t]}\|V^\varepsilon(s)\|^2_H+(3-C_1\varepsilon)\int^t_0\|V^\varepsilon(s)\|^2_H\|\partial_1V^\varepsilon(s)\|^2_Hds\right)\\
\leqslant &C+CE\left(\int^t_0(1+\|u^0(s)\|^2_{\tilde{H}^{1,1}})\sup_{l\in[0,s]}\|V^\varepsilon(l)\|^4_Hds\right).
\end{align*}

When $\varepsilon<\varepsilon_0:=\min\{\frac{1}{4C_0}, \frac{3}{2C_1}\}$, by Gronwall's inequality, we have
\begin{align*}
E\left(\sup_{s\in[0,t]}\|V^\varepsilon(s)\|^4_H+\int^t_0\|V^\varepsilon(s)\|^2_H\|\partial_1V^\varepsilon(s)\|^2_Hds\right)\leqslant C\exp\left(\int^t_0(1+\|u^0(s)\|^2_{\tilde{H}^{1,1}})ds\right).
\end{align*}

Again by Lemma \ref{estimates from LZZ18} we complete the proof.

\end{proof}

\section{Moderate deviations}

 In this section, we will prove that $Z^\varepsilon:=\frac{1}{\sqrt{\varepsilon}\lambda(\varepsilon)}(u^\varepsilon-u^0)$ satisfies LDP on $$L^\infty([0,T], H)\cap L^2([0,T], \tilde{H}^{1,0})\cap C([0,T],H^{-1})$$ if $\lambda(\varepsilon)$ satisfies:
$$\lambda(\varepsilon)\rightarrow\infty, \text{ }\text{ }\text{ }\sqrt{\varepsilon}\lambda(\varepsilon)\rightarrow 0\text{ as }\varepsilon\rightarrow0.$$

  We will use the weak convergence approach introduced by Budhiraja and Dupuis in \cite{BD00}.  The starting point is the equivalence between the large deviation principle and the Laplace principle. This result was first formulated in \cite{P93} and it is essentially a consequence of Varadhan's lemma \cite{V66} and Bryc's converse theorem \cite{B90}.

\begin{remark}
By \cite{DZ98} we have the the equivalence between the large deviation principle and the Laplace principle in completely regular topological spaces. In \cite{BD00} the authors give the weak convergence approach on a Polish space.  Since the  proof  does not depend on the separability and the completeness, the result also holds in metric spaces.
\end{remark}

Let $\{W(t)\}_{t\geqslant 0}$ be a cylindrical Wiener process on $l^2$ w.r.t. a complete filtered probability space $(\Omega, \mathcal{F}, \mathcal{F}_t, P)$ (i.e. the path of $W$ take values in $C([0,T]; U)$, where $U$ is another Hilbert space such that the embedding $l^2\subset U$ is Hilbert-Schmidt). For $\varepsilon>0$, suppose $g^\varepsilon$: $C([0,T], U)\rightarrow E$ is a measurable map. 
Let

$$\mathcal{A}:=\left\{v: v \text{ is }l^2\text{-valued }\mathcal{F}_t\text{-predictable process and }\int^T_0\|v(s)(\omega)\|^2_{l^2}ds<\infty\text{ a.s.}\right\},$$
$$S_N:=\left\{\phi\in L^2([0,T],l^2):\text{ }\int^T_0\|\phi(s)\|^2_{l^2}ds\leqslant N\right\},$$
$$\mathcal{A}_N:=\left\{v\in\mathcal{A}:\text{ } v(\omega)\in S_N\text{ P-a.s.}\right\}.$$
Here we will always refer to the weak topology on $S_N$ in the following if we do not state it explicitly.

Now we formulate the following sufficient conditions for the Laplace principle of $g^\varepsilon(W(\cdot))$ as $\varepsilon\rightarrow 0$.

\begin{hyp}\label{Hyp}
There exists a measurable map $g^0: C([0,T], U)\rightarrow E$ such that the following two conditions hold:\\
1. Let $\{v^\varepsilon:\varepsilon>0\}\subset  \mathcal{A}_N$ for some $N<\infty$. If $v^\varepsilon$ converge to $v$ in distribution as $S_N$-valued random elements, then 
$$g^\varepsilon\left(W(\cdot)+\frac{1}{\sqrt{\varepsilon}}\int^\cdot_0v^\varepsilon(s)ds\right)\rightarrow g^0\left(\int^\cdot_0v(s)ds\right)$$
in distribution as $\varepsilon\rightarrow 0$.\\
2. For each $N<\infty$, the set
$$K_N=\left\{g^0\left(\int^\cdot_0\phi(s)ds\right): \phi\in S_N\right\}$$
is a compact subset of $E$. 
\end{hyp}

\begin{lemma}[{\cite[Theorem 4.4]{BD00}}]\label{weak convergence method}
If $u^\varepsilon=g^\varepsilon(W)$ satisfies the Hypothesis \ref{Hyp}, then the family $\{u^\varepsilon\}$ satisfies the Laplace principle (hence large deviation principle) on $E$ with the good rate function $I$ given by
\begin{equation}
I(f)=\inf_{\{\phi\in L^2([0,T], l^2):f=g^0(\int^\cdot_0\phi(s)ds)\}}\left\{\frac{1}{2}\int^T_0\|\phi(s)\|^2_{l^2}ds\right\}.
\end{equation}
\end{lemma}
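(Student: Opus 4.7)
The plan is to deduce the Laplace principle (which on a metric space is equivalent to the LDP with good rate function via the standard Varadhan/Bryc arguments) from the Bou\'e--Dupuis variational representation combined with Hypothesis \ref{Hyp}. For bounded measurable $F : C([0,T], U) \to \mathbb{R}$, that representation reads
\begin{equation*}
-\log E\bigl[e^{-F(W)}\bigr] = \inf_{v \in \mathcal{A}} E\left[F\Bigl(W + \int_0^\cdot v(s)\,ds\Bigr) + \frac{1}{2}\int_0^T \|v(s)\|_{l^2}^2\,ds\right].
\end{equation*}
Applying it to $F = h(g^\varepsilon(\cdot))/\varepsilon$ for a bounded continuous $h$ on $E$ and rescaling $v = \tilde v/\sqrt{\varepsilon}$ yields
\begin{equation*}
-\varepsilon \log E\bigl[e^{-h(u^\varepsilon)/\varepsilon}\bigr] = \inf_{\tilde v \in \mathcal{A}} E\left[h\Bigl(g^\varepsilon\Bigl(W + \tfrac{1}{\sqrt{\varepsilon}}\int_0^\cdot \tilde v\,ds\Bigr)\Bigr) + \tfrac12\int_0^T \|\tilde v\|_{l^2}^2\,ds\right],
\end{equation*}
which directly exposes the controlled objects of Hypothesis \ref{Hyp}(1).

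Next I would verify that $I$ is a good rate function. A level set $\{f : I(f) \le r\}$ is contained in $K_{2r+1}$, which is compact in $E$ by Hypothesis \ref{Hyp}(2). For lower semicontinuity, take $f_n \to f$ with $I(f_n) \le r$, pick near-minimizers $\phi_n$ satisfying $\tfrac12\int \|\phi_n\|^2 \le r + 1/n$ and $f_n = g^0(\int_0^\cdot \phi_n\,ds)$, observe that $\phi_n \in S_{2r+1}$ which is weakly compact in $L^2([0,T],l^2)$, extract a subsequence $\phi_n \rightharpoonup \phi$, deduce $\tfrac12\int\|\phi\|^2 \le r$ from weak lower semicontinuity, and invoke Hypothesis \ref{Hyp}(1) with the constant family $g^\varepsilon \equiv g^0$ and deterministic controls $\phi_n$ to identify $f = g^0(\int_0^\cdot \phi\,ds)$, giving $I(f) \le r$.

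For the Laplace upper bound, fix $\delta > 0$, choose $\phi \in L^2$ with $h(g^0(\int_0^\cdot \phi\,ds)) + \tfrac12\int_0^T\|\phi\|^2 \le \inf_{x \in E}\{h(x) + I(x)\} + \delta$, insert the deterministic control $\tilde v \equiv \phi$ into the variational formula, and apply Hypothesis \ref{Hyp}(1) together with boundedness and continuity of $h$ to conclude $\limsup_{\varepsilon \to 0} -\varepsilon \log E[e^{-h(u^\varepsilon)/\varepsilon}] \le \inf\{h+I\} + \delta$. For the lower bound, take $\delta$-minimizers $\tilde v^\varepsilon$ in the variational formula; boundedness of $h$ lets one truncate so that $\tilde v^\varepsilon \in \mathcal{A}_N$ for some $N = N(\delta)$ independent of $\varepsilon$, at a cost of $\delta$. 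The laws of $\tilde v^\varepsilon$ are automatically tight on the weakly compact set $S_N$, so along a subsequence $\tilde v^\varepsilon$ converges in distribution to some $v$; combining this with Hypothesis \ref{Hyp}(1) on an enlarged probability space via Skorokhod representation, applying Fatou and the weak lower semicontinuity of $\tfrac12\int_0^T\|\cdot\|_{l^2}^2\,ds$, and finally using the definition of $I$ yield $\liminf_{\varepsilon \to 0} -\varepsilon \log E[e^{-h(u^\varepsilon)/\varepsilon}] \ge \inf\{h+I\} - 2\delta$. The main technical obstacle is the lower bound, where the controls $\tilde v^\varepsilon$ converge only weakly in $L^2$ while the controlled processes must converge in the (typically strong) topology of $E$; reconciling these requires the joint weak convergence furnished by Hypothesis \ref{Hyp}(1) together with Skorokhod representation, rather than independent limit arguments for the two pieces.
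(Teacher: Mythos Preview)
The paper does not prove this lemma at all: it is quoted verbatim from \cite[Theorem 4.4]{BD00} and used as a black box. Your proposal is a correct outline of the original Budhiraja--Dupuis argument (Bou\'e--Dupuis representation, goodness of $I$ via Hypothesis~\ref{Hyp}(2) and weak compactness of $S_N$, upper bound via a deterministic near-optimal control, lower bound via tightness of near-optimal controls in $S_N$ plus Skorokhod), so in substance it reproduces what \cite{BD00} does rather than anything the present paper contributes.
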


Let us introduce the following skeleton equation associated to $Z^\varepsilon=\frac{1}{\sqrt{\varepsilon}\lambda(\varepsilon)}(u^\varepsilon-u^0)$, for $\phi\in L^2([0,T], l^2)$:
\begin{equation}\label{skeleton eq. MDP}\aligned
&dX^\phi(t)=\partial^2_1 X^\phi(t)dt-B(X^\phi(t), u^0(t))dt-B(u^0(t),X^\phi(t))dt+\sigma(t, u^0(t))\phi(t)dt,\\
&X^\phi(0)=0.
\endaligned
\end{equation}

Define $g^0:C([0,T],U)\rightarrow L^\infty([0,T], H)\bigcap L^2([0,T], \tilde{H}^{1,0})\bigcap C([0,T],H^{-1})$ by
\begin{align*}
g^0(h):=
\left\{
             \begin{array}{ll}
             X^\phi, &\text{ if }h=\int^\cdot_0\phi(s)ds \text{ for some }\phi\in L^2([0,T],l^2);  \\
             0, &\text{  otherwise.} 
             \end{array}
\right.
\end{align*}
Then the rate function can be written as
\begin{equation}\label{rate function LDP}
I(g)=\inf\left\{\frac{1}{2}\int^T_0\|\phi(s)\|^2_{l^2}ds: \text{ }g=X^\phi,\text{ }\phi\in L^2([0,T],l^2)\right\},
\end{equation}
where $g\in L^\infty([0,T], H)\bigcap L^2([0,T], \tilde{H}^{1,0})\bigcap C([0,T],H^{-1})$.

The main result of this section is the following one:
\begin{Thm}\label{main result MDP}
Assume (A0)-(A3) hold with $K_2<\frac{2}{21}, \tilde{K}_2<\frac{1}{5}, L_2<\frac{1}{5}$ and  $u_0\in \tilde{H}^{0,2}$, then $Z^\varepsilon$ satisfies a large deviation principle on $L^\infty([0,T], H)\bigcap L^2([0,T], \tilde{H}^{1,0})\bigcap C([0,T],H^{-1})$ with speed $\lambda^2(\varepsilon)$ and with the good rate function $I$ given by (\ref{rate function LDP}), more precisely, it holds that\\
(U) for all closed sets $F\subset L^\infty([0,T], H)\bigcap L^2([0,T], \tilde{H}^{1,0})\bigcap C([0,T],H^{-1})$ we have 
$$\limsup_{\varepsilon\rightarrow 0}\frac{1}{\lambda^2(\varepsilon)}\log P\left(\frac{u^\varepsilon-u^0}{\sqrt{\varepsilon}\lambda(\varepsilon)}\in F\right)\leqslant-\inf_{g\in F}I(g),$$
(L) for all open sets $G\subset  L^\infty([0,T], H)\bigcap L^2([0,T], \tilde{H}^{1,0})\bigcap C([0,T],H^{-1})$ we have 
$$\limsup_{\varepsilon\rightarrow 0}\frac{1}{\lambda^2(\varepsilon)}\log P\left(\frac{u^\varepsilon-u^0}{\sqrt{\varepsilon}\lambda(\varepsilon)}\in G\right)\geqslant-\inf_{g\in G}I(g).$$
\end{Thm}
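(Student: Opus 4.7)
The plan is to invoke Lemma \ref{weak convergence method} with $\tilde{\varepsilon} := 1/\lambda^2(\varepsilon)$ in place of $\varepsilon$, so the LDP at speed $\lambda^2(\varepsilon)$ for $Z^\varepsilon$ reduces to verifying the two clauses of Hypothesis \ref{Hyp} for the map $g^\varepsilon$ defined implicitly by $Z^\varepsilon = g^\varepsilon(W)$. Writing $u^\varepsilon = u^0 + \sqrt{\varepsilon}\lambda(\varepsilon)Z^\varepsilon$, a direct computation gives
\begin{align*}
dZ^\varepsilon = \partial_1^2 Z^\varepsilon \, dt - B(Z^\varepsilon, u^0)\, dt - B(u^0, Z^\varepsilon)\, dt - \sqrt{\varepsilon}\lambda(\varepsilon)\, B(Z^\varepsilon)\, dt + \frac{1}{\lambda(\varepsilon)}\sigma(t, u^\varepsilon)\, dW,
\end{align*}
so Girsanov's transformation $W \mapsto W + \lambda(\varepsilon)\int_0^\cdot v^\varepsilon\, ds$ produces a controlled process $Z^{\varepsilon, v^\varepsilon}$ satisfying the same equation with the additional drift $\sigma(t, u^{\varepsilon, v^\varepsilon})v^\varepsilon\, dt$, where $u^{\varepsilon, v^\varepsilon} := u^0 + \sqrt{\varepsilon}\lambda(\varepsilon)Z^{\varepsilon, v^\varepsilon}$. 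The skeleton equation \eqref{skeleton eq. MDP} is then the natural formal limit since $\sqrt{\varepsilon}\lambda(\varepsilon) \to 0$ and $1/\lambda(\varepsilon) \to 0$.

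For clause (2) (compactness of $K_N$), I would first establish well-posedness of \eqref{skeleton eq. MDP} in $L^\infty([0,T], \tilde{H}^{0,1}) \cap L^2([0,T], \tilde{H}^{1,1}) \cap C([0,T], H^{-1})$ for any $\phi \in L^2([0,T], l^2)$, by a Galerkin scheme analogous to Lemma \ref{wellposedness limit eq. of CLT}; the key $\tilde{H}^{0,1}$-bound on $X^\phi$ again exploits the $\tilde{H}^{0,2}$-regularity of $u^0$ supplied by Lemma \ref{wellposedness result for u0 in H02} together with the commutator estimate from \cite{CDGG00}. Given $\phi_n \rightharpoonup \phi$ weakly in $S_N$, an energy estimate on $X^{\phi_n} - X^\phi$ using Lemma \ref{b(u,v,w)} and Gronwall's inequality, combined with the weak-to-strong continuity of $\phi \mapsto \int_0^\cdot \sigma(s, u^0)\phi(s)\, ds$ from $S_N$ to $C([0,T], H)$ (a consequence of the Hilbert-Schmidt bound on $\sigma(\cdot, u^0)$ and equicontinuity in $t$), yields $X^{\phi_n} \to X^\phi$ in $E$, hence $K_N$ is the continuous image of a weakly compact set and is compact.

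For clause (1), the main step is uniform-in-$\varepsilon$ a priori bounds on $\{Z^{\varepsilon, v^\varepsilon}\}$ in $L^\infty([0,T], \tilde{H}^{0,1}) \cap L^2([0,T], \tilde{H}^{1,1})$, obtained by applying It\^o's formula to $\|Z^{\varepsilon, v^\varepsilon}\|_H^2$ and $\|Z^{\varepsilon, v^\varepsilon}\|_{\tilde{H}^{0,1}}^2$ and exploiting $\int_0^T \|v^\varepsilon\|_{l^2}^2\, ds \leq N$ together with $u^0 \in L^\infty(\tilde{H}^{0,2}) \cap L^2(\tilde{H}^{1,2})$. The genuinely quadratic perturbation $\sqrt{\varepsilon}\lambda(\varepsilon) B(Z^{\varepsilon, v^\varepsilon})$ must be estimated in the anisotropic $\tilde{H}^{0,1}$-framework and absorbed by the dissipative left-hand side, which is possible because the prefactor $\sqrt{\varepsilon}\lambda(\varepsilon) \to 0$. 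These bounds yield tightness of the laws of $(Z^{\varepsilon, v^\varepsilon}, v^\varepsilon)$ in $E \times S_N$ (the second factor being weakly compact), and a Skorokhod representation lets me pass to an a.s.-convergent subsequence. On this subsequence the noise $\frac{1}{\lambda(\varepsilon)} \int_0^\cdot \sigma(s, u^{\varepsilon, v^\varepsilon})\, dW$ vanishes in $L^2(H)$ by (A1) and $\lambda(\varepsilon)\to\infty$, the quadratic term vanishes by $\sqrt{\varepsilon}\lambda(\varepsilon)\to 0$ together with the uniform bounds, and the controlled drift converges to $\sigma(\cdot, u^0) v$ using (A3) and $u^{\varepsilon, v^\varepsilon} \to u^0$; identifying the limit as $X^v$ then completes the verification. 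The main obstacle will be the interplay between the anisotropic dissipation (which forces working in $\tilde{H}^{0,1}$ and hence using the commutator estimate together with $\tilde{H}^{0,2}$-regularity of $u^0$) and the nonlinear term $\sqrt{\varepsilon}\lambda(\varepsilon) B(Z^{\varepsilon, v^\varepsilon})$, whose control in this stronger norm is noticeably more delicate than in the CLT argument of Lemma \ref{estimate for Ve}.
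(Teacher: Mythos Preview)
Your overall architecture is correct and matches the paper: invoke the Budhiraja--Dupuis criterion (Lemma~\ref{weak convergence method}) with speed $\lambda^2(\varepsilon)$, verify compactness of $K_N$ via well-posedness of the skeleton equation in $L^\infty(\tilde{H}^{0,1})\cap L^2(\tilde{H}^{1,1})$ (which is where $u_0\in\tilde{H}^{0,2}$ and the commutator estimate enter), then prove the weak convergence clause by uniform a~priori bounds, tightness, Skorokhod, and identification of the limit. Two technical points in your sketch, however, do not go through as written and are handled differently in the paper.

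First, for the compactness of $K_N$ you propose a direct energy estimate on $X^{\phi_n}-X^\phi$ combined with weak-to-strong continuity of $\phi\mapsto\int_0^\cdot\sigma(s,u^0)\phi(s)\,ds$. The difficulty is that the cross term in the energy identity is $\int_0^t\langle w^n,\sigma(s,u^0)(\phi_n-\phi)\rangle\,ds$, in which $w^n$ itself depends on $n$; convergence of the time-integrated source in $C([0,T],H)$ does not by itself control this pairing. The paper instead proceeds in two steps: the uniform $\tilde{H}^{0,1}$/$\tilde{H}^{1,1}$ bounds plus Aubin--Lions give $X^{\phi_n}\to X^\phi$ strongly in $L^2([0,T],H)$ first, and only then is the energy estimate used, bounding the cross term by $C\sqrt{N}\bigl(\int_0^t\|w^n\|_H^2(1+\|u^0\|_{\tilde H^{1,0}}^2)\,ds\bigr)^{1/2}$, which now vanishes because $\|w^n\|_{L^2_tH}\to 0$ is already known. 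Second, and more substantively, your claim that the quadratic term $\sqrt{\varepsilon}\lambda(\varepsilon)B(Z^{\varepsilon,v^\varepsilon})$ in the $\tilde{H}^{0,1}$ estimate ``can be absorbed because the prefactor tends to $0$'' is not sufficient. Lemma~\ref{estimate for b with partial_2} yields a contribution of the form $C\sqrt{\varepsilon}\lambda(\varepsilon)\,\|\partial_1 X^\varepsilon\|_H^2\,\|\partial_2 X^\varepsilon\|_H^2$, and the Gr\"onwall coefficient $\|\partial_1 X^\varepsilon\|_H^2$ is only controlled in expectation, not pathwise. The paper resolves this by working with the weighted quantity $e^{-k\int_0^t\|\partial_1 X^\varepsilon\|_H^2 ds}\|X^\varepsilon(t)\|_{\tilde{H}^{0,1}}^2$ (see Lemma~\ref{estimate eq. for weak convergence}), which produces exactly the negative counter-term needed; this random weight is then carried through the tightness argument (Lemma~\ref{tightness lemma}). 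Relatedly, in the weak-convergence step the paper splits off the stochastic convolution via an auxiliary process $Y^\varepsilon$ solving $dY^\varepsilon=\partial_1^2 Y^\varepsilon\,dt+\lambda^{-1}(\varepsilon)\sigma\,dW$ and shows $Y^\varepsilon\to 0$ in the weighted $\tilde{H}^{0,1}$ norm, so that the remaining equation for $X^\varepsilon-Y^\varepsilon$ is pathwise deterministic and the limit identification can be done $\omega$-by-$\omega$ after Skorokhod.
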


By Lemma \ref{weak convergence method}, we should check that Hypothesis \ref{Hyp} holds with $\varepsilon$ replaced by $\lambda^{-2}$.  The proof is divided into the following lemmas. The first lemma is about the solution to \eqref{skeleton eq. MDP}.

\begin{prop}\label{wellposedness for skeleton eq. MDP}
Assume (A0)-(A2) hold. For all $u_0\in\tilde{H}^{0,2}$ and $\phi\in L^2([0,T], l^2)$ there exists a unique solution $$X^\phi\in L^\infty([0,T], \tilde{H}^{0,1})\bigcap L^2([0,T], \tilde{H}^{1,1})\bigcap C([0,T],H^{-1})$$ to (\ref{skeleton eq. MDP}).
\end{prop}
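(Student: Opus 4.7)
The plan is to mimic the Galerkin scheme used in the proof of Lemma \ref{wellposedness limit eq. of CLT}, noting that the present equation is \emph{linear} in $X^\phi$ and has a deterministic (rather than stochastic) forcing term, so several simplifications are available. Let $\{e_k\}_{k\geqslant 1}$ be the orthonormal basis of $H$ from that proof and let $P_n$ denote the orthogonal projection of $H$ onto $\mathcal{H}_n=\mathrm{span}\{e_1,\ldots,e_n\}$. The finite-dimensional system
\begin{equation*}
\frac{d}{dt}X_n^\phi = P_n\bigl[\partial_1^2 X_n^\phi - B(X_n^\phi,u^0)-B(u^0,X_n^\phi)+\sigma(t,u^0(t))\phi(t)\bigr],\quad X_n^\phi(0)=0,
\end{equation*}
is a linear ODE on $\mathcal{H}_n$ with time-dependent coefficients and $L^2$-in-time forcing, so standard ODE theory gives a unique global solution $X_n^\phi\in C([0,T],\mathcal{H}_n)$.

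The next step is to derive uniform a priori bounds in $L^\infty_t\tilde{H}^{0,1}\cap L^2_t\tilde{H}^{1,1}$. Testing against $X_n^\phi$ in $\tilde{H}^{0,1}$ and estimating the two trilinear terms exactly as in the $V_n$-computation of Lemma \ref{wellposedness limit eq. of CLT} (this is where the $\tilde{H}^{0,2}$-regularity of $u^0$ provided by Lemma \ref{wellposedness result for u0 in H02} is used to close the estimate), one obtains for $\alpha<\tfrac12$
\begin{equation*}
\frac{1}{2}\frac{d}{dt}\|X_n^\phi\|_{\tilde{H}^{0,1}}^2+(1-\alpha)\|X_n^\phi\|_{\tilde{H}^{1,1}}^2
\leqslant C\|u^0\|_{\tilde{H}^{1,2}}^2\|X_n^\phi\|_{\tilde{H}^{0,1}}^2+\langle \sigma(t,u^0(t))\phi(t),X_n^\phi\rangle_{\tilde{H}^{0,1}}.
\end{equation*}
The forcing contribution is controlled, by Cauchy--Schwarz and (A2) together with Lemma \ref{estimates from LZZ18}, by
\begin{equation*}
\|\sigma(t,u^0(t))\|_{L_2(l^2,\tilde{H}^{0,1})}\|\phi(t)\|_{l^2}\|X_n^\phi\|_{\tilde{H}^{0,1}}\leqslant C(1+\|u^0\|_{\tilde{H}^{1,1}}^2)\|\phi(t)\|_{l^2}^2+\|X_n^\phi\|_{\tilde{H}^{0,1}}^2.
\end{equation*}
Since $\phi\in L^2([0,T],l^2)$ and $\int_0^T\|u^0(s)\|_{\tilde{H}^{1,2}}^2ds<\infty$ by Lemma \ref{wellposedness result for u0 in H02}, Gronwall's inequality yields
\begin{equation*}
\sup_{n}\Bigl(\sup_{t\in[0,T]}\|X_n^\phi(t)\|_{\tilde{H}^{0,1}}^2+\int_0^T\|X_n^\phi(s)\|_{\tilde{H}^{1,1}}^2ds\Bigr)<\infty.
\end{equation*}

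From these bounds, a standard compactness/monotonicity argument yields convergence of a subsequence of $X_n^\phi$ to some $X^\phi\in L^\infty([0,T],\tilde{H}^{0,1})\cap L^2([0,T],\tilde{H}^{1,1})$, and the linearity of the drift (with $u^0$ fixed) makes the passage to the limit in $B(X_n^\phi,u^0)+B(u^0,X_n^\phi)$ and in the forcing term much simpler than in Lemma \ref{wellposedness limit eq. of CLT}: no Skorokhod-type representation is needed since the problem is deterministic. The time-continuity $X^\phi\in C([0,T],H^{-1})$ follows by inspecting the equation, whose right-hand side lies in $L^2([0,T],H^{-1})$. For uniqueness, if $X_1,X_2$ are two solutions in the stated class with the same initial datum, then $w=X_1-X_2$ satisfies
\begin{equation*}
\partial_t w=\partial_1^2 w-B(w,u^0)-B(u^0,w),\qquad w(0)=0,
\end{equation*}
and testing against $w$ in $H$, estimating $|b(w,u^0,w)|$ via Lemma \ref{b(u,v,w)}, and applying Gronwall gives $w\equiv 0$; this is identical to the deterministic uniqueness argument alluded to at the end of Lemma \ref{wellposedness limit eq. of CLT}.

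The main obstacle is the a priori estimate in $\tilde{H}^{0,1}$: the vertical derivative creates terms of the form $b(\partial_2 u^0,X^\phi,\partial_2 X^\phi)$ and $b(X^\phi,\partial_2 u^0,\partial_2 X^\phi)$ which require control of $u^0$ in $\tilde{H}^{1,2}$. This is precisely the reason for imposing $u_0\in\tilde{H}^{0,2}$ and invoking Lemma \ref{wellposedness result for u0 in H02}; once that input is available the estimate closes via the same commutator-type splitting and interpolation used previously.
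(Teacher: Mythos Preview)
Your Galerkin approach is correct and the overall strategy---a priori $\tilde{H}^{0,1}$ bound via the same trilinear estimates as in Lemma \ref{wellposedness limit eq. of CLT}, then compactness, then the deterministic uniqueness argument---is sound. Two points are worth noting.

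\textbf{A minor slip in the forcing estimate.} Your Young's inequality splitting
\[
\|\sigma(t,u^0)\|_{L_2(l^2,\tilde{H}^{0,1})}\|\phi\|_{l^2}\|X_n^\phi\|_{\tilde{H}^{0,1}}\leqslant C(1+\|u^0\|_{\tilde{H}^{1,1}}^2)\|\phi\|_{l^2}^2+\|X_n^\phi\|_{\tilde{H}^{0,1}}^2
\]
produces a free term $(1+\|u^0\|_{\tilde{H}^{1,1}}^2)\|\phi\|_{l^2}^2$ which is a product of two functions each only in $L^1(0,T)$, hence not obviously integrable. The paper splits the other way,
\[
\|\sigma(t,u^0)\|_{L_2(l^2,\tilde{H}^{0,1})}\|\phi\|_{l^2}\|X^\phi\|_{\tilde{H}^{0,1}}\leqslant \|\sigma(t,u^0)\|_{L_2(l^2,\tilde{H}^{0,1})}^2+C\|\phi\|_{l^2}^2\|X^\phi\|_{\tilde{H}^{0,1}}^2,
\]
so that $\|\sigma\|^2\in L^1$ by (A2) and Lemma \ref{estimates from LZZ18}, while $\|\phi\|_{l^2}^2\in L^1$ is absorbed into the Gronwall factor. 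This is a trivial fix but necessary for the argument to close.

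\textbf{Difference in approximation scheme.} The paper does \emph{not} use Galerkin here; it instead introduces a vertical viscosity $\epsilon^2\partial_2^2$, obtaining a classical parabolic system whose smooth solutions obey the same $\tilde{H}^{0,1}$ bound, and then passes to the limit $\epsilon\to 0$ via Aubin--Lions compactness. The paper says explicitly that it records this compactness argument ``for the use in the proof of next lemma'': the same machinery (strong $L^2_tH$ and $C_tH^{-1-\delta}$ convergence of approximants) is recycled in Lemma \ref{good rate function MDP} to show compactness of the level sets $K_N$. Your Galerkin route proves the proposition equally well, but it does not set up that reusable compactness step, so if you continue to Lemma \ref{good rate function MDP} you will need to redo the Aubin--Lions argument there.
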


\begin{proof}
We start by giving a priori estimates. Using an $H^{0,1}$ energy estimate, we have 
\begin{align*}
&\frac{1}{2}\frac{d}{dt}\|X^\phi\|^2_{\tilde{H}^{0,1}}+\|\partial_1X^\phi\|^2_{\tilde{H}^{0,1}}\\
=&-\langle B(X^\phi,u^0)+B(u^0, X^\phi), X^\phi\rangle_{\tilde{H}^{0,1}}+ \langle \sigma(t,u^0(t))\phi(t), X^\phi\rangle_{\tilde{H}^{0,1}}.
\end{align*}

The first two terms on the roght hand side can be dealt by the same calculation as in the proof of Lemma \ref{wellposedness limit eq. of CLT}. For the third term we have
\begin{align*}
|\langle \sigma(t,u^0(t))\phi(t), X^\phi\rangle_{\tilde{H}^{0,1}}|\leqslant& \|\sigma(t,u^0)\|_{L_2(l^2, \tilde{H}^{0,1})}\|\phi(t)\|_{l^2}\|X^\phi(t)\|_{\tilde{H}^{0,1}}\\
\leqslant&  \tilde{K}_0+\tilde{K}_1\|u\|^2_{\tilde{H}^{0,1}}+\tilde{K}_2(\|\partial_1 u\|_H^2+\|\partial_1\partial_2u\|^2_H)+ C\|\phi\|^2_{l^2}\|X^\phi\|^2_{\tilde{H}^{0,1}}\\
\leqslant& C+C\|\phi\|^2_{l^2}\|X^\phi\|^2_{\tilde{H}^{0,1}},
\end{align*}
where we used (A2) in the second line.
Thus  we deduce that 
\begin{align*}
&\|X^\phi(t)\|^2_{H^{0,1}}+\int^t_0\|X^\phi(s)\|^2_{H^{1,1}}ds\\
\leqslant &C+C\int^t_0 \left(1+\|u^0\|^2_{H^{1,2}}+\|\phi\|^2_{l^2}\right)\|X^\phi\|^2_{H^{0,1}}ds.
\end{align*}

By Gronwall's inequality we have
\begin{align*}
&\|X^\phi(t)\|^2_{H^{0,1}}+\int^t_0\|X^\phi(s)\|^2_{H^{1,1}}ds\\
\leqslant &C\exp\left(\int^t_0 \left(1+\|u^0\|^2_{H^{1,2}}+\|\phi\|^2_{l^2}\right)ds\right)\leqslant C,
\end{align*}
where we used Lemma \ref{wellposedness result for u0 in H02}. 

The existence results will be given by  compactness arguments (see \cite[Theorem 3.1]{LZZ18}).  We put them in the following for the use  in the proof of next lemma.

Consider the  approximate equation:
\begin{equation}\label{approxi. eq. MDP}
\left\{
             \begin{aligned}
             &dX_\epsilon^\phi(t)=\partial^2_1 X_\epsilon^\phi(t)dt+\epsilon^2\partial_2^2X_\epsilon^\phi(t)dt-B(X_\epsilon^\phi,u^0)dt-B(u^0, X_\epsilon^\phi)dt+\sigma(t, u^0(t))\phi(t)dt,\\
             &X_\epsilon^\phi(0)=0.
             \end{aligned}
\right.
\end{equation}

It follows from classical theory on Navier-Stokes system that (\ref{approxi. eq. MDP}) has a unique global smooth solution $z^\phi_\epsilon$ for any fixed $\epsilon$.  Furthermore,  we have
\begin{align*}
\|X_\epsilon^\phi(t)\|^2_{H^{0,1}}+\int^t_0\|X_\epsilon^\phi(s)\|^2_{H^{1,1}}ds
\leqslant  C.
\end{align*}

Then  $\{X_\epsilon^\phi\}_{\epsilon>0}$ is uniformly bounded in  $L^\infty([0,T], \tilde{H}^{0,1})\bigcap L^2([0,T], \tilde{H}^{1,1})$, hence bounded in $L^4([0,T],H^{\frac{1}{2}})$ (by interpolation) and $L^4([0,T], L^4(\mathbb{T}^2))$ (by Sobolev embedding). Thus $B(X^\phi_\epsilon, u^0)$ and $B(u^0, X^\phi_\epsilon)$ are uniformly bounded in $L^2([0,T],H^{-1})$. Let $p\in(1,\frac{4}{3})$, we have
\begin{align*}
\int^T_0\|\sigma(s,u^0(s))\phi(s)\|^p_{H^{-1}}ds\leqslant &\int^T_0\|\sigma(s,u^0(s))\|^p_{L_2(l^2,H^{-1})}\|\phi(s)\|^p_{l^2}ds\\
\leqslant &C\int^T_0(1+\|\sigma(s,u^0(s))\|^4_{L_2(l^2,H^{-1})}+\|\phi(s)\|^2_{l^2})ds\\
\leqslant&C\int^T_0(1+\|u^0(s))\|^4_{H}+\|\phi(s)\|^2_{l^2})ds<\infty,
\end{align*}
where we used Young's inequality in the second line and (A0) in the third line.
It comes out that 
\begin{equation}\label{bded in Lp(H-1) MDP}
\{\partial_t X_\epsilon^\phi\}_{\epsilon>0}\text{  is uniformly bounded in } L^p([0,T],H^{-1}).\end{equation}
 Thus by  Aubin-Lions lemma (see \cite[Lemma 3.6]{LZZ18}),  there exists a $X^\phi\in L^2([0,T], {H})$ such that 
$$X_\epsilon^\phi\rightarrow X^\phi \text{ strongly in }L^2([0,T],H)\text{ as }\epsilon\rightarrow 0 \text{ (in the sense of subsequence)}.$$

Since $\{X_\epsilon^\phi\}_{\epsilon>0}$ is uniformly bounded in  $L^\infty([0,T], \tilde{H}^{0,1})\bigcap L^2([0,T], \tilde{H}^{1,1})$, there exists a $\tilde{X}\in L^\infty([0,T], \tilde{H}^{0,1})\bigcap L^2([0,T], \tilde{H}^{1,1})$ such that
$$X_\epsilon^\phi\rightarrow \tilde{X} \text{ weakly in }L^2([0,T],\tilde{H}^{1,1})\text{ as }\epsilon\rightarrow 0 \text{ (in the sense of subsequence)}.$$
$$X_\epsilon^\phi\rightarrow \tilde{X} \text{ weakly star in }L^\infty([0,T],\tilde{H}^{0,1})\text{ as }\epsilon\rightarrow 0 \text{ (in the sense of subsequence)}.$$
By the uniqueness of weak convergence limit, we deduce that $X^\phi=\tilde{X}$. 
By (\ref{bded in Lp(H-1) MDP}) and \cite[Theorem 2.2]{FG95}, we also have  for any $\delta>0$
$$X_\epsilon^\phi\rightarrow X^\phi \text{ strongly in }C([0,T],H^{-1-\delta})\text{ as }\epsilon\rightarrow 0 \text{ (in the sense of subsequence)}.$$

Now we use the above convergence  to prove that $X^\phi$ is a solution to (\ref{skeleton eq. MDP}). Note that for any $\varphi\in C^\infty([0,T]\times \mathbb{T}^2)$ with $\text{div} \varphi=0$, for any $t\in[0,T]$, $z_\epsilon^{\phi}$  satisfies
\begin{equation}\label{eq of approximate solution MDP}\aligned
&\langle X^\phi_\epsilon(t), \varphi(t)\rangle \\
=&\int^{t}_0\langle X^\phi_\epsilon, \partial_t\varphi\rangle-\langle \partial_1X^\phi_\epsilon,\partial_1 \varphi\rangle-\epsilon^2\langle \partial_2X^\phi_\epsilon,\partial_2 \varphi\rangle+\langle- B(X^\phi_\epsilon, u^0)-B(u^0, X^\phi_\epsilon)+\sigma(s, u^0)\phi, \varphi\rangle ds.
\endaligned
\end{equation}

Let $\epsilon\rightarrow 0$ in (\ref{eq of approximate solution MDP}),  we have $X^\phi\in L^\infty([0,T], \tilde{H}^{0,1})\bigcap L^2([0,T], \tilde{H}^{1,1})$ and
$$\partial_t X^\phi=\partial_1^2X^\phi-B(X^\phi,u^0)-B(u^0,X^\phi)+\sigma(t,u^0(t))\phi.$$
Since the right hand side belongs to $L^p([0,T],H^{-1})$, we deduce that $$X^\phi\in L^\infty([0,T], \tilde{H}^{0,1})\bigcap L^2([0,T], \tilde{H}^{1,1})\bigcap C([0,T],H^{-1}).$$ 

The uniqueness part is exactly the same as in Lemma \ref{wellposedness limit eq. of CLT}.

\end{proof}

The following Lemma shows that $I$ is a good rate function. The proof follows essentially the same argument as in \cite[Proposition 4.5]{WZZ}.
\begin{lemma}\label{good rate function MDP}
Assume (A0)-(A2) hold. For all $N<\infty$, the set 
$$K_N=\left\{g^0\left(\int^\cdot_0 \phi(s)ds\right): \phi\in S_N\right\}$$
is a compact subset in $L^\infty([0,T], H)\bigcap L^2([0,T], \tilde{H}^{1,0})\bigcap C([0,T],H^{-1})$.
\end{lemma}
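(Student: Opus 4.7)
The plan is to establish sequential compactness: given any sequence $(\phi_n)\subset S_N$, we extract a subsequence whose image under $g^0$ converges in $E:=L^\infty([0,T],H)\cap L^2([0,T],\tilde{H}^{1,0})\cap C([0,T],H^{-1})$. Since $S_N$ is a bounded, closed, convex subset of the Hilbert space $L^2([0,T],l^2)$, it is metrizable and compact in the weak topology, so we may pass to a subsequence with $\phi_n\rightharpoonup\phi$ weakly in $L^2([0,T],l^2)$ with $\phi\in S_N$. Write $X^n:=X^{\phi_n}$ and $X:=X^\phi$.

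Next I collect the uniform a priori bounds. From the $\tilde{H}^{0,1}$ energy argument in Proposition \ref{wellposedness for skeleton eq. MDP}, the sequence $\{X^n\}$ is bounded in $L^\infty([0,T],\tilde{H}^{0,1})\cap L^2([0,T],\tilde{H}^{1,1})$, hence (by interpolation/Sobolev embedding) in $L^4([0,T],L^4)$, so $B(X^n,u^0)+B(u^0,X^n)$ is bounded in $L^2([0,T],H^{-1})$. The growth condition (A0) together with $\phi_n\in S_N$ also controls $\sigma(\cdot,u^0)\phi_n$ in $L^p([0,T],H^{-1})$ for some $p\in(1,4/3)$, and therefore $\{\partial_t X^n\}$ is uniformly bounded in $L^p([0,T],H^{-1})$. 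Aubin--Lions then yields a further subsequence with $X^n\to Y$ strongly in $L^2([0,T],H)$ and in $C([0,T],H^{-1-\delta})$, weakly in $L^2([0,T],\tilde{H}^{1,1})$, and weakly-$*$ in $L^\infty([0,T],\tilde{H}^{0,1})$. To identify $Y=X$ I pass to the limit in the weak formulation tested against divergence-free $\varphi\in C^\infty$: the linear terms $B(X^n,u^0),B(u^0,X^n)$ converge by the strong $L^2_t H$ convergence, and the forcing term is handled by rewriting
\[
\int_0^t\langle\sigma(s,u^0(s))\phi_n(s),\varphi\rangle\,ds=\int_0^t\langle\phi_n(s),\sigma(s,u^0(s))^*\varphi\rangle_{l^2}\,ds,
\]
with $s\mapsto\sigma(s,u^0(s))^*\varphi\in L^2([0,T],l^2)$ by (A0), so the weak convergence $\phi_n\rightharpoonup\phi$ yields the limit. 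By uniqueness in Proposition \ref{wellposedness for skeleton eq. MDP}, $Y=X$.

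It remains to upgrade to convergence in $E$. Setting $w^n:=X^n-X$ and applying the energy identity to $\|w^n\|_H^2$ gives
\[
\|w^n(t)\|_H^2+2\int_0^t\|\partial_1 w^n\|_H^2\,ds=-2\int_0^t b(w^n,u^0,w^n)\,ds+2\int_0^t\langle\sigma(s,u^0)(\phi_n-\phi),w^n\rangle\,ds,
\]
since $b(w^n,w^n,u^0)+b(u^0,w^n,w^n)=0$ by the standard antisymmetry. Lemma \ref{b(u,v,w)} bounds $|b(w^n,u^0,w^n)|\le\tfrac12\|\partial_1 w^n\|_H^2+C(1+\|u^0\|_{\tilde{H}^{1,1}}^2)\|w^n\|_H^2$. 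For the forcing cross-term I split
\[
\int_0^t\langle\sigma(s,u^0)(\phi_n-\phi),w^n\rangle\,ds=\int_0^t\langle\sigma(s,u^0)(\phi_n-\phi),X\rangle\,ds+\int_0^t\langle\sigma(s,u^0)(\phi_n-\phi),X^n-X\rangle\,ds.
\]
The first piece tends to $0$ by the weak convergence $\phi_n\rightharpoonup\phi$ (since $\sigma(\cdot,u^0)^*X\in L^2([0,T],l^2)$). The second piece is bounded by Cauchy--Schwarz via $\|\sigma(\cdot,u^0)(\phi_n-\phi)\|_{L^2_tH}\le C\sqrt{N}$ (using (A1) and Lemma \ref{estimates from LZZ18}) multiplied by $\|X^n-X\|_{L^2_tH}\to0$. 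Inserting these in the energy inequality and applying Gronwall with the integrable factor $1+\|u^0\|_{\tilde{H}^{1,1}}^2$ gives $\sup_{t\in[0,T]}\|w^n(t)\|_H^2+\int_0^T\|w^n\|_{\tilde{H}^{1,0}}^2\,ds\to0$, which is convergence in $L^\infty([0,T],H)\cap L^2([0,T],\tilde{H}^{1,0})$. Combined with the already-established $C([0,T],H^{-1-\delta})$ convergence and the uniform bound on $\partial_t X^n$ in $L^p([0,T],H^{-1})$, a standard interpolation/equicontinuity argument gives convergence in $C([0,T],H^{-1})$ as well.

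The main obstacle is the mixed term $\int_0^t\langle\sigma(s,u^0)(\phi_n-\phi),w^n\rangle\,ds$: $\phi_n-\phi$ only converges weakly in $L^2_tl^2$ while $w^n$ also varies with $n$. The splitting above is what closes this difficulty, leveraging the strong $L^2_tH$ compactness extracted via Aubin--Lions as a bootstrap.
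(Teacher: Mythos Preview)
Your overall architecture (weak compactness of $S_N$, Aubin--Lions to get strong $L^2_tH$ convergence, identifying the limit via the weak formulation, then an $H$-energy estimate on $w^n$ to upgrade) is exactly the paper's. Two points need attention.

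First, the displayed ``splitting'' is not an identity: since $w^n=X^n-X$, the right-hand side equals $\int_0^t\langle\sigma(s,u^0)(\phi_n-\phi),X^n\rangle\,ds$, not $\int_0^t\langle\sigma(s,u^0)(\phi_n-\phi),w^n\rangle\,ds$. Presumably this is a slip and you mean to bound the whole cross-term by your ``second piece'' estimate.

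Second, and this is the real gap: the bound $\|\sigma(\cdot,u^0)(\phi_n-\phi)\|_{L^2_tH}\le C\sqrt{N}$ is not justified. By (A1),
\[
\|\sigma(s,u^0)\|_{L_2(l^2,H)}^2\le K_0+K_1\|u^0(s)\|_H^2+K_2\|\partial_1 u^0(s)\|_H^2,
\]
and under the available regularity $\|\partial_1 u^0\|_H^2$ is only in $L^1(0,T)$, not $L^\infty(0,T)$. Hence $\|\sigma(s,u^0)\|_{L_2(l^2,H)}$ is not uniformly bounded in $s$, and you cannot control $\int_0^T\|\sigma(s,u^0)\|_{L_2(l^2,H)}^2\|\phi_n(s)-\phi(s)\|_{l^2}^2\,ds$ from $\phi_n-\phi\in S_{4N}$ alone (a product of two merely $L^1_t$ functions). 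Your Cauchy--Schwarz pairing therefore does not yield $C\sqrt{N}\,\|w^n\|_{L^2_tH}$.

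The paper closes this differently: it pairs $\|\sigma\|\,\|w^n\|_H$ together against $\|\phi_n-\phi\|_{l^2}$, obtaining
\[
C\sqrt{N}\Bigl(\int_0^T(1+\|u^0\|_H^2+\|\partial_1 u^0\|_H^2)\|w^n(s)\|_H^2\,ds\Bigr)^{1/2},
\]
and then uses the uniform bound $\sup_\varepsilon\sup_t\|X^{\phi_n}\|_H<\infty$ (from Proposition~\ref{wellposedness for skeleton eq. MDP}) together with an $A_\epsilon=\{s:\|w^n(s)\|_H>\epsilon\}$ decomposition: on $[0,T]\setminus A_\epsilon$ the integrand is $\le C\epsilon^2$, while $\mathrm{Leb}(A_\epsilon)\to0$ (from $\|w^n\|_{L^2_tH}\to0$) and absolute continuity of the $L^1_t$ weight takes care of $A_\epsilon$. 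The same device handles $\int_0^T(1+\|u^0\|_{\tilde H^{1,1}}^2)\|w^n\|_H^2\,ds$, so no Gr\"onwall step is actually needed. Replacing your forcing estimate by this argument fixes the proof.
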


\begin{proof}
By definition, we have
$$K_N=\left\{X^\phi: \phi\in L^2([0,T], l^2), \text{ }\int^T_0\|\phi(s)\|^2_{l^2}ds\leqslant N\right\}.$$

Let $\{X^{\phi_n}\}$ be a sequence in $K_N$ where $\{\phi_n\}\subset S_N$. Note that  $X^{\phi_n}$ is uniformly bounded in $L^\infty([0,T], H^{1,0})\cap L^2([0,T], H^{1,1})$. Thus  by weak compactness of $S_N$,  a similar argument as in the proof of Lemma \ref{wellposedness for skeleton eq. MDP} shows that there exists $\phi\in\mathcal{S}_N$ and $X'\in L^2([0,T],H)$ such that  the following convergence hold as $n\rightarrow \infty$ (in the sense of subsequence):  

$\phi_n\rightarrow \phi$ in $\mathcal{S}_N$ weakly,

$X^{\phi_n}\rightarrow X'$ in $L^2([0,T], H^{1,0})$ weakly,

$X^{\phi_n}\rightarrow X'$ in $L^\infty([0,T], H)$ weak-star,

$X^{\phi_n}\rightarrow X'$ in $L^2([0,T], H)$ strongly.

$X^{\phi_n}\rightarrow X'$ in $C([0,T], H^{-1-\delta})$ strongly for any $\delta>0$.

Then for any $\varphi\in C^\infty([0,T]\times \mathbb{T}^2)$ with $\text{div} \varphi=0$ and for any $t\in[0,T]$, $X^{\phi_n}$  satisfies
\begin{equation}\label{eq in good rate function}\aligned
&\langle X^{\phi_n}(t), \varphi(t)\rangle=\langle u_0,\varphi(0)\rangle\\
&+\int^{t}_0\langle X^{\phi_n}, \partial_t\varphi\rangle-\langle \partial_1X^{\phi_n},\partial_1 \varphi\rangle+\langle- B(X^{\phi_n},u^0)-B(u^0, X^{\phi_n})+\sigma(s, u^0)\phi_n, \varphi\rangle ds.
\endaligned
\end{equation}

Let $n\rightarrow \infty$, we deduce  that  $X'$ is a solution to (\ref{skeleton eq. MDP}). By the uniqueness of solution, we deduce that $X'=X^\phi$.

Our goal is to prove $X^{\phi_n}\rightarrow X^\phi$  in $L^\infty([0,T], H)\bigcap L^2([0,T], \tilde{H}^{1,0})\bigcap C([0,T],H^{-1})$. 

Let $w^n=X^{\phi_n}-X^\phi$, by a direct calculation, we have
\begin{align*}
&\|w^n(t)\|^2_H+2\int^t_0\|\partial_1w^n(s)\|^2_Hds\\
=&-2\int^t_0\langle w^n(s), B(X^{\phi_n}(s)-X^\phi(s), u^0(s))\rangle ds\\
&-2\int^t_0\langle w^n(s), B(u^0(s), X^{\phi_n}(s)-X^\phi(s))\rangle ds\\
&+2\int^t_0\langle w^n(s), \sigma(s, u^0(s))(\phi_n(s)-\phi(s))\rangle ds\\
\leqslant &2\int^t_0 |b(w^n, u^0, w^n)(s)|ds+2\int^t_0|\langle w^n(s), \sigma(s, u^0(s))(\phi_n(s)-\phi(s))\rangle| ds\\
\leqslant & \int^t_0 \|\partial_1w^n(s)\|^2_H+C(1+\|u^0(s)\|^2_{\tilde{H}^{1,1}})\|w^n(s)\|^2_Hds\\
&+C\int^t_0\|w^n(s)\|_H\|\phi_n(s)-\phi(s)\|_{l^2}(1+\|u^0(s)\|^2_H+\|\partial_1 u^0(s)\|^2_H )^\frac{1}{2} ds,
\end{align*}
where we used Lemma \ref{b(u,v,w)} and (A1) in the last inequality.

Note that $\phi_n$, $\phi$ are in $\mathcal{S}_N$, we have
\begin{align*}
&\|w^n(t)\|^2_H+\int^t_0\|\partial_1w^n(s)\|^2_Hds\\
\leqslant & C\int^t_0(1+\|u^0(s)\|^2_{\tilde{H}^{1,1}})\|w^n(s)\|^2_Hds\\
&+C\left(\int^t_0\|w^n(s)\|^2_H(1+\|u^0(s)\|^2_H+\|\partial_1 u^0(s)\|^2_H ) ds\right)^\frac{1}{2}\left(\int^t_0\|\phi_n(s)-\phi(s)\|_{l^2}^2\right)^\frac{1}{2}\\
\leqslant & C\int^t_0(1+\|u^0(s)\|^2_{\tilde{H}^{1,1}})\|w^n(s)\|^2_Hds\\
&+C\sqrt{N}\left(\int^t_0\|w^n(s)\|^2_H(1+\|u^0(s)\|^2_H+\|\partial_1 u^0(s)\|^2_H ) ds\right)^\frac{1}{2}.
\end{align*}

For any $\epsilon>0$, let 
$$A_\epsilon:=\{s\in[0,T]; \|w^n(s)\|_H>\epsilon\}.$$
Since $X^{\phi_n}\rightarrow X^\phi$ in $L^2([0,T], H)$ strongly, we have
$$\int^T_0\|w^n(s)\|^2_Hds\rightarrow 0,\text{ as }n\rightarrow\infty$$
and $\lim_{n\rightarrow\infty}Leb(A_\epsilon)=0$, where $Leb(B)$ means the Lebesgue measure of $B\in\mathcal{B}(\mathbb{R})$.  Thus we have
\begin{align*}
&\int^T_0(1+\|u^0(s)\|^2_{\tilde{H}^{1,1}})\|w^n(s)\|^2_Hds\\
\leqslant&\left(\int_{ A_\epsilon}+\int_{[0,T]\setminus A_\epsilon}\right)(1+\|u^0(s)\|^2_{\tilde{H}^{1,1}})\|w^n(s)\|^2_Hds\\
\leqslant& C\epsilon+ 2\int_{A_\epsilon}(1+\|u^0(s)\|^2_{\tilde{H}^{1,1}})(\|X^{\phi_n}(s)\|^2_H+\|X^\phi(s)\|^2_H)ds\\
\leqslant&C\epsilon+ C\int_{A_\epsilon}(1+\|u^0(s)\|^2_{\tilde{H}^{1,1}})ds\\
\rightarrow & \text{ }C\epsilon\text{ as } n\rightarrow \infty,
\end{align*}
where we used Lemma \ref{estimates from LZZ18} in the last line. A  similar argument also implies that 
\begin{align*}
\int^T_0 (1+\|u^0(s)\|^2_H+\|\partial_1u^0(s)\|^2_H)\|w^n(s)\|^2_Hds\leqslant C\epsilon.
\end{align*}
Hence we have
\begin{align*}
\sup_{t\in[0,T]}\|w^n(t)\|^2_H+\int^T_0\|\partial_1w^n(s)\|^2_Hds\leqslant C\epsilon+C\sqrt{\epsilon} \text{ as }n\rightarrow \infty.
\end{align*}

Since $\epsilon$ is arbitrary, we obtain that 
$$X^{\phi^n}\rightarrow X^\phi\text{ strongly in }L^\infty([0,T], H)\bigcap L^2([0,T], \tilde{H}^{1,0})\bigcap C([0,T],H^{-1}).$$

\end{proof}

The next step is to check Hypothesis 1.  To this end, recall that $Z^\varepsilon=\frac{u^\varepsilon-u^0}{\sqrt{\varepsilon}\lambda(\varepsilon)}$, then
\begin{equation}\label{MDP eq.}\aligned
dZ^\varepsilon(t)&=\partial^2_1 Z^\varepsilon(t)dt-B(Z^\varepsilon(t), u^0(t)+\sqrt{\varepsilon}\lambda(\varepsilon)Z^\varepsilon(t))dt-B(u^0(t), Z^\varepsilon(t))dt\\
&+\lambda^{-1}(\varepsilon)\sigma(t, u^0(t)+\sqrt{\varepsilon}\lambda(\varepsilon)Z^\varepsilon(t))dW(t),
\endaligned
\end{equation}
with initial value $Z^\varepsilon(0)=0$. The uniqueness of solution to \eqref{MDP eq.} is very similar to that of \eqref{equation after projection}.  Then it follows from Yamada-Watanabe theorem (See \cite[Appendix E]{LR15}) that there exists a Borel-measurable function
 $$g^\varepsilon: C([0,T],U)\rightarrow L^\infty([0,T], H)\bigcap L^2([0,T], \tilde{H}^{1,0})\bigcap C([0,T],H^{-1})$$
such that $Z^\varepsilon=g^\varepsilon(W)$ a.s..   

Now consider the following equation:
\begin{equation}\label{eq. for weak convergence MDP}\aligned
dX^\varepsilon(t)&=\partial^2_1 X^\varepsilon(t)dt-B(X^\varepsilon(t), u^0(t)+\sqrt{\varepsilon}\lambda(\varepsilon)X^\varepsilon(t))dt-B(u^0(t), X^\varepsilon(t))dt\\
&+\sigma(t, u^0(t)+\sqrt{\varepsilon}\lambda(\varepsilon)X^\varepsilon(t))v^\varepsilon(t)dt+\lambda^{-1}(\varepsilon)\sigma(t, u^0(t)+\sqrt{\varepsilon}\lambda(\varepsilon)X^\varepsilon(t))dW(t),\\
 X^\varepsilon(0)&=0,
\endaligned
\end{equation}
where $v^\varepsilon\in\mathcal{A}_N$ for some $N<\infty$.
Here $X^\varepsilon$ should have been denoted $X^\varepsilon_{v^\varepsilon}$ and the slight abuse of notation is for simplicity. 

\begin{lemma}\label{Girsanov thm to prove existence MDP}
Assume (A0)-(A3) hold with $K_2<\frac{2}{21}, \tilde{K}_2<\frac{1}{5}, L_2<\frac{1}{5}$ and $v^\varepsilon\in\mathcal{A}_N$ for some $N<\infty$. Then $X^\varepsilon=g^\varepsilon\left(W(\cdot)+\lambda(\varepsilon)\int^\cdot_0v^\varepsilon(s)ds\right)$ is the unique strong solution to (\ref{eq. for weak convergence MDP}).
\end{lemma}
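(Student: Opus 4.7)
The plan is to realize $X^\varepsilon$ as the image under $g^\varepsilon$ of a suitably shifted Wiener process, using Girsanov's theorem to absorb the extra drift $\sigma(\cdot,\cdot)v^\varepsilon$.

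First I would introduce the exponential
$$\mathcal{E}^\varepsilon(t):=\exp\left(-\lambda(\varepsilon)\int_0^t\langle v^\varepsilon(s),dW(s)\rangle_{l^2}-\frac{\lambda^2(\varepsilon)}{2}\int_0^t\|v^\varepsilon(s)\|_{l^2}^2\,ds\right).$$
Because $v^\varepsilon\in\mathcal{A}_N$ we have $\int_0^T\|v^\varepsilon(s)\|_{l^2}^2\,ds\leqslant N$ $P$-a.s., so Novikov's condition is trivially fulfilled and $\mathcal{E}^\varepsilon$ is a true $P$-martingale. Define $Q^\varepsilon$ on $(\Omega,\mathcal{F}_T)$ by $dQ^\varepsilon/dP=\mathcal{E}^\varepsilon(T)$. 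By Girsanov's theorem, the process $\tilde W(t):=W(t)+\lambda(\varepsilon)\int_0^t v^\varepsilon(s)\,ds$ is a cylindrical Wiener process on $l^2$ under $Q^\varepsilon$, and $P\sim Q^\varepsilon$ since $\mathcal{E}^\varepsilon(T)>0$ a.s.

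Next, working on the stochastic basis $(\Omega,\mathcal{F},\{\mathcal{F}_t\},Q^\varepsilon)$ with the $Q^\varepsilon$-Wiener process $\tilde W$, the measurable map $g^\varepsilon$ constructed from the Yamada--Watanabe theorem yields $\tilde Z^\varepsilon:=g^\varepsilon(\tilde W)$ as the unique strong solution of (\ref{MDP eq.}) in which $W$ is replaced by $\tilde W$. Substituting $d\tilde W(t)=dW(t)+\lambda(\varepsilon)v^\varepsilon(t)\,dt$ in that equation produces
$$d\tilde Z^\varepsilon=\partial_1^2\tilde Z^\varepsilon dt-B(\tilde Z^\varepsilon,u^0+\sqrt\varepsilon\lambda(\varepsilon)\tilde Z^\varepsilon)dt-B(u^0,\tilde Z^\varepsilon)dt+\sigma(\cdot,u^0+\sqrt\varepsilon\lambda(\varepsilon)\tilde Z^\varepsilon)v^\varepsilon dt+\lambda^{-1}(\varepsilon)\sigma(\cdot,u^0+\sqrt\varepsilon\lambda(\varepsilon)\tilde Z^\varepsilon)dW,$$
which is exactly (\ref{eq. for weak convergence MDP}) with $X^\varepsilon=\tilde Z^\varepsilon=g^\varepsilon(\tilde W)$. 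Because this identity is pathwise and the two measures are equivalent, it also holds $P$-a.s.

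For the uniqueness claim I would argue as for (\ref{equation after projection}): given two solutions $X_1,X_2\in L^\infty([0,T],H)\cap L^2([0,T],\tilde H^{1,0})$, set $w=X_1-X_2$, apply It\^o's formula to $\|w\|_H^2$, use $b(u,v,v)=0$ together with Lemma \ref{b(u,v,w)} to bound the nonlinear contribution by $\tfrac12\|\partial_1 w\|_H^2+C(1+\|u^0\|_{\tilde H^{1,1}}^2+\varepsilon\lambda^2(\varepsilon)\|X_1\|_{\tilde H^{1,0}}^2)\|w\|_H^2$, invoke (A3) for the stochastic integral and its quadratic variation, and treat the extra drift using
$$|\langle\sigma(s,u^0+\sqrt\varepsilon\lambda(\varepsilon)X_1)v^\varepsilon-\sigma(s,u^0+\sqrt\varepsilon\lambda(\varepsilon)X_2)v^\varepsilon,w\rangle|\leqslant C\sqrt\varepsilon\lambda(\varepsilon)\|v^\varepsilon\|_{l^2}\bigl(\|w\|_H^2+\|\partial_1 w\|_H^2\bigr),$$
coming from (A3) and Cauchy--Schwarz. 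A stochastic Gronwall argument, using that $\|v^\varepsilon\|_{l^2}^2$ and $\|u^0\|_{\tilde H^{1,1}}^2$ are in $L^1([0,T])$ P-a.s., then forces $w\equiv0$. The main obstacle I anticipate is simply keeping track of the $\varepsilon,\lambda(\varepsilon)$ factors so that the Gronwall exponent stays almost surely finite; since $v^\varepsilon\in\mathcal{A}_N$ and the a priori bounds for $u^0$ from Lemma \ref{estimates from LZZ18} and Lemma \ref{wellposedness result for u0 in H02} are deterministic, this is controlled.
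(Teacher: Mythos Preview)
Your approach is essentially the paper's: Girsanov to pass to an equivalent measure under which the shifted process $\tilde W$ is cylindrical Wiener, then pathwise uniqueness via an $H$-energy estimate combined with Yamada--Watanabe. The paper organizes the uniqueness step slightly differently, applying It\^o's formula directly to the weighted quantity $e^{-q(t)}\|w(t)\|_H^2$ with $q(t)=k\int_0^t(\|u^0+\sqrt\varepsilon\lambda(\varepsilon)X^\varepsilon\|_{\tilde H^{1,1}}^2+\|v^\varepsilon\|_{l^2}^2)\,ds$ so that the random Gronwall coefficient is absorbed before taking expectations and BDG; note also that Lemma~\ref{b(u,v,w)} applied to $b(w,u^0+\sqrt\varepsilon\lambda(\varepsilon)X_1,w)$ forces $\|X_1\|_{\tilde H^{1,1}}$ (not $\tilde H^{1,0}$) into that coefficient, so uniqueness is established within the class carrying $L^2([0,T],\tilde H^{1,1})$ regularity.
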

\begin{proof}
Since $v^\varepsilon\in \mathcal{A}_N$,  by the Girsanov theorem (see \cite[Appendix I]{LR15}), $\tilde{W}(\cdot):=W(\cdot)+\lambda(\varepsilon)\int^\cdot_0v^\varepsilon(s)ds$ is an $l^2$-cylindrical Wiener-process under the probability measure
$$d\tilde{P}:=\exp\left\{-\lambda(\varepsilon)\int^T_0v^\varepsilon(s)dW(s)-\frac{1}{2}\lambda^2(\varepsilon)\int^T_0\|v^\varepsilon(s)\|^2_{l^2}ds\right\}dP.$$
Then $(X^\varepsilon, \tilde{W})$ is the solution to (\ref{MDP eq.}) on the stochastic basis $(\Omega, \mathcal{F}, \tilde{P})$. 
Thus $(X^\varepsilon, W)$ satisfies the condition of the definition of weak solution (see \cite[Definition 4.1]{LZZ18}) and hence is a weak solution to (\ref{eq. for weak convergence MDP}) on the stochastic basis $(\Omega, \mathcal{F}, {P})$ and $X^\varepsilon=g^\varepsilon\left(W(\cdot)+\lambda(\varepsilon)\int^\cdot_0v^\varepsilon(s)ds\right)$.

If $\tilde{X^\varepsilon}$ and $X^\varepsilon$ are two weak solutions to  (\ref{eq. for weak convergence MDP}) on the same stochastic basis $(\Omega, \mathcal{F}, {P})$.  Let $W^\varepsilon=X^\varepsilon-\tilde{X^\varepsilon}$ and $q(t)=k\int^t_0(\|u^0+\sqrt{\varepsilon}\lambda(\varepsilon)X^\varepsilon(s)\|^2_{\tilde{H}^{1,1}}+\|v^\varepsilon(s)\|^2_{l^2})ds$ for some constant $k$. Applying It\^o's formula to $e^{-q(t)}\|W^\varepsilon(t)\|^2_H$, we have 
\begin{align*}
&e^{-q(t)}\|W^\varepsilon(t)\|^2_H+2\int^t_0e^{-q(s)}\|\partial_1 W^\varepsilon(s)\|^2_Hds\\
=&-k\int^t_0e^{-q(s)}\|W^\varepsilon(s)\|^2_H(\|u^0+\sqrt{\varepsilon}\lambda(\varepsilon)X^\varepsilon(s)\|^2_{\tilde{H}^{1,1}}+\|v^\varepsilon(s)\|^2_{l^2})ds\\
&-2\int^t_0e^{-q(s)}b(W^\varepsilon, u^0+\sqrt{\varepsilon}\lambda(\varepsilon)X^\varepsilon, W^\varepsilon)ds\\
&+2\int^t_0e^{-q(s)}\langle \sigma(s, u^0+\sqrt{\varepsilon}\lambda(\varepsilon) X^\varepsilon)v^\varepsilon-\sigma(s, u^0+\sqrt{\varepsilon}\lambda(\varepsilon)\tilde{X}^\varepsilon)v^\varepsilon, W^\varepsilon(s)\rangle ds\\
&+2\lambda^{-1}(\varepsilon)\int^t_0e^{-q(s)}\langle W^\varepsilon(s), (\sigma(s,u^0+\sqrt{\varepsilon}\lambda(\varepsilon)X^\varepsilon)-\sigma(s, u^0+\sqrt{\varepsilon}\lambda(\varepsilon)\tilde{X}^\varepsilon))dW(s)\rangle\\
&+\lambda^{-2}(\varepsilon)\int^t_0e^{-q(s)}\|\sigma(s,u^0+\sqrt{\varepsilon}\lambda(\varepsilon)X^\varepsilon)-\sigma(s, u^0+\sqrt{\varepsilon}\lambda(\varepsilon)\tilde{X}^\varepsilon)\|^2_{L_2(l^2,H)}ds.
\end{align*}
By Lemma \ref{b(u,v,w)}, there exists constants $\tilde{\alpha}\in(0,1)$ and $\tilde{C}$ such that 
$$|b(W^\varepsilon, u^0+\sqrt{\varepsilon}\lambda(\varepsilon)X^\varepsilon, W^\varepsilon)|\leqslant \tilde{\alpha}\|\partial_1 W^\varepsilon\|^2_H+\tilde{C}(1+\|u^0+\sqrt{\varepsilon}\lambda(\varepsilon)X^\varepsilon\|^2_{\tilde{H}^{1,1}})\|W^\varepsilon\|^2_H.$$
We also have 
\begin{align*}
&2|\langle \sigma(s,u^0+\sqrt{\varepsilon}\lambda(\varepsilon)X^\varepsilon)v^\varepsilon-\sigma(s, u^0+\sqrt{\varepsilon}\lambda(\varepsilon)\tilde{X}^\varepsilon)v^\varepsilon, W^\varepsilon\rangle|\\
\leqslant &2\|(\sigma(s,u^0+\sqrt{\varepsilon}\lambda(\varepsilon)X^\varepsilon)-\sigma(s, u^0+\sqrt{\varepsilon}\lambda(\varepsilon)\tilde{X}^\varepsilon))v^\varepsilon\|_H\|W^\varepsilon\|_H\\
\leqslant &\|\sigma(s,u^0+\sqrt{\varepsilon}\lambda(\varepsilon)X^\varepsilon)-\sigma(s, u^0+\sqrt{\varepsilon}\lambda(\varepsilon)\tilde{X}^\varepsilon)\|^2_{L_2(l^2,H)}+\|v^\varepsilon\|^2_{l^2}\|W^\varepsilon\|^2_H.
\end{align*}

By (A3), we have
\begin{align*}
&\|\sigma(s,u^0+\sqrt{\varepsilon}\lambda(\varepsilon)X^\varepsilon)-\sigma(s, u^0+\sqrt{\varepsilon}\lambda(\varepsilon)\tilde{X}^\varepsilon)\|^2_{L_2(l^2,H)}\\
\leqslant & \sqrt{\varepsilon}\lambda(\varepsilon) (L_1\|W^\varepsilon\|^2_H+L_2\|\partial_1 W^\varepsilon\|^2_{{H}}).
\end{align*}

By the Burkh\"older-Davis-Gundy's inequality (see \cite[Appendix D]{LR15}), we have 
\begin{align*}
&2\lambda^{-1}(\varepsilon)|E[\sup_{r\in[0,t]}\int^r_0e^{-q(s)}\langle W^\varepsilon(s), (\sigma(s,u^0+\sqrt{\varepsilon}\lambda(\varepsilon)X^\varepsilon)-\sigma(s, u^0+\sqrt{\varepsilon}\lambda(\varepsilon)\tilde{X}^\varepsilon))dW(s)\rangle]|\\
\leqslant &6\lambda^{-1}(\varepsilon) E\left(\int^t_0e^{-2q(s)}\|\sigma(s,u^0+\sqrt{\varepsilon}\lambda(\varepsilon)X^\varepsilon)-\sigma(s, u^0+\sqrt{\varepsilon}\lambda(\varepsilon)\tilde{X}^\varepsilon)\|^2_{L_2(l^2,H)}\|W^\varepsilon(s)\|^2_Hds\right)^\frac{1}{2}\\
\leqslant& \sqrt{\varepsilon}E(\sup_{s\in[0,t]}(e^{-q(s)}\|W^\varepsilon(s)\|^2_H))+9\sqrt{\varepsilon} E\int^t_0e^{-q(s)}(L_1\|W^\varepsilon(s)\|^2_H+L_2\|\partial_1 W^\varepsilon(s)\|^2_H)ds,
\end{align*}
where we used (A3).

Let $k>2\tilde{C}$ and we may  assume $\sqrt{\varepsilon}\lambda(\varepsilon)<1$,  by (A3)  we have 
\begin{align*}
&e^{-q(t)}\|W^\varepsilon(t)\|^2_H+(2-2\tilde{\alpha}-L_2\varepsilon\lambda^2(\varepsilon))\int^t_0e^{-q(s)}\|\partial_1 W^\varepsilon(s)\|^2_Hds\\
\leqslant& C\int^t_0e^{-q(s)}\|W^\varepsilon(s)\|^2_Hds\\
&+2\lambda^{-1}(\varepsilon)\int^t_0e^{-q(s)}\langle W^\varepsilon(s),  (\sigma(s,u^0+\sqrt{\varepsilon}\lambda(\varepsilon)X^\varepsilon)-\sigma(s, u^0+\sqrt{\varepsilon}\lambda(\varepsilon)\tilde{X}^\varepsilon))dW(s)\rangle.
\end{align*}

Let $\varepsilon$ be small enough such that $1-\sqrt{\varepsilon}-L_2\varepsilon\lambda^2(\varepsilon)-9\sqrt{\varepsilon}L_2>0$. Then we have
$$E(\sup_{s\in[0,t]}(e^{-q(s)}\|W^\varepsilon(s)\|^2_H))\leqslant CE\int^t_0e^{-q(s)}\|W^\varepsilon(s)\|^2_Hds.$$
By Gronwall's inequality we obtain $W^\varepsilon=0$ $P$-a.s., i.e. $\tilde{X^\varepsilon}=X^\varepsilon$ $P$-a.s..

 Then by the Yamada-Watanabe theorem,  we have $X^\varepsilon$ is the unique strong solution to (\ref{eq. for weak convergence MDP}).

\end{proof}

\begin{lemma}\label{estimate eq. for weak convergence}
Assume $X^\varepsilon$ is a solution to (\ref{eq. for weak convergence MDP}) with $v^\varepsilon\in\mathcal{A}_N$ and $\varepsilon<1$ small enough. Then we have
\begin{equation}\label{eq01 in lemma estimate eq. for weak convergence MDP}
E(\sup_{t\in[0,T]}\|X^\varepsilon(t)\|^4_H)+E\int^T_0(\|X^\varepsilon(s)\|^2_H+1)\|X^\varepsilon(s)\|^2_{\tilde{H}^{1,0}}ds\leqslant C(N).
\end{equation}

Moreover, there exists $k>0$ such that 
\begin{equation}\label{eq02 in lemma estimate eq. for weak convergence MDP}
E(\sup_{t\in[0,T]}e^{-kg(t)}\|X^\varepsilon(t)\|^2_{\tilde{H}^{0,1}})+E\int^{T}_0e^{-kg(s)}\|X^\varepsilon(s)\|^2_{\tilde{H}^{1,1}}ds\leqslant C(N),
\end{equation}
where $g(t)=\int^t_0\|\partial_1 X^\varepsilon(s)\|^2_{H}ds$ and $C(N)$ is a constant depend on $N$ but independent of $\varepsilon$. 
\end{lemma}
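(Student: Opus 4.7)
For the first bound \eqref{eq01 in lemma estimate eq. for weak convergence MDP}, apply It\^o's formula to $\|X^\varepsilon(t)\|_H^2$. Using the cancellation $b(u,v,v)=0$, the two nonlinear pairings collapse to $-2b(X^\varepsilon, u^0, X^\varepsilon)$, which Lemma~\ref{b(u,v,w)} bounds by $\tfrac{1}{2}\|\partial_1 X^\varepsilon\|_H^2 + C(1+\|u^0\|_{\tilde H^{1,1}}^2)\|X^\varepsilon\|_H^2$. The control drift $\langle\sigma(t, u^0+\sqrt{\varepsilon}\lambda(\varepsilon)X^\varepsilon)v^\varepsilon, X^\varepsilon\rangle$ is handled by Cauchy--Schwarz together with (A1) and $\int_0^T\|v^\varepsilon\|_{l^2}^2 ds\leq N$; the It\^o correction and the BDG contribution from the stochastic integral are made small by the prefactors $\lambda^{-2}(\varepsilon)$ and $\lambda^{-1}(\varepsilon)$, while the $\sqrt{\varepsilon}\lambda(\varepsilon)X^\varepsilon$ dependence inside $\sigma$ produces only terms that can be absorbed into the dissipation once $\sqrt{\varepsilon}\lambda(\varepsilon)$ is small. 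Gronwall combined with $\int_0^T\|u^0\|_{\tilde H^{1,1}}^2 ds\leq C$ from Lemma~\ref{estimates from LZZ18} yields $E\sup_t\|X^\varepsilon\|_H^2 + E\int_0^T\|X^\varepsilon\|_{\tilde H^{1,0}}^2 ds\leq C(N)$. The fourth-moment estimate and the mixed product $E\int_0^T\|X^\varepsilon\|_H^2\|X^\varepsilon\|_{\tilde H^{1,0}}^2 ds$ then follow by applying It\^o to $\|X^\varepsilon\|_H^4$ in the same style as the proof of Lemma~\ref{estimate for Ve}.

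For the second bound \eqref{eq02 in lemma estimate eq. for weak convergence MDP} the weight $e^{-kg(t)}$ is the crucial device. Apply It\^o's formula to $e^{-kg(t)}\|X^\varepsilon(t)\|_{\tilde H^{0,1}}^2$; this reproduces the usual $\tilde H^{0,1}$-energy identity multiplied by $e^{-kg(t)}$, plus the extra absorbing term $-k\,e^{-kg(t)}\|\partial_1 X^\varepsilon\|_H^2\|X^\varepsilon\|_{\tilde H^{0,1}}^2 dt$. The pairings $\langle B(X^\varepsilon, u^0)+B(u^0, X^\varepsilon), X^\varepsilon\rangle_{\tilde H^{0,1}}$ are estimated exactly as in Lemma~\ref{wellposedness limit eq. of CLT}, producing $\alpha\|X^\varepsilon\|_{\tilde H^{1,1}}^2 + C\|u^0\|_{\tilde H^{1,2}}^2\|X^\varepsilon\|_{\tilde H^{0,1}}^2$, whose Gronwall-type forcing is integrable in $t$ by Lemma~\ref{wellposedness result for u0 in H02}, which requires $u_0\in\tilde H^{0,2}$. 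The noise contributions are controlled by (A2) together with the prefactors $\lambda^{-1}(\varepsilon)$ and $\lambda^{-2}(\varepsilon)$ and BDG, and any $\sqrt{\varepsilon}\lambda(\varepsilon)X^\varepsilon$ perturbation inside $\sigma$ is absorbed into the dissipation for small $\varepsilon$.

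The main obstacle is the new cubic drift $-2\sqrt{\varepsilon}\lambda(\varepsilon)\langle B(X^\varepsilon, X^\varepsilon), X^\varepsilon\rangle_{\tilde H^{0,1}}$. Using $b(u,v,v)=0$ it reduces to $-2\sqrt{\varepsilon}\lambda(\varepsilon)\,b(\partial_2 X^\varepsilon, X^\varepsilon, \partial_2 X^\varepsilon)$, which the $\tilde H^{1,1}$-dissipation alone cannot absorb. An anisotropic trilinear estimate from the appendix gives a bound of the form $|b(\partial_2 X^\varepsilon, X^\varepsilon, \partial_2 X^\varepsilon)|\leq \alpha'\|X^\varepsilon\|_{\tilde H^{1,1}}^2 + C\|\partial_1 X^\varepsilon\|_H^2\|X^\varepsilon\|_{\tilde H^{0,1}}^2$, so after multiplication by $\sqrt{\varepsilon}\lambda(\varepsilon)$ and the weight, the nasty residual contribution is $C\varepsilon\lambda^2(\varepsilon)\,e^{-kg(t)}\|\partial_1 X^\varepsilon\|_H^2\|X^\varepsilon\|_{\tilde H^{0,1}}^2$. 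Since $\varepsilon\lambda^2(\varepsilon)=(\sqrt{\varepsilon}\lambda(\varepsilon))^2$ is uniformly bounded (indeed tends to $0$) for small $\varepsilon$, choosing $k$ large enough makes the weight-generated term $-k\,e^{-kg(t)}\|\partial_1 X^\varepsilon\|_H^2\|X^\varepsilon\|_{\tilde H^{0,1}}^2$ dominate it. After this absorption, one is left with a Gronwall inequality with integrable forcing $\|u^0\|_{\tilde H^{1,2}}^2$, which closes the estimate through Lemma~\ref{wellposedness result for u0 in H02}.
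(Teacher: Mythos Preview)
Your plan is correct and matches the paper's proof almost exactly: the same two-step It\^o argument ($\|X^\varepsilon\|_H^2$ then $\|X^\varepsilon\|_H^4$) for \eqref{eq01 in lemma estimate eq. for weak convergence MDP}, and the same weighted It\^o computation on $e^{-kg(t)}\|X^\varepsilon\|_{\tilde H^{0,1}}^2$ for \eqref{eq02 in lemma estimate eq. for weak convergence MDP}, with the cubic term handled via Lemma~\ref{estimate for b with partial_2} and absorbed by the $-k\,e^{-kg}\|\partial_1 X^\varepsilon\|_H^2\|X^\varepsilon\|_{\tilde H^{0,1}}^2$ contribution from the weight.

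Two minor remarks. First, the paper actually augments the weight to $h(t)=kg(t)+\int_0^t\|v^\varepsilon\|_{l^2}^2\,ds$, which directly swallows the $\|v^\varepsilon\|_{l^2}^2\|X^\varepsilon\|_{\tilde H^{0,1}}^2$ term coming from the control drift; at the end one passes back to $e^{-kg(t)}$ using $\int_0^T\|v^\varepsilon\|_{l^2}^2\,ds\le N$. Your choice to leave that term for Gronwall is equally valid. Second, the residual prefactor after multiplying the trilinear bound by $2\sqrt{\varepsilon}\lambda(\varepsilon)$ is $C\sqrt{\varepsilon}\lambda(\varepsilon)$, not $C\varepsilon\lambda^2(\varepsilon)$; this is a harmless slip since both vanish as $\varepsilon\to 0$, so a fixed $k$ (any $k>2C_1$, say) absorbs it for all small $\varepsilon$.
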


\begin{proof}
We prove (\ref{eq01 in lemma estimate eq. for weak convergence MDP}) by two steps of estimates. 
For the first step, applying It\^o's formula to $\|X^\varepsilon(t)\|^2_H$, we have
\begin{align*}
&\|X^\varepsilon(t)\|^2_H+2\int^t_0\|\partial_1 X^\varepsilon(s)\|^2_Hds\\
=&-2\int^t_0b(X^\varepsilon, u^0, X^\varepsilon)ds+2\int^t_0 \langle X^\varepsilon(s), \sigma(s,u^0+\sqrt{\varepsilon}\lambda(\varepsilon) X^\varepsilon(s))v^\varepsilon(s)\rangle ds\\
&+2\lambda^{-1}(\varepsilon)\int^t_0\langle X^\varepsilon(s), \sigma(s,u^0+\sqrt{\varepsilon}\lambda(\varepsilon)X^\varepsilon(s))dW(s)\rangle\\
&+\lambda^{-2}(\varepsilon)\int^t_0\|\sigma(s,u^0+\sqrt{\varepsilon}\lambda(\varepsilon)X^\varepsilon(s))\|^2_{L_2(l^2, H)}ds\\
\leqslant& \int^t_0 (\frac{1}{2}\|\partial_1X^\varepsilon(s)\|^2_H+C(1+\|u^0\|^2_{H^{1,1}})\|X^\varepsilon\|^2_H)ds\\
&+ \int^t_0(\|X^\varepsilon(s)\|^2_H\|v^\varepsilon(s)\|^2_{l^2}+\|\sigma(s, u^0+\sqrt{\varepsilon}\lambda(\varepsilon)X^\varepsilon(s))\|^2_{L_2(l^2,H)})ds\\
&+2\lambda^{-1}(\varepsilon)\int^t_0\langle X^\varepsilon(s), \sigma(s,u^0+\sqrt{\varepsilon}\lambda(\varepsilon)X^\varepsilon(s))dW(s)\rangle\\
&+\lambda^{-2}(\varepsilon)\int^t_0\|\sigma(s,u^0+\sqrt{\varepsilon}\lambda(\varepsilon)X^\varepsilon(s))\|^2_{L_2(l^2, H)}ds\\
\leqslant &\int^t_0 (\frac{1}{2}\|\partial_1X^\varepsilon(s)\|^2_H+C(1+\|u^0\|^2_{H^{1,1}})\|X^\varepsilon\|^2_H)ds+\int^t_0\|X^\varepsilon(s)\|^2_H\|v^\varepsilon(s)\|^2_{l^2}ds\\
&+ (1+\lambda^{-2}(\varepsilon))\int^t_0(K_0+K_1\|u^0+\sqrt{\varepsilon}\lambda(\varepsilon)X^\varepsilon\|^2_H+K_2\|\partial_1(u^0+\sqrt{\varepsilon}\lambda(\varepsilon)X^\varepsilon)\|_H^2)ds\\
&+2\lambda^{-1}(\varepsilon)\int^t_0\langle X^\varepsilon(s), \sigma(s,u^0+\sqrt{\varepsilon}\lambda(\varepsilon)X^\varepsilon(s))dW(s)\rangle,
\end{align*}
where we used (A1) in the last inequality.

Note that $v^\varepsilon\in \mathcal{A}_N$, by Lemma \ref{estimates from LZZ18} and Gronwall's inequality, 
\begin{align*}
&\|X^\varepsilon(t)\|^2_H+(\frac{3}{2}-\varepsilon K_2-\lambda^2(\varepsilon)\varepsilon K_2)\int^t_0\|\partial_1 X^\varepsilon(s)\|^2_Hds\\
\leqslant &(C+2\lambda^{-1}(\varepsilon)\int^t_0\langle X^\varepsilon(s), \sigma(s,u^0+\sqrt{\varepsilon}\lambda(\varepsilon)X^\varepsilon(s))dW(s)\rangle)e^{C_1(N)}.
\end{align*}

For the term on the right hand side, by the Burkh\"older-Davis-Gundy inequality we have

\begin{align*}
&2\lambda^{-1}(\varepsilon) e^{C_1(N)} E\left(\sup_{0\leqslant s\leqslant t}|\int^s_0\langle X^\varepsilon(r), \sigma(r,u^0+\sqrt{\varepsilon}\lambda(\varepsilon)X^\varepsilon(r))dW(r)\rangle|\right)\\
\leqslant &6\lambda^{-1}(\varepsilon) e^{C_1(N)}E\left(\int^t_0\|X^\varepsilon(r)\|^2_H\|\sigma(r,u^0+\sqrt{\varepsilon}\lambda(\varepsilon)X^\varepsilon(r))\|^2_{L_2(l^2,H)}ds\right)^\frac{1}{2}\\
\leqslant &\lambda^{-1}(\varepsilon) E[\sup_{0\leqslant s\leqslant t}(\|X^\varepsilon(s)\|^2_H)]\\
&+9\lambda^{-2}(\varepsilon) e^{C_1(N)}E\int^t_0[K_0+K_1\|u^0+\sqrt{\varepsilon}\lambda(\varepsilon)X^\varepsilon(s)\|^2_H+K_2\|\partial_1(u^0+\sqrt{\varepsilon}\lambda(\varepsilon)X^\varepsilon(s))\|^2_H]ds,
\end{align*}
where  $(9{\varepsilon}e^{C_1(N)}+\varepsilon \lambda^2(\varepsilon)+\varepsilon)K_2-\frac{3}{4}<0$ (this can be done since $\sqrt{\varepsilon}\lambda(\varepsilon)\rightarrow 0$) and we used (A1) in the last inequality. Thus we have

\begin{align*}
&E[\sup_{s\in[0,t]}(\|X^\varepsilon(t)\|^2_H)]+E\int^t_0\|\partial_1 X^\varepsilon(s)\|^2_Hds\\
\leqslant&C(N)+C(N)\int^t_0E[\sup_{r\in[0,s]}(\|X^\varepsilon(r)\|^2_H)]ds.
\end{align*}

Then by  Gronwall's inequality  we have
\begin{equation}\label{eq1 in lemma estimate eq. for weak convergence}\aligned
E(\sup_{0\leqslant t\leqslant T}\|X^\varepsilon(t)\|^2_H)+E\int^T_0\|\partial_1 X^\varepsilon(s)\|^2_{H}ds\leqslant C(N).
\endaligned
\end{equation}

Now by It\^o's formula we have
\begin{equation}\label{L4 estimate step1}\aligned
\|X^\varepsilon(t)\|^4_H=&-4\int^t_0\|X^\varepsilon(s)\|^2_H\|\partial_1X^\varepsilon(s)\|^2_Hds-4\int^t_0\|X^\varepsilon(s)\|^2_Hb(X^\varepsilon, u^0, X^\varepsilon)ds\\
&+4\int^t_0\|X^\varepsilon(s)\|_H^2\langle\sigma(s,u^0+\sqrt{\varepsilon}\lambda(\varepsilon)X^\varepsilon(s))v^\varepsilon(s),X^\varepsilon(s)\rangle ds\\
&+2\lambda^{-2}(\varepsilon)\int^t_0\|X^\varepsilon(s)\|^2_H\|\sigma(s,u^0+\sqrt{\varepsilon}\lambda(\varepsilon)X^\varepsilon(s))\|^2_{L_2(l^2,H)}ds\\
&+4\lambda^{-2}(\varepsilon)\int^t_0\|\sigma(s,u^0+\sqrt{\varepsilon}\lambda(\varepsilon)X^\varepsilon(s))^*(X^\varepsilon)\|^2_{l^2}ds\\
&+4\lambda^{-1}(\varepsilon)\int^t_0\|X^\varepsilon(s)\|^2_H\langle X^\varepsilon(s), \sigma(s,u^0+\sqrt{\varepsilon}\lambda(\varepsilon)X^\varepsilon(s))dW(s)\rangle_H\\
=:&-4\int^t_0\|X^\varepsilon\|^2_H\|\partial_1X^\varepsilon(s)\|^2_Hds+I_0+I_1+I_2+I_3+I_4.
\endaligned
\end{equation}
By Lemma \ref{b(u,v,w)}, 
\begin{align*}
|I_0(t)|\leqslant 4\int^t_0\|X^\varepsilon\|^2_H(\frac{1}{4}\|\partial_1X^\varepsilon\|^2_H+C(1+\|u^0\|^2_{H^{1,1}})\|X^\varepsilon\|^2_H))ds.
\end{align*}
By (A1) we have 
\begin{align*}
I_1(t)\leqslant& 4\int^t_0\|X^\varepsilon(s)\|^2_H\|\sigma(s,u^0+\sqrt{\varepsilon}\lambda(\varepsilon)X^\varepsilon(s))\|_{L_2(l^2,H)}\|v^\varepsilon(s)\|_{l^2}\|X^\varepsilon(s)\|_Hds\\
\leqslant& 2\int^t_0\|X^\varepsilon(s)\|^2_H(K_0+K_1\|u^0+\sqrt{\varepsilon}\lambda(\varepsilon)X^\varepsilon(s)\|^2_H\\
&+K_2\|\partial_1(u^0+\sqrt{\varepsilon}\lambda(\varepsilon)X^\varepsilon(s))\|^2_H+\|v^\varepsilon(s)\|^2_{l^2}\|X^\varepsilon(s)\|^2_H)ds,
\end{align*}
and 
\begin{align*}
I_2+I_3\leqslant &6\lambda^{-2}(\varepsilon)\int^t_0\|\sigma(s,u^0+\sqrt{\varepsilon}\lambda(\varepsilon)X^\varepsilon(s))\|_{L_2(l^2,H)}^2\|X^\varepsilon(s)\|_H^2ds\\
\leqslant& 6\lambda^{-2}(\varepsilon)\int^t_0(K_0+K_1\|u^0+\sqrt{\varepsilon}\lambda(\varepsilon)X^\varepsilon(s)\|^2_H\\
&+K_2\|\partial_1(u^0+\sqrt{\varepsilon}\lambda(\varepsilon)X^\varepsilon(s))\|^2_H)\|X^\varepsilon(s)\|^2_Hds.
\end{align*}
Thus we have
\begin{align*}
&\|X^\varepsilon(t)\|^4_H+(3-2\varepsilon\lambda^2(\varepsilon) K_2-6\varepsilon K_2)\int^t_0\|X^\varepsilon(s)\|^2_H\|\partial_1X^\varepsilon(s)\|^2_Hds\\
\leqslant& I_4+C+C\int^t_0(1+\|u^0(s)\|^2_{H^{1,1}}+\|v^\varepsilon(s)\|^2_{l^2})\|X^\varepsilon(s)\|^4_H)ds.
\end{align*}
Since $v^\varepsilon\in\mathcal{A}_N$, by Gronwall's inequality we have
\begin{align*}
&\|X^\varepsilon(t)\|^4_H+(3-2\varepsilon\lambda^2(\varepsilon) K_2-6\varepsilon K_2)\int^t_0\|X^\varepsilon(s)\|^2_H\|\partial_1X^\varepsilon(s)\|^2_Hds\\
\leqslant &\left(I_4+C\right)e^{C_2(N)}.
\end{align*}

Then the Burkh\"older-Davis-Gundy inequality, the Young's inequality and (A1) imply that
\begin{align*}
E(\sup_{s\in[0,t]}I_4(s))\leqslant& 12\lambda^{-1}(\varepsilon) E\left(\int^t_0\|\sigma(s,u^0+\sqrt{\varepsilon}\lambda(\varepsilon)X^\varepsilon(s))\|^2_{L_2(l^2,H)}\|X^\varepsilon(s)\|^6_Hds\right)^\frac{1}{2}\\
\leqslant& \lambda^{-1}(\varepsilon) E(\sup_{s\in[0,t]}\|X^\varepsilon(s)\|^4_H)+36\lambda^{-1}(\varepsilon) E\int^t_0(K_0+K_1\|u^0+\sqrt{\varepsilon}\lambda(\varepsilon)X^\varepsilon(s)\|^2_H\\
&+K_2\|\partial_1(u^0+\sqrt{\varepsilon}\lambda(\varepsilon)X^\varepsilon(s))\|^2_H)\|X^\varepsilon(s)\|^2_Hds.
\end{align*}
Let $\varepsilon$ small enough such that $3-2\varepsilon\lambda^2(\varepsilon) K_2-6\varepsilon K_2-36\varepsilon K_2e^{C_2(N)}>0$ and $\lambda^{-1}(\varepsilon)e^{C_2(N)}<1$. Then the above estimates and (\ref{eq01 in lemma estimate eq. for weak convergence MDP}) imply that
\begin{align*}
&E(\sup_{s\in[0,t]}\|X^\varepsilon(s)\|^4_H)+\int^t_0 \|X^\varepsilon(s)\|^2_H\|X^\varepsilon(s)\|^2_{\tilde{H}^{1,0}}ds\\
\leqslant &C(N)+C(N)E\int^t_0\|X^\varepsilon(s)\|^4_Hds,
\end{align*}
which by Gronwall's inequality yields that 
\begin{align*}
E(\sup_{s\in[0,t]}\|X^\varepsilon(s)\|^4_H)+\int^t_0 \|X^\varepsilon(s)\|^2_H\|X^\varepsilon(s)\|^2_{\tilde{H}^{1,0}}ds\leqslant C(N).
\end{align*}

For (\ref{eq02 in lemma estimate eq. for weak convergence MDP}), let $h(t)=kg(t)+\int^t_0\|v^\varepsilon(s)\|^2_{l^2}ds$ for some universal constant $k$. Applying It\^o's formula to $e^{-h(t)}\|X^\varepsilon(t)\|^2_{\tilde{H}^{0,1}}$(by applying It\^o's formula to its finite-dimension projection first and then passing to the limit), we have
\begin{align*}
&e^{-h(t)}\|X^\varepsilon(t)\|^2_{\tilde{H}^{0,1}}+2\int^t_0e^{-h(s)}(\|\partial_1 X^\varepsilon(s)\|^2_H+\|\partial_1\partial_2 X^\varepsilon(s)\|^2_H)ds\\
=&-\int^t_0e^{-h(s)}(k\|\partial_1X^\varepsilon(s)\|^2_H+\|v^\varepsilon(s)\|^2_{l^2})\|X^\varepsilon(s)\|^2_{\tilde{H}^{0,1}}ds\\
&-2\int^t_0e^{-h(s)} b(X^\varepsilon,u^0,X^\varepsilon)ds-2\int^t_0e^{-h(s)}\langle \partial_2 X^\varepsilon(s), \partial_2(X^\varepsilon\cdot \nabla (u^0+\sqrt{\varepsilon}\lambda(\varepsilon) X^\varepsilon))(s)\rangle ds\\
&-2\int^t_0e^{-h(s)}\langle \partial_2 X^\varepsilon(s), \partial_2(u^0\cdot \nabla X^\varepsilon)(s)\rangle ds\\
&+2\int^t_0e^{-h(s)}\langle X^\varepsilon(s), \sigma(s,u^0+\sqrt{\varepsilon}\lambda(\varepsilon) X^\varepsilon(s))v^\varepsilon(s)\rangle_{\tilde{H}^{0,1}}ds\\
&+2\lambda^{-1}(\varepsilon)\int^t_0 e^{-h(s)}\langle X^\varepsilon(s), \sigma(s,u^0+\sqrt{\varepsilon}\lambda(\varepsilon) X^\varepsilon(s))dW(s)\rangle_{\tilde{H}^{0,1}}\\
&+\lambda^{-2}(\varepsilon)\int^t_0e^{-h(s)}\|\sigma(s,u^0+\sqrt{\varepsilon}\lambda(\varepsilon) X^\varepsilon(s))\|^2_{L_2(l^2, \tilde{H}^{0,1})}ds.
\end{align*}
By Lemma \ref{b(u,v,w)}, we have
$$2|b(X^\varepsilon,u^0,X^\varepsilon)|\leqslant \alpha\|\partial_1X^\varepsilon\|^2_H+C(1+\|u^0\|^2_{\tilde{H}^{1,1}})\|X^\varepsilon\|^2_H,$$
where $\alpha<\frac{1}{3}$.
By Lemma \ref{estimate for b with partial_2},  there exists $C_1$,
$$2\sqrt{\varepsilon}\lambda(\varepsilon)|\langle \partial_2 X^\varepsilon, \partial_2(X^\varepsilon\cdot \nabla X^\varepsilon)\rangle|\leqslant \alpha\|\partial_1\partial_2X^\varepsilon\|^2_H+C_1(1+\|\partial_1X^\varepsilon\|^2_H)\|\partial_2X^\varepsilon\|^2_H.$$
By Lemma \ref{b(u,v,w)}, we have
\begin{align*}
2|\langle \partial_2 X^\varepsilon, \partial_2(X^\varepsilon\cdot \nabla u^0))\rangle |\leqslant& 2|b(\partial_2X^\varepsilon, u^0,\partial_2 X^\varepsilon)|+2|b(X^\varepsilon, \partial_2u^0, \partial_2X^\varepsilon)|\\
\leqslant& \alpha(\|X^\varepsilon\|^2_{\tilde{H}^{1,0}}+\|\partial_2X^\varepsilon\|^2_{\tilde{H}^{1,0}})+C\|u^0\|^2_{\tilde{H}^{1,2}}\|\partial_2X^\varepsilon\|^2_H.
\end{align*}
Similarly, 
$$|\langle \partial_2 X^\varepsilon(s), \partial_2(u^0\cdot \nabla X^\varepsilon)(s)\rangle|=|b(\partial_2u^0,X^\varepsilon,\partial_2 X^\varepsilon)|\leqslant \alpha\|X^\varepsilon\|^2_{\tilde{H}^{1,1}}+C\|u^0\|^2_{\tilde{H}^{1,1}}\|\partial_2X^\varepsilon\|^2_H. $$
By Young's inequality, 
$$2|\langle X^\varepsilon(s), \sigma(s,u^0+\sqrt{\varepsilon}\lambda(\varepsilon) X^\varepsilon)v^\varepsilon(s)\rangle_{\tilde{H}^{0,1}}|\leqslant \|X^\varepsilon\|^2_{\tilde{H}^{0,1}}\|v^\varepsilon\|^2_{l^2}+\|\sigma(s,u^0+\sqrt{\varepsilon}\lambda(\varepsilon) X^\varepsilon)\|^2_{L_2(l^2,\tilde{H}^{0,1})}.$$
Choosing $k>2C_1\sqrt{\varepsilon}\lambda(\varepsilon)$, we have
\begin{align*}
&e^{-h(t)}\|X^\varepsilon(t)\|^2_{\tilde{H}^{0,1}}+(2-3\alpha)\int^t_0e^{-h(s)}\| X^\varepsilon(s)\|^2_{\tilde{H}^{1,1}}ds\\
\leqslant&C\int^t_0e^{-h(s)}(1+\|u^0\|^2_{\tilde{H}^{1,2}})\|X^\varepsilon(s)\|^2_{\tilde{H}^{0,1}}ds\\
&+(1+\lambda^{-2}(\varepsilon))\int^t_0e^{-h(s)}\|\sigma(s,u^0+\sqrt{\varepsilon}\lambda(\varepsilon) X^\varepsilon)\|^2_{L_2(l^2, \tilde{H}^{0,1})}ds\\
&+2\lambda^{-1}(\varepsilon)\int^t_0 e^{-h(s)}\langle X^\varepsilon(s), \sigma(s,u^0+\sqrt{\varepsilon}\lambda(\varepsilon) X^\varepsilon)dW(s)\rangle_{\tilde{H}^{0,1}}.
\end{align*}
By (A2) we have
\begin{align*}
&(1+\lambda^{-2}(\varepsilon))\|\sigma(s,u^0+\sqrt{\varepsilon}\lambda(\varepsilon) X^\varepsilon)\|^2_{L_2(l^2, \tilde{H}^{0,1})}\leqslant  C(1+\|u^0\|^2_{\tilde{H}^{1,1}})\\
&+(1+\lambda^{-2}(\varepsilon))\left(\tilde{K}_0+\tilde{K}_1\varepsilon\lambda^2(\varepsilon)\|X^\varepsilon\|^2_{\tilde{H}^{0,1}}+\tilde{K}_2\varepsilon\lambda^2(\varepsilon)(\|\partial_1X^\varepsilon\|^2_H+\|\partial_1\partial_2X^\varepsilon\|^2_{H})\right).
\end{align*}
By the Burkh\"older-Davis-Gundy inequality we have
\begin{align*}
&2\lambda^{-1}(\varepsilon) E\left(\sup_{s\in[0,t]}|\int^s_0e^{-h(r)}\langle X^\varepsilon(r), \sigma(r,u^0+\sqrt{\varepsilon}\lambda(\varepsilon) X^\varepsilon)dW(r)\rangle_{\tilde{H}^{0,1}}|\right)\\
\leqslant &6\lambda^{-1}(\varepsilon) E\left(\int^{t}_0e^{-2h(s)}\|X^\varepsilon(s)\|^2_{\tilde{H}^{0,1}}\|\sigma(s,u^0+\sqrt{\varepsilon}\lambda(\varepsilon) X^\varepsilon)\|^2_{L_2(l^2,\tilde{H}^{0,1})}ds\right)^\frac{1}{2}\\
\leqslant &\lambda^{-1}(\varepsilon) E[\sup_{s\in[0,t]}(e^{-h(s)}\|X^\varepsilon(s)\|^2_{\tilde{H}^{0,1}})]+\lambda^{-1}(\varepsilon)C\int^t_0e^{-h(s)}(1+\|u^0\|^2_{\tilde{H}^{1,1}})ds\\
&+9\varepsilon\lambda(\varepsilon) E\int^{t}_0e^{-h(s)}[\tilde{K_1}\|X^\varepsilon(s)\|^2_{\tilde{H}^{0,1}}+\tilde{K_2}(\|\partial_1 X^\varepsilon(s)\|^2_H+\|\partial_1\partial_2 X^\varepsilon(s)\|^2_H)]ds,
\end{align*}
where  we choose $\varepsilon$ small enough such that $(9\varepsilon\lambda(\varepsilon)+{\varepsilon}\lambda^2(\varepsilon)+\varepsilon)\tilde{K_2}<1-3\alpha$  and we used (A2) in the last inequality.

Combine the above estimates,  we have
\begin{align*}
&E(\sup_{s\in[0,{t}]}e^{-h(s)}\|X^\varepsilon(s)\|^2_{\tilde{H}^{0,1}})+E\int^{t}_0e^{-h(s)}\|X^\varepsilon(s)\|^2_{\tilde{H}^{1,1}}ds\\
\leqslant &C+CE\left(\int^{t}_0e^{-h(s)}(1+\|u^0(s)\|^2_{\tilde{H}^{1,2}})\|X^\varepsilon(s)\|^2_{\tilde{H}^{0,1}}ds\right)
\end{align*} 

Then Gronwall's inequality and \eqref{apriori estimate H02} imply  that 
\begin{align*}
E(\sup_{0\leqslant t\leqslant T}e^{-h(t)}\|X^\varepsilon(t)\|^2_{\tilde{H}^{0,1}})+E\int^{T}_0e^{-h(s)}\|X^\varepsilon(s)\|^2_{\tilde{H}^{1,1}}ds\leqslant C.
\end{align*}
Since $v^\varepsilon\in \mathcal{S}_N$, we deduce that
\begin{equation}\label{eq2 in lemma estimate eq. for weak convergence}
E(\sup_{t\in[0,T]}e^{-kg(t)}\|X^\varepsilon(t)\|^2_{\tilde{H}^{0,1}})+E\int^{T}_0e^{-kg(s)}\|X^\varepsilon(s)\|^2_{\tilde{H}^{1,1}}ds\leqslant C.
\end{equation}

\end{proof}

Similar as \cite[lemma 4.3]{LZZ18}, we have the following tightness lemma:
\begin{lemma}\label{tightness lemma}
Assume $X^\varepsilon$ is a solution to (\ref{eq. for weak convergence MDP}) with $v^\varepsilon\in\mathcal{A}_N$ and $\varepsilon$ small enough. 
There exists $\varepsilon_0>0$, such that
$\{X^\varepsilon\}_{\varepsilon\in(0,\varepsilon_0)}$ is tight in the space
$$\chi=C([0,T],H^{-1})\bigcap L^2([0,T],H)\bigcap L^2_w([0,T], H^{1,1})\bigcap L^\infty_{w^*}([0,T], H^{0,1}),$$
where $L^2_w$ denotes the weak topology and $L^\infty_{w^*}$ denotes the weak star topology.
\end{lemma}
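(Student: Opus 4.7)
The plan is to verify tightness of $\{X^\varepsilon\}$ in each factor of the product space $\chi$ separately and then combine. The a priori estimates of Lemma \ref{estimate eq. for weak convergence} supply the input, but because (\ref{eq02 in lemma estimate eq. for weak convergence MDP}) carries the random exponential weight $e^{-kg(t)}$, the first step is to introduce stopping times $\tau^\varepsilon_R := \inf\{t \in [0,T] : g(t) > R\} \wedge T$. On the event $\{\tau^\varepsilon_R = T\}$ the weight is bounded below by $e^{-kR}$, which promotes (\ref{eq02 in lemma estimate eq. for weak convergence MDP}) to genuine uniform bounds in $L^\infty([0,T], \tilde{H}^{0,1})$ and $L^2([0,T], \tilde{H}^{1,1})$; by Markov's inequality $P(\tau^\varepsilon_R < T)$ can be made arbitrarily small by choosing $R$ large, uniformly in $\varepsilon$.

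Next, for tightness in $L^2([0,T], H) \cap C([0,T], H^{-1})$, I would establish time regularity by writing $X^\varepsilon(t) = A^\varepsilon(t) + M^\varepsilon(t)$, where $A^\varepsilon$ collects the drift and the control term and $M^\varepsilon(t) = \lambda^{-1}(\varepsilon)\int_0^t \sigma(s, u^0 + \sqrt{\varepsilon}\lambda(\varepsilon) X^\varepsilon)dW(s)$. Using (A0)--(A2) together with the uniform bounds from the stopping-time cutoff and the fact that $v^\varepsilon \in \mathcal{A}_N$, $A^\varepsilon$ is bounded in $W^{1,p}([0,T], H^{-1})$ for some $p > 1$ (as in (\ref{bded in Lp(H-1) MDP})), while $M^\varepsilon$ is bounded in $C^\alpha([0,T], H^{-1})$ for some $\alpha \in (0, 1/2)$ by a Kolmogorov-type criterion applied to the stochastic integral. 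Coupled with the uniform bound in $L^2([0,T], \tilde{H}^{1,1})$, Aubin--Lions gives tightness in $L^2([0,T], H)$, and together with the uniform $H^{-1}$-boundedness this promotes to tightness in $C([0,T], H^{-1-\delta})$, then in $C([0,T], H^{-1})$ by interpolation with the $L^\infty(H)$ bound.

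For the weak topologies, the bounds in $L^\infty([0,T], \tilde{H}^{0,1})$ and $L^2([0,T], \tilde{H}^{1,1})$ obtained on $\{\tau^\varepsilon_R = T\}$ produce relatively compact sets in $L^\infty_{w^*}([0,T], \tilde{H}^{0,1})$ and $L^2_w([0,T], \tilde{H}^{1,1})$ respectively, because norm-bounded sets in the dual of a separable space (resp.\ in a reflexive space) are weak-star (resp.\ weakly) relatively compact, and these topologies are metrizable on bounded sets. Intersecting the four compact sets coming from the four factors of $\chi$, taking $R$ large, and combining with the small exceptional set $\{\tau^\varepsilon_R < T\}$, yields a compact set $K_\eta \subset \chi$ with $P(X^\varepsilon \in K_\eta) \geqslant 1 - \eta$ uniformly in $\varepsilon$.

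The main obstacle is the time-regularity input to the Aubin--Lions step: by (A1)--(A2) the Hilbert--Schmidt norm of $\sigma(s, u^0 + \sqrt{\varepsilon}\lambda(\varepsilon)X^\varepsilon)$ is controlled precisely in terms of $\|\partial_1 X^\varepsilon\|^2_H$ and $\|\partial_1\partial_2 X^\varepsilon\|^2_H$, so the time increments of $M^\varepsilon$ involve exactly the quantity bounded by the exponential-weighted estimate. The stopping-time localization is therefore essential in order to convert those conditional bounds into an honest $C^\alpha([0,T], H^{-1})$ estimate, and the proof reduces to verifying that on $\{\tau^\varepsilon_R = T\}$ the Kolmogorov exponent can be taken in a range making the resulting Hölder space compactly embed into $C([0,T], H^{-1})$.
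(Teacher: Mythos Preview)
Your approach is essentially correct and close in spirit to the paper's, but organized differently in one key respect. The paper does \emph{not} introduce stopping times to remove the weight $e^{-kg(t)}$. Instead it builds the weight directly into the definition of the candidate compact set:
\[
K_R=\Big\{u:\ \sup_t\|u\|_H^2+\textstyle\int_0^T\|u\|_{\tilde H^{1,0}}^2+\|u\|_{C^{1/16}([0,T],H^{-1})}+\sup_t e^{-k\int_0^t\|\partial_1 u\|_H^2}\|u\|_{\tilde H^{0,1}}^2+\textstyle\int_0^T e^{-k\int_0^t\|\partial_1 u\|_H^2}\|u\|_{\tilde H^{1,1}}^2\leqslant R\Big\},
\]
and invokes \cite[Lemma 4.3]{LZZ18} for the fact that such a set is relatively compact in $\chi$. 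Chebyshev applied directly to \eqref{eq01 in lemma estimate eq. for weak convergence MDP} and \eqref{eq02 in lemma estimate eq. for weak convergence MDP} then places $X^\varepsilon$ in $K_R$ with high probability. Your stopping-time localization achieves the same effect, trading the slightly unusual weighted-norm compact set for a two-step argument (first control $g(T)$, then unweight). Both work; the paper's version is more self-contained once one accepts the compactness of $K_R$.

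Your identification of the ``main obstacle'' is, however, off target. For the $C^\alpha([0,T],H^{-1})$ bound on the stochastic integral $M^\varepsilon$, the paper uses assumption (A0), which controls $\|\sigma(s,\cdot)\|_{L_2(l^2,H^{-1})}$ by $\|u^0+\sqrt\varepsilon\lambda(\varepsilon)X^\varepsilon\|_H$ alone. That quantity is bounded in $L^\infty_t$ in expectation by \eqref{eq01 in lemma estimate eq. for weak convergence MDP} with no weight, so the Kolmogorov step needs neither the stopping time nor the $\tilde H^{0,1}$/$\tilde H^{1,1}$ estimates. The weighted bound \eqref{eq02 in lemma estimate eq. for weak convergence MDP} is used only for the $L^2_w([0,T],\tilde H^{1,1})$ and $L^\infty_{w^*}([0,T],\tilde H^{0,1})$ factors, exactly where you use it. Finally, the paper devotes a paragraph (via Kuratowski's theorem) to checking that $\chi\in\mathcal B(C([0,T],H^{-1}))$ and $\mathcal B(\tau_\chi)=\mathcal B(C([0,T],H^{-1}))\cap\chi$, so that the law of $X^\varepsilon$ is well defined on $\chi$; this measurability point is absent from your outline and should be addressed.
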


\begin{proof}
Note  that the law of $Z^\varepsilon_v$ is defined on the path space $C([0,T],H^{-1})$. First we should point out that it can be restricted to $\chi$. We denote the space  $C([0,T], H^{-1})$ by $X$ with Borel $\sigma$-algebra $\mathcal{B}(X)$. 

For $N\in\mathbb{N}$, let 
$$Y_N:=\{w\in L^2([0,T], \tilde{H}^{1,1}): \|w\|_{L^2([0,T], \tilde{H}^{1,1})}\leqslant N\},$$
equipped with the weak topology on $L^2([0,T], \tilde{H}^{1,1})$. Then $Y_N$ is compact and metrizable, hence separable and complete.

Similarly, let
$$Z_N:=\{w\in L^\infty([0,T], \tilde{H}^{0,1}): \|w\|_{L^\infty([0,T], \tilde{H}^{0,1})}\leqslant N\},$$
equipped with the weak star topology on $L^\infty([0,T], \tilde{H}^{0,1})$. Then  $Z_N$ is compact and metrizable, hence separable and complete.

Define 
$$\chi_N=C([0,T],H^{-1})\bigcap L^2([0,T],H)\bigcap Y_N\bigcap Z_N:=X_1\cap X_2\cap X_3\cap X_4,$$
where $X_i$ are complete  separable metric spaces with metric $d_i$, $i=1,2,3,4.$ Let $\chi_N$ be equipped with the metric $d=\max\{d_1,d_2,d_3,d_4\}$. Then $\chi_N$ is separable. To show that $\chi_N$ is complete, it is enough to show that if $w_k\in \chi_N, k\in\mathbb{N}$ and $w_k\rightarrow w^{(i)}\in X_i$ in $d_i$ for every $1\leqslant i\leqslant 4$, then $w^{(1)}=w^{(2)}=w^{(3)}=w^{(4)}$. This is true since obviously we have the continuous embedding
$$X_i\subset \mathcal{M}([0,T], H^{-2}), \quad 1\leqslant i\leqslant 4,$$
where $\mathcal{M}$ denotes the space of  Radon measures. Hence $(\chi_N, d)$ is a complete separable metric space. Furthermore, the following embeddings are continuous and hence measurable:
$$(\chi_N,d)\subset X.$$
Therefore by Kuratowski's theorem we have for the Borel $\sigma$-algebra $\mathcal{B}(\chi_N)$ of $(\chi_N,d)$, 
$$\chi_N\in \mathcal{B}(X), \quad \mathcal{B}(\chi_N)=\mathcal{B}(X)\cap \chi_N.$$
Consequently, $\chi=\cup \chi_N\in \mathcal{B}(X)$.

Note that $\chi_N$ is a $\tau_\chi$-closed subset of $\chi$.
Let $A\subset \chi$ be $\tau_\chi$-closed. Then $A\cap \chi_N$ is $\tau_\chi$-closed too, hence
\begin{align*}
A\cap\chi_N&\in  \mathcal{B}(\chi_N)\\
&=\mathcal{B}(X)\cap\chi_N=\{B\in\mathcal{B}(X):B\subset \chi_N\}\\
&\subset \{B\in\mathcal{B}(X):B\subset \chi\}\\
&\subset \mathcal{B}(X)\cap \chi.
\end{align*}
Hence 
$$A=\bigcup^\infty_{N=1}A\cap \chi_N\in \mathcal{B}(X)\cap\chi$$
and
$$\mathcal{B}(\tau_\chi) \subset \mathcal{B}(X)\cap\chi.$$

Since $\chi\subset X$ continuously, hence measurably, we have $\mathcal{B}(X)\cap\chi \subset\mathcal{B}(\tau_\chi) .$
Then
 $$\mathcal{B}(\tau_\chi) =\mathcal{B}(X)\cap\chi.$$ Thus any probability measure on $X$ can be restricted on $\chi$.
 
Let $k$ be the same constant as in the proof of (\ref{eq02 in lemma estimate eq. for weak convergence MDP}) and let
\begin{align*}
K_R:=&\Big{\{} u\in C([0,T],H^{-1}): \sup_{t\in[0,T]}\|u(t)\|^2_H+\int^T_0\|u(t)\|^2_{\tilde{H}^{1,0}}dt+\|u\|_{C^\frac{1}{16}([0,T], H^{-1})}\\
&+\sup_{t\in[0,T]}e^{-k\int^t_0\|\partial_1u(s)\|^2_Hds}\|u(t)\|^2_{\tilde{H}^{0,1}}+\int^T_0e^{-k\int^t_0\|\partial_1u(s)\|^2_Hds}\|u(t)\|^2_{\tilde{H}^{1,1}}dt\leqslant R\Big{\}},
\end{align*}
where $C^\frac{1}{16}([0,T], H^{-1})$ is the H\"older space with the norm:
$$\|f\|_{C^\frac{1}{16}([0,T], H^{-1})}=\sup_{0\leqslant s< t\leqslant T}\frac{\|f(t)-f(s)\|_{H^{-1}}}{|t-s|^{\frac{1}{16}}}.$$

Then from the proof of \cite[Lemma 4.3]{LZZ18}, we know that for any $R>0$, $K_R$ is relatively compact in $\chi$. 

Now we only need to show that  for any $\delta>0$, there exists $R>0$, such that $P(X^\varepsilon\in  K_R)>1-\delta$ for any $\varepsilon\in(0,\varepsilon_0)$, where $\varepsilon_0$ is the constant such that Lemma \ref{estimate eq. for weak convergence} hold.

By Lemma \ref{estimate eq. for weak convergence} and Chebyshev inequality, we can choose $R_0$ large enough such that
\begin{align*}
P\left(\sup_{t\in[0,T]}\|X^\varepsilon(t)\|^2_H+\int^T_0\|X^\varepsilon(t)\|^2_{\tilde{H}^{1,0}}dt>\frac{R_0}{3}\right)<\frac{\delta}{4},
\end{align*}
and
\begin{align*}
P\left(\sup_{t\in[0,T]}e^{-k\int^t_0\|\partial_1X^\varepsilon(s)\|^2_Hds}\|X^\varepsilon(t)\|^2_{\tilde{H}^{0,1}}+\int^T_0e^{-k\int^t_0\|\partial_1X^\varepsilon(s)\|^2_Hds}\|X^\varepsilon(t)\|^2_{\tilde{H}^{1,1}}dt>\frac{R_0}{3}\right)<\frac{\delta}{4},
\end{align*}
where $k$ is the same constant as  in (\ref{eq02 in lemma estimate eq. for weak convergence MDP}).

 Fix $R_0$ and let
 \begin{align*}
 \hat{K}_{R_0}=&\Big{\{}u\in C([0,T],H^{-1}): \sup_{t\in[0,T]}\|u(t)\|^2_H+\int^T_0\|u(t)\|^2_{\tilde{H}^{1,0}}dt\leqslant \frac{R_0}{3}\text{ and }\\
&\sup_{t\in[0,T]}e^{-k\int^t_0\|\partial_1u(s)\|^2_Hds}\|u(t)\|^2_{\tilde{H}^{0,1}}+\int^T_0e^{-k\int^t_0\|\partial_1u(s)\|^2_Hds}\|u(t)\|^2_{\tilde{H}^{1,1}}dt\leqslant \frac{R_0}{3}\Big{\}}.
 \end{align*}
Then $P(X^\varepsilon\in C([0,T], H^{-1})\setminus \hat{K}_{R_0})<\frac{\delta}{2}$.

 Now for  $X^\varepsilon\in\hat{K}_{R_0}$, we have $\partial_1^2X^\varepsilon$ is uniformly bounded in $L^2([0,T],H^{-1})$. Similar as in Lemma \ref{wellposedness for skeleton eq. MDP}, $X^\varepsilon$ is uniformly bounded in $L^4([0,T],H^\frac{1}{2})$ and $L^4([0,T], L^4(\mathbb{T}^2))$, thus $B(X^\varepsilon, u^0+\sqrt{\varepsilon}\lambda(\varepsilon)X^\varepsilon)$ and $B(u^0, X^\varepsilon)$ are uniformly bounded in $L^2([0,T],H^{-1})$.  By H\"older's inequality, we have
 \begin{align*}
& \sup_{s,t\in[0,T],s\neq t}\frac{\|\int^t_s\partial_1^2X^\varepsilon(r)+B(X^\varepsilon, u^0+\sqrt{\varepsilon}\lambda(\varepsilon)X^\varepsilon)+B(u^0, X^\varepsilon)dr\|^2_{H^{-1}}}{|t-s|}\\
 \leqslant& \int^T_0\|\partial_1^2X^\varepsilon(r)+B(X^\varepsilon, u^0+\sqrt{\varepsilon}\lambda(\varepsilon)X^\varepsilon)+B(u^0, X^\varepsilon)\|^2_{H^{-1}}dr\leqslant C(R_0),
 \end{align*}
 where $C(R_0)$ is a constant depend on $R_0$.  For any $p\in(1,\frac{4}{3})$, by H\"older's inequality, we have
\begin{align*}
&\sup_{s,t\in[0,T],s\neq t}\frac{\|\int^t_s\sigma(r,u^0+\sqrt{\varepsilon}\lambda(\varepsilon)X^\varepsilon(r))v^\varepsilon(r) dr\|^p_{H^{-1}}}{|t-s|^{p-1}}\\
\leqslant& \int^T_0\|\sigma(r,u^0+\sqrt{\varepsilon}\lambda(\varepsilon)X^\varepsilon(r))v^\varepsilon(r)\|^p_{H^{-1}}dr\\
\leqslant& \int^T_0 \|\sigma(r,u^0+\sqrt{\varepsilon}\lambda(\varepsilon)X^\varepsilon(r))\|^p_{L_2(l^2,H^{-1})}\|v^\varepsilon(r)\|^p_{l^2}dr\\
\leqslant& C\int^T_0(1+\|u^0+\sqrt{\varepsilon}\lambda(\varepsilon)X^\varepsilon(r)\|^4_{H}+\|v^\varepsilon(r)\|^4_{l^2})dr\\
\leqslant& C(R_0),
\end{align*}
where we used Young's inequality and (A0) in the third inequality.

Moreover, for any $0\leqslant s\leqslant t\leqslant T$, by H\"older's inequality we have
\begin{align*}
&E\|\int^t_s\sigma(r,u^0+\sqrt{\varepsilon}\lambda(\varepsilon)X^\varepsilon(r))dW(r)\|^4_{H^{-1}}\\
\leqslant& C E\left(\int^t_s \|\sigma(r,u^0+\sqrt{\varepsilon}\lambda(\varepsilon)X^\varepsilon(r))\|^2_{L_2(l^2,H^{-1})}dr \right)^2\\
\leqslant& C|t-s|E\int^t_s \|\sigma(r,u^0+\sqrt{\varepsilon}\lambda(\varepsilon)X^\varepsilon(r))\|^4_{L_2(l^2,H^{-1})}dr\\
\leqslant& C|t-s|^2(1+E(\sup_{t\in[0,T]}\|u^0+\sqrt{\varepsilon}\lambda(\varepsilon)X^\varepsilon(t)\|^4_H))\\
\leqslant &C|t-s|^2,
\end{align*}
where we used (A0) in the third inequality and (\ref{eq01 in lemma estimate eq. for weak convergence MDP}) in the last inequality. Then by Kolmogorov's continuity criterion, for any $\alpha\in(0, \frac{1}{4})$, we have
\begin{align*}
E\left(\sup_{s,t\in[0,T],s\neq t}\frac{\|\int^t_s\sigma(r,u^0+\sqrt{\varepsilon}\lambda(\varepsilon)X^\varepsilon(r))dW(r)\|^4_{H^{-1}}}{|t-s|^{2\alpha}}\right)\leqslant C.
\end{align*}
Choose $p=\frac{8}{7}, \alpha=\frac{1}{8}$ in the above estimates, we deduce that there exists $R>R_0$ such that 
\begin{align*}
P\left(\|X^\varepsilon\|_{C^\frac{1}{16}([0,T],H^{-1})}>\frac{R}{3}, X^\varepsilon\in \hat{K}_{R_0}\right)\leqslant \frac{E\left(\sup_{s,t\in[0,T],s\neq t}\frac{\|X^\varepsilon(t)-X^\varepsilon(s)\|_{H^{-1}}}{|t-s|^{\frac{1}{16}}}1_{\{X^\varepsilon\in \hat{K}_{R_0}\}}\right)}{\frac{R}{3}}< \frac{\delta}{2}.
\end{align*}
Combining the fact that  $P(X^\varepsilon\in C([0,T], H^{-1})\setminus \hat{K}_{R_0})<\frac{\delta}{2}$, we finish the proof.

\end{proof}

\begin{lemma}\label{weak convergence MDP}
Let $\{v^\varepsilon\}_{\varepsilon>0}\subset \mathcal{A}_N$ for some $N<\infty$. Assume $v^\varepsilon$ converge to $v$ in distribution as $S_N$-valued random elements, then
\begin{align*}
g^\varepsilon\left(W(\cdot)+\lambda(\varepsilon)\int^\cdot_0v^\varepsilon(s)ds\right)\rightarrow g^0\left(\int^\cdot_0v(s)ds\right)
\end{align*}
in distribution as $\varepsilon\rightarrow 0$.
\end{lemma}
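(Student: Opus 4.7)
The plan is to combine the tightness from Lemma \ref{tightness lemma} with a Skorokhod-type representation, identify the limit via the skeleton equation \eqref{skeleton eq. MDP}, and invoke its uniqueness (Proposition \ref{wellposedness for skeleton eq. MDP}). Recall by Lemma \ref{Girsanov thm to prove existence MDP} that $X^\varepsilon := g^\varepsilon(W(\cdot)+\lambda(\varepsilon)\int_0^\cdot v^\varepsilon(s)\,ds)$ is the unique strong solution to \eqref{eq. for weak convergence MDP}. By Lemma \ref{tightness lemma} the family $\{X^\varepsilon\}$ is tight in $\chi$, and $\{v^\varepsilon\}$ is tight in $S_N$ by hypothesis. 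Hence the family of joint laws $(X^\varepsilon, v^\varepsilon, W)$ is tight in $\chi \times S_N \times C([0,T],U)$, and Prokhorov's theorem together with Skorokhod's representation theorem (or a Jakubowski version, since $\chi$ need not be Polish but admits a countable family of continuous injections into a Polish space) yields a new probability space $(\tilde\Omega,\tilde{\mathcal F},\tilde P)$ and random elements $(\tilde X^\varepsilon, \tilde v^\varepsilon, \tilde W^\varepsilon)$ with the same joint law as $(X^\varepsilon, v^\varepsilon, W)$, converging $\tilde P$-a.s.\ along a subsequence to $(\tilde X, \tilde v, \tilde W)$.

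The next step is to check that $\tilde X$ satisfies the skeleton equation driven by $\tilde v$. Writing \eqref{eq. for weak convergence MDP} against a smooth divergence-free test function $\varphi$, one gets
\begin{align*}
\langle \tilde X^\varepsilon(t),\varphi\rangle =& \int_0^t \langle \tilde X^\varepsilon,\partial_1^2\varphi\rangle\,ds - \int_0^t b(\tilde X^\varepsilon, u^0+\sqrt{\varepsilon}\lambda(\varepsilon)\tilde X^\varepsilon,\varphi)\,ds - \int_0^t b(u^0,\tilde X^\varepsilon,\varphi)\,ds \\
& + \int_0^t \langle \sigma(s,u^0+\sqrt{\varepsilon}\lambda(\varepsilon)\tilde X^\varepsilon)\tilde v^\varepsilon,\varphi\rangle\,ds + \lambda^{-1}(\varepsilon)\int_0^t \langle \sigma(s,u^0+\sqrt{\varepsilon}\lambda(\varepsilon)\tilde X^\varepsilon)\,d\tilde W^\varepsilon,\varphi\rangle.
\end{align*}
The deterministic linear term passes to the limit because of the strong convergence in $L^2([0,T],H)$ inherited from $\chi$. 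For the bilinear term, $\sqrt{\varepsilon}\lambda(\varepsilon)\to 0$ kills the correction, and the $L^2([0,T],H)$ strong convergence together with $u^0\in L^2([0,T],\tilde H^{1,1})$ is enough to identify $b(\tilde X^\varepsilon, u^0, \varphi)\to b(\tilde X, u^0, \varphi)$ (and similarly for $b(u^0,\tilde X^\varepsilon,\varphi)$). For the drift-type control term, the Lipschitz bound (A3) together with $\sqrt{\varepsilon}\lambda(\varepsilon)\to 0$ and the uniform bound $\|\tilde v^\varepsilon\|_{L^2(l^2)}\le\sqrt N$ gives $\sigma(\cdot,u^0+\sqrt{\varepsilon}\lambda(\varepsilon)\tilde X^\varepsilon)\to \sigma(\cdot,u^0)$ in $L^2([0,T];L_2(l^2,H))$, and the weak convergence $\tilde v^\varepsilon\rightharpoonup \tilde v$ in $L^2([0,T],l^2)$ combined with this strong convergence yields
\[
\int_0^t\langle\sigma(s,u^0+\sqrt{\varepsilon}\lambda(\varepsilon)\tilde X^\varepsilon)\tilde v^\varepsilon,\varphi\rangle\,ds \longrightarrow \int_0^t\langle\sigma(s,u^0)\tilde v,\varphi\rangle\,ds.
\]
Finally, the stochastic integral: by the It\^o isometry and (A1), its $L^2(\tilde P)$ norm is bounded by $C\lambda^{-1}(\varepsilon)$ using the uniform estimate \eqref{eq01 in lemma estimate eq. for weak convergence MDP} of Lemma \ref{estimate eq. for weak convergence}, so it tends to $0$ since $\lambda(\varepsilon)\to\infty$.

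Consequently $\tilde X$ solves \eqref{skeleton eq. MDP} with control $\tilde v$ on $(\tilde\Omega,\tilde{\mathcal F},\tilde P)$. By the uniqueness part of Proposition \ref{wellposedness for skeleton eq. MDP} we have $\tilde X = X^{\tilde v} = g^0(\int_0^\cdot \tilde v(s)\,ds)$ a.s., and since $(\tilde v,\tilde X)$ has the limiting law of $(v^\varepsilon, X^\varepsilon)$ and every subsequential limit produces the same deterministic functional $g^0$, the whole sequence $X^\varepsilon$ converges in distribution to $g^0(\int_0^\cdot v(s)\,ds)$. The main technical obstacle is the identification of the stochastic control term: one must upgrade the weak convergence of $\tilde v^\varepsilon$ to weak-strong convergence against $\sigma(\cdot,u^0+\sqrt{\varepsilon}\lambda(\varepsilon)\tilde X^\varepsilon)$, which requires the strong operator-norm convergence above and, implicitly, the $\tilde H^{0,2}$ regularity of $u^0$ from Lemma \ref{wellposedness result for u0 in H02} to secure the needed bounds on $\sigma$ in Hilbert--Schmidt norm.
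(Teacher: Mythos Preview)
Your identification of the limit is essentially correct, but there is a genuine gap: you only obtain convergence in distribution in the space $\chi$, which carries the \emph{weak} topology on $L^2([0,T],\tilde H^{1,1})$ and the weak-star topology on $L^\infty([0,T],\tilde H^{0,1})$. The statement requires convergence in distribution in $L^\infty([0,T],H)\cap L^2([0,T],\tilde H^{1,0})\cap C([0,T],H^{-1})$ with its \emph{strong} norm topology. Tightness in $\chi$ plus identification of the limit does not by itself yield this; you need a further step upgrading the a.s.\ convergence of $\tilde X^\varepsilon$ from $L^2([0,T],H)$-strong to $L^\infty([0,T],H)\cap L^2([0,T],\tilde H^{1,0})$-strong. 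The paper does this by an energy estimate on the difference $w^\varepsilon=\tilde X^\varepsilon-\tilde Y^\varepsilon-X^{\tilde v}$, mirroring the argument in Lemma~\ref{good rate function MDP}: once you know $w^\varepsilon\to 0$ in $L^2([0,T],H)$, you split integrals over $\{\|w^\varepsilon\|_H>\epsilon\}$ and its complement to force $\sup_t\|w^\varepsilon(t)\|_H^2+\int_0^T\|\partial_1 w^\varepsilon\|_H^2\,ds\to 0$.

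There is also a structural difference worth noting. Instead of carrying $W$ through the Skorokhod representation and arguing that the stochastic integral vanishes in $L^2(\tilde P)$, the paper introduces an auxiliary linear process $Y^\varepsilon$ solving $dY^\varepsilon=\partial_1^2 Y^\varepsilon\,dt+\lambda^{-1}(\varepsilon)\sigma(t,u^0+\sqrt{\varepsilon}\lambda(\varepsilon)X^\varepsilon)\,dW$, shows directly that $Y^\varepsilon\to 0$ in the strong $\tilde H^{0,1}$ and $\tilde H^{1,1}$ norms, and then passes $(X^\varepsilon,v^\varepsilon,Y^\varepsilon)$ through Skorokhod. The difference $U^\varepsilon=\tilde X^\varepsilon-\tilde Y^\varepsilon$ then satisfies a purely \emph{deterministic} equation pathwise, which sidesteps the need to make sense of stochastic integrals on the new probability space and makes the subsequent energy estimate clean. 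Your route via $W$ is workable, but you would still owe the strong-topology upgrade, and the $Y^\varepsilon$ trick both removes the stochastic integral and supplies exactly the $\tilde H^{1,1}$ control needed in that final estimate.
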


\begin{proof}
The proof follows essentially the same argument as in \cite[Proposition 4.7]{WZZ}. 

By Lemma \ref{Girsanov thm to prove existence MDP}, we have $X^\varepsilon=g^\varepsilon\left(W(\cdot)+\lambda(\varepsilon)\int^\cdot_0v^\varepsilon(s)ds\right)$. By a similar  argument as in the proof of Lemmas \ref{wellposedness for skeleton eq. MDP} and \ref{estimate eq. for weak convergence}, there exists a unique strong solution $$Y^\varepsilon\in L^{\infty}([0,T],\tilde{H}^{0,1})\bigcap L^2([0,T],\tilde{H}^{1,1})\bigcap C([0,T],H^{-1})$$ satisfying
\begin{align*}
dY^\varepsilon(t)=&\partial_1^2Y^\varepsilon(t)dt+\lambda^{-1}(\varepsilon)\sigma(t,u^0+\sqrt{\varepsilon}\lambda(\varepsilon) X^\varepsilon(t))dW(t),\\
Y^\varepsilon(0)=&0,
\end{align*}
and 
\begin{align*}
\lim_{\varepsilon\rightarrow 0}\left[E\sup_{t\in[0,T]}\|Y^\varepsilon(t)\|_{H}^2+E\int^T_0\|Y^\varepsilon(t)\|^2_{\tilde{H}^{1,0}}dt\right]=0,
\end{align*}
\begin{align*}
\lim_{\varepsilon\rightarrow 0}\left[E\sup_{t\in[0,T]}(e^{-kg(t)}\|Y^\varepsilon(t)\|_{\tilde{H}^{0,1}}^2)+E\int^T_0e^{-kg(t)}\|Y^\varepsilon(t)\|^2_{\tilde{H}^{1,1}}dt\right]=0,
\end{align*}
where $g(t)=\int^t_0\|\partial_1 X^\varepsilon(s)\|^2_Hds$ and $k$ are the same as in (\ref{eq02 in lemma estimate eq. for weak convergence MDP}).

Set $$\Xi:=\left(\chi, \mathcal{S}_N,  L^\infty([0,T],H)\bigcap L^2([0,T],\tilde{H}^{1,0}) \bigcap C([0,T].H^{-1})\right).$$ The above limit implies that $Y^\varepsilon\rightarrow 0$  in $ L^\infty([0,T],H)\bigcap L^2([0,T],\tilde{H}^{1,0}) \bigcap C([0,T].H^{-1})$ almost surely as $\varepsilon\rightarrow 0$ (in the sense of subsequence). By Lemma \ref{tightness lemma} the family $\{(X^\varepsilon,v^\varepsilon)\}_{\varepsilon\in(0,\varepsilon_0)}$ is tight in $(\chi, \mathcal{S}_N)$. Let $(X_v, v, 0)$ be any limit point of $\{(X^\varepsilon, v^\varepsilon, Y^\varepsilon)\}_{\varepsilon\in(0,\varepsilon_0)}$. Our goal is to show that $X_v$ has the same law as $g^0\left(\int^\cdot_0v(s)ds\right)$ and $X^\varepsilon$ convergence in distribution to $X_v$ in the space $ L^{\infty}([0,T],H)\bigcap L^2([0,T],\tilde{H}^{1,0})\bigcap C([0,T],H^{-1})$.

By  Jakubowski-Skorokhod's representation theorem (see \cite{Ja98} or \cite[Theorem 4.3]{LZZ18}), there exists a stochastic basis $(\tilde{\Omega}, \tilde{\mathcal{F}}, \{\tilde{\mathcal{F}}_t\}_{t\in[0,T]}, \tilde{P})$ and, on this basis, $\Xi$-valued random variables $(\tilde{X}_v, \tilde{v}, 0)$, $(\tilde{X}^\varepsilon, \tilde{v}^\varepsilon, \tilde{Y}^\varepsilon)$, such that $(\tilde{X}^\varepsilon, \tilde{v}^\varepsilon, \tilde{Y}^\varepsilon)$ (respectively $(\tilde{X}_v, \tilde{v}, 0)$) has the same law as $(X^\varepsilon,v^\varepsilon, Y^\varepsilon)$ (respectively $(X_v,v,0)$), and $(\tilde{X}^\varepsilon, \tilde{v}^\varepsilon, \tilde{Y}^\varepsilon)\rightarrow (\tilde{X}_v, \tilde{v}, 0)$, $\tilde{P}$-a.s.

We have
\begin{equation}\label{weak convergence step 1 MDP}\aligned
d(\tilde{X}^\varepsilon(t)-\tilde{Y}^\varepsilon(t))=&\partial_1^2(\tilde{X}^\varepsilon(t)-\tilde{Y}^\varepsilon(t))dt-B(\tilde{X}^\varepsilon, u^0+\sqrt{\varepsilon}\lambda(\varepsilon)\tilde{X}^\varepsilon)dt\\
&-B(u^0, \tilde{X}^\varepsilon)dt+\sigma(t,u^0+\sqrt{\varepsilon}\lambda(\varepsilon)\tilde{X}^\varepsilon(t))\tilde{v}^\varepsilon(t)dt,\\
\tilde{X}^\varepsilon(0)-\tilde{Y}^\varepsilon(0)=&0,
\endaligned
\end{equation}
and
\begin{align*}
&P(\tilde{X}^\varepsilon-\tilde{Y}^\varepsilon\in L^\infty([0,T],H)\bigcap L^2([0,T], \tilde{H}^{1,0})\bigcap C([0,T],H^{-1}))\\
=&P({X}^\varepsilon-{Y}^\varepsilon\in L^\infty([0,T],H)\bigcap L^2([0,T], \tilde{H}^{1,0})\bigcap C([0,T],H^{-1}))\\
=&1.
\end{align*}

Let $\tilde{\Omega}_0$ be the subset of $\tilde{\Omega}$ such that for $\omega\in\tilde{\Omega}_0$,
$$(\tilde{X}^\varepsilon, \tilde{v}^\varepsilon, \tilde{Y}^\varepsilon)(\omega)\rightarrow (\tilde{X}_v,\tilde{v}, 0)(\omega) \text{ in }\Xi,$$ 
and 
$$e^{-k\int^{\cdot}_0\|\tilde{X}^\varepsilon(\omega, s)\|^2_Hds}\tilde{Y}^\varepsilon(\omega)\rightarrow 0 \text{ in } L^{\infty}([0,T],\tilde{H}^{0,1})\bigcap L^2([0,T],\tilde{H}^{1,1})\bigcap C([0,T],H^{-1}),$$ 
then $P(\tilde{\Omega}_0)=1$. For any $\omega\in\tilde{\Omega}_0$, fix $\omega$, we have $\sup_{\varepsilon}\int^T_0\|\tilde{X}^\varepsilon(\omega,s)\|_H^2ds<\infty$, then we deduce that 
\begin{equation}\label{convergence of Yvarepsilon to 0 MDP}
\lim_{\varepsilon\rightarrow 0} \left(\sup_{t\in[0,T]}\|\tilde{Y}^\varepsilon(\omega,t)\|_{\tilde{H}^{0,1}}+\int^T_0\|\tilde{Y}^\varepsilon(\omega,t)\|^2_{\tilde{H}^{1,1}}dt\right)=0.
\end{equation}

Now we  show that
\begin{equation}\label{weak convergence step 2 MDP}
\sup_{t\in[0,T]}\|\tilde{X}^\varepsilon(\omega,t)-\tilde{X}_v(\omega,t)\|^2_H+\int^T_0\|\tilde{X}^\varepsilon(\omega, t)-\tilde{X}_v(\omega,t)\|^2_{\tilde{H}^{1,0}}dt\rightarrow 0\text{ as }\varepsilon\rightarrow 0.
\end{equation}

Let  $U^\varepsilon=\tilde{X}^\varepsilon(\omega)-\tilde{Y}^\varepsilon(\omega)$, then by  (\ref{weak convergence step 1 MDP}) we have
\begin{equation}\aligned
dU^\varepsilon(t)=&\partial_1^2U^\varepsilon(t) dt-B(U^\varepsilon+\tilde{Y}^\varepsilon, u^0+\sqrt{\varepsilon}\lambda(\varepsilon)(U^\varepsilon+\tilde{Y}^\varepsilon))dt\\
&-B(u^0, U^\varepsilon+\tilde{Y}^\varepsilon)+\sigma(t,u^0+\sqrt{\varepsilon}\lambda(\varepsilon) (U^\varepsilon(t)+\tilde{Y}^\varepsilon(t)))\tilde{v}^\varepsilon(t)dt.
\endaligned
\end{equation}
Since $U^\varepsilon(\omega)\rightarrow \tilde{X}_v(\omega)$  in $\chi$, by a very similar argument as in Lemma \ref{good rate function MDP} we deduce that  $\tilde{X}_v=X^{\tilde{v}}=g^0\left(\int^\cdot_0{\tilde{v}}(s)ds\right)$. Moreover, note that $\tilde{X}^\varepsilon(\omega)\rightarrow X^{\tilde{v}}(\omega)$ weak star in $L^{\infty}([0,T],\tilde{H}^{0,1})$, then the uniform boundedness principle implies that 
\begin{equation}\label{uniform bded of Z in H01 MDP}
\sup_{\varepsilon}\sup_{t\in[0,T]}\|\tilde{X}^\varepsilon(\omega)\|_{\tilde{H}^{0,1}}<\infty.
\end{equation}
Let $w^\varepsilon=U^\varepsilon-X^{\tilde{v}}$, then we have
\begin{align*}
&\|w^\varepsilon(t)\|^2_H+2\int^t_0\|\partial_1w^\varepsilon(s)\|^2_Hds\\
=&-2\int^t_0\langle w^\varepsilon(s), B(U^\varepsilon+\tilde{Y}^\varepsilon, u^0+\sqrt{\varepsilon}\lambda(\varepsilon)(U^\varepsilon+\tilde{Y}^\varepsilon))-B(X^{\tilde{v}},u^0)\rangle ds\\
&-2\int^t_0\langle w^\varepsilon(s), B(u^0, w^\varepsilon+\tilde{Y}^\varepsilon)\rangle ds\\
&+2\int^t_0\langle w^\varepsilon(s), \sigma(s,u^0+\sqrt{\varepsilon}\lambda(\varepsilon)(U^\varepsilon+\tilde{Y}^\varepsilon)){\tilde{v}}^\varepsilon(s)-\sigma(s,u^0){\tilde{v}}(s)\rangle ds\\
=:&I_1+I_2+I_3.
\end{align*}
By Lemma \ref{b(u,v,w)}, we have
\begin{align*}
&|I_1+I_2|\\
=&|\int^t_0b(w^\varepsilon, u^0+\sqrt{\varepsilon}\lambda(\varepsilon)(X^{\tilde{v}}+\tilde{Y}^\varepsilon), w^\varepsilon)+b(\tilde{Y}^\varepsilon, u^0, w^\varepsilon)\\
&+\sqrt{\varepsilon}\lambda(\varepsilon) b(X^{\tilde{v}}+\tilde{Y}^\varepsilon, X^{\tilde{v}}+\tilde{Y}^\varepsilon, w^\varepsilon)+b(u^0, \tilde{Y}^\varepsilon, w^\varepsilon)ds|\\
\leqslant &\int^t_0[\frac{1}{2}\|\partial_1w^\varepsilon(s)\|^2_H+C(1+\|u^0(s)\|^2_{\tilde{H}^{1,1}}+\|X^{\tilde{v}}(s)\|_{\tilde{H}^{1,1}}^2+\|\tilde{Y}^\varepsilon(s)\|^2_{\tilde{H}^{1,1}})\|w^\varepsilon(s)\|^2_H]ds\\
&+\int^t_0[\|\tilde{Y}^\varepsilon(s)\|^2_{\tilde{H}^{1,0}}+C\|u^0(s)\|^2_{\tilde{H}^{1,1}}\|w^\varepsilon(s)\|^2_H]ds\\
&+\sqrt{\varepsilon}\lambda(\varepsilon)\int^t_0[\|X^{\tilde{v}}(s)\|^2_{\tilde{H}^{1,0}}+\|\tilde{Y}^\varepsilon(s)\|^2_{\tilde{H}^{1,0}}+(\|X^{\tilde{v}}(s)\|_{\tilde{H}^{1,1}}^2+\|\tilde{Y}^\varepsilon(s)\|^2_{\tilde{H}^{1,1}})\|w^\varepsilon(s)\|^2_H]ds\\
&+\int^t_0[\|\tilde{Y}^\varepsilon(s)\|^2_{\tilde{H}^{1,1}}+C\|u^{0}(s)\|^2_{\tilde{H}^{1,0}}\|w^\varepsilon(s)\|^2_H]ds\\
\leqslant &\int^t_0[\frac{1}{2}\|\partial_1w^\varepsilon(s)\|^2_H+C(1+\|u^0(s)\|^2_{\tilde{H}^{1,1}}+\|X^{\tilde{v}}(s)\|_{\tilde{H}^{1,1}}^2)\|w^\varepsilon(s)\|^2_H]ds\\
+& C\int^t_0\|\tilde{Y}^\varepsilon(s)\|^2_{\tilde{H}^{1,1}}ds+\sqrt{\varepsilon}\lambda(\varepsilon)\int^t_0\|X^{\tilde{v}}(s)\|^2_{\tilde{H}^{1,0}}ds.
\end{align*}
where we  used the fact that by \eqref{convergence of Yvarepsilon to 0 MDP} and \eqref{uniform bded of Z in H01 MDP} $w^\varepsilon$ are uniformly bounded in $L^\infty([0,T], H)$ in the last inequality.
By (A1) and (A3)  we have
\begin{align*}
|I_3(t)|=&\int^t_0\langle w^\varepsilon(s), (\sigma(s,u^0+\sqrt{\varepsilon}\lambda(\varepsilon)[U^\varepsilon+\tilde{Y}^\varepsilon])-\sigma(s,u^0)) {\tilde{v}}^\varepsilon(s)\rangle ds\\
&+\int^t_0\langle w^\varepsilon(s), \sigma(s,u^0)({\tilde{v}}^\varepsilon(s)- {\tilde{v}}(s))\rangle ds\\
\leqslant &C(\sqrt{\varepsilon}\lambda(\varepsilon))^\frac{1}{2}\int^t_0(\|w^\varepsilon(s)\|_H\|{\tilde{v}}^\varepsilon(s)\|_{l^2}(\|w^
\varepsilon(s)\|^2_{\tilde{H}^{1,0}}+\|X^{\tilde{v}}(s)\|^2_{\tilde{H}^{1,0}}+\|\tilde{Y}^\varepsilon(s)\|^2_{\tilde{H}^{1,0}})^\frac{1}{2} ds\\
&+\int^t_0\|w^\varepsilon(s)\|_H\|{\tilde{v}}^\varepsilon(s)-{\tilde{v}}(s)\|_{l^2}(K_0+K_1\|u^0(s)\|^2_H+K_2\|\partial_1 u^0(s)\|^2_H)^\frac{1}{2} ds\\
\leqslant & (\sqrt{\varepsilon}\lambda(\varepsilon))^\frac{1}{2}\left(CN+C_1\int^t_0(\|w^
\varepsilon(s)\|^2_{\tilde{H}^{1,0}}+\|X^{\tilde{v}}(s)\|^2_{\tilde{H}^{1,0}}+\|\tilde{Y}^\varepsilon(s)\|^2_{\tilde{H}^{1,0}} ds\right)\\
&+CN^\frac{1}{2}\left(\int^t_0\|w^\varepsilon(s)\|_H^2(K_0+K_1\|u^0(s)\|^2_H+K_2\|\partial_1 u^0(s)\|^2_H) ds\right)^\frac{1}{2},
\end{align*}
where we used the fact that $w^\varepsilon$ are uniformly bounded in $L^\infty([0,T], H)$ and that ${\tilde{v}}^\varepsilon$,  ${\tilde{v}}$ are in $\mathcal{A}_N$. Note here $C_1$ is a positive constant.
Thus choose $\varepsilon$ small enough such that $\frac{1}{2}+(\sqrt{\varepsilon}\lambda(\varepsilon))^\frac{1}{2}C_1<1$,  we have
\begin{align*}
&\|w^\varepsilon(t)\|^2_H+\int^t_0\|\partial_1w^\varepsilon(s)\|^2_Hds\\
\leqslant& C\int^t_0(1+\|u^0(s)\|^2_{\tilde{H}^{1,1}}+\|X^{\tilde{v}}(s)\|_{\tilde{H}^{1,1}}^2)\|w^\varepsilon(s)\|^2_Hds\\
&+C\int^t_0\|\tilde{Y}^\varepsilon(s)\|^2_{\tilde{H}^{1,1}}ds+\sqrt{\varepsilon}\lambda(\varepsilon)\int^t_0\|X^{\tilde{v}}(s)\|^2_{\tilde{H}^{1,0}}ds\\
&+C (\sqrt{\varepsilon}\lambda(\varepsilon))^\frac{1}{2}\left(N+\int^t_0(\|w^
\varepsilon(s)\|^2_H+\|X^{\tilde{v}}(s)\|^2_{\tilde{H}^{1,0}}+\|\tilde{Y}^\varepsilon(s)\|^2_{\tilde{H}^{1,0}} ds\right)\\
&+CN^\frac{1}{2}\left(\int^t_0(1+\|u^0(s)\|^2_{\tilde{H}^{1,1}})\|w^\varepsilon(s)\|^2_Hds\right)^\frac{1}{2}.
\end{align*}

Since $U^\varepsilon(\omega)\rightarrow X^{\tilde{v}}(\omega)$ strongly in $L^2([0,T],H)$ and $\tilde{Y}^\varepsilon\rightarrow 0$ in $L^2([0,T],\tilde{H}^{1,1})$, the same argument used in Lemma \ref{good rate function MDP} implies
\begin{equation}\label{weak convergence step 3 MDP}
\sup_{t\in[0,T]}\|\tilde{X}^\varepsilon(\omega,t)-X^{\tilde{v}}(\omega,t)\|^2_H+\int^T_0\|\tilde{X}^\varepsilon(\omega, t)-X^{\tilde{v}}(\omega,t)\|^2_{\tilde{H}^{1,0}}dt\rightarrow 0\text{ as }\varepsilon\rightarrow 0.
\end{equation}

The proof is thus complete.

\end{proof}

\begin{proof}[{Proof of Theorem \ref{main result MDP}}]
The result holds from Lemmas \ref{weak convergence method}, \ref{good rate function MDP} and \ref{weak convergence MDP}.
\end{proof}

\appendix
\section{Appendix}

We first  present several  lemmas from \cite[Appendix]{CZ20}:

\begin{lemma}\label{b(u,v,w)}
For smooth functions $u,v,w$ form $\mathbb{T}^2$ to $\mathbb{R}^2$ with divergence free condition, we have
$$|b(u,v,w)|\leqslant C\|u\|_{H^{1,0}}\|v\|_{H^{1,1}}\|w\|_{L^2}.$$
\end{lemma}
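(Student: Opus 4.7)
The plan is to decompose the trilinear form as
\[
b(u,v,w) \;=\; \sum_{j=1}^{2} \int_{\mathbb{T}^2} u^1\,\partial_1 v^j\,w^j\,dx \;+\; \sum_{j=1}^{2} \int_{\mathbb{T}^2} u^2\,\partial_2 v^j\,w^j\,dx \;=: I_1 + I_2
\]
and to estimate each piece by anisotropic H\"older inequalities followed by one-dimensional Sobolev / Agmon embeddings in either the horizontal or the vertical variable. Since $\|u\|_{H^{1,0}}$ only controls $\partial_1 u$, whereas $\|v\|_{H^{1,1}}$ controls $\partial_2 v$ and $\partial_1\partial_2 v$ but not $\partial_2^2 v$, the allocation of $L^\infty$-norms between the two directions is essentially forced.

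For $I_1$ I would apply the mixed-norm H\"older inequality
\[
\int |u^1\,\partial_1 v^j\,w^j|\,dx \;\leqslant\; \|u^1\|_{L^\infty_h(L^2_v)}\,\|\partial_1 v^j\|_{L^2_h(L^\infty_v)}\,\|w^j\|_{L^2}.
\]
The first factor is controlled by introducing $G(x_1):=\|u^1(x_1,\cdot)\|^2_{L^2_v}$ and using that on the 1D torus $\|G\|_{L^\infty_h} \leqslant \bar G + \|G'\|_{L^1_h}$; since $|G'(x_1)| \leqslant 2\|u^1(x_1,\cdot)\|_{L^2_v}\|\partial_1 u^1(x_1,\cdot)\|_{L^2_v}$, Cauchy--Schwarz yields $\|u^1\|^2_{L^\infty_h(L^2_v)} \leqslant C\|u\|_{L^2}(\|u\|_{L^2}+\|\partial_1 u\|_{L^2}) \leqslant C\|u\|^2_{H^{1,0}}$. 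The second factor is handled by the 1D Agmon inequality $\|f\|^2_{L^\infty_v} \leqslant C\|f\|_{L^2_v}(\|f\|_{L^2_v}+\|\partial_2 f\|_{L^2_v})$ applied to $f=\partial_1 v^j(x_1,\cdot)$, then integrated in $x_1$ and paired with Cauchy--Schwarz, yielding $\|\partial_1 v^j\|^2_{L^2_h(L^\infty_v)} \leqslant C\|\partial_1 v\|_{L^2}(\|\partial_1 v\|_{L^2}+\|\partial_1\partial_2 v\|_{L^2}) \leqslant C\|v\|^2_{H^{1,1}}$.

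The main obstacle is $I_2$: the natural H\"older triple from the $I_1$ estimate would require placing $\partial_2 v$ in $L^\infty_v$, which in turn would require $\partial_2^2 v \in L^2$, a quantity not controlled by $\|v\|_{H^{1,1}}$. The remedy is to swap horizontal and vertical roles in the H\"older inequality:
\[
\int|u^2\,\partial_2 v^j\,w^j|\,dx \;\leqslant\; \|u^2\|_{L^2_h(L^\infty_v)}\,\|\partial_2 v^j\|_{L^\infty_h(L^2_v)}\,\|w^j\|_{L^2}.
\]
The middle factor is treated verbatim as $\|u^1\|_{L^\infty_h(L^2_v)}$ above (now with $v$ in place of $u$ and $\partial_1\partial_2$ in place of $\partial_1$), giving $\leqslant C\|v\|_{H^{1,1}}$. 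For $\|u^2\|_{L^2_h(L^\infty_v)}$ the vertical Agmon inequality produces $\|u^2(x_1,\cdot)\|^2_{L^\infty_v} \leqslant C\|u^2(x_1,\cdot)\|_{L^2_v}(\|u^2(x_1,\cdot)\|_{L^2_v}+\|\partial_2 u^2(x_1,\cdot)\|_{L^2_v})$, and here the divergence-free condition is crucial: $\partial_2 u^2 = -\partial_1 u^1$, so the unwanted vertical derivative is converted into the horizontal one and, after integrating in $x_1$ and applying Cauchy--Schwarz, we obtain $\|u^2\|_{L^2_h(L^\infty_v)} \leqslant C\|u\|_{H^{1,0}}$. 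Combining the bounds for $I_1$ and $I_2$ yields the claim.
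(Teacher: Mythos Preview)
Your proof is correct and follows essentially the same approach as the paper's own proof: the same splitting into $I_1$ and $I_2$, the same anisotropic H\"older triples $\|u^1\|_{L^\infty_h(L^2_v)}\|\partial_1 v\|_{L^2_h(L^\infty_v)}\|w\|_{L^2}$ and $\|u^2\|_{L^2_h(L^\infty_v)}\|\partial_2 v\|_{L^\infty_h(L^2_v)}\|w\|_{L^2}$, the same one-dimensional Agmon-type bounds on the mixed norms, and the same use of $\partial_2 u^2=-\partial_1 u^1$ to handle $\|u^2\|_{L^2_h(L^\infty_v)}$. Your write-up is simply more expository about the 1D embedding steps.
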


\begin{proof}
\begin{align*}
&|b(u, v, w)|\\
\leqslant &(\|u^1\|_{L^\infty_h(L^2_v)}\|\partial_1v\|_{L^2_h(L^\infty_v)}+\|u^2\|_{L^2_h(L^\infty_v)}\|\partial_2 v\|_{L^\infty_h(L^2_v)})\|w\|_{L^2}\\
\leqslant&C\Big{(}(\|u^1\|_{L^2}\|\partial_1u^1\|_{L^2}+\|u^1\|^2_{L^2})^\frac{1}{2}(\|\partial_1v\|_{L^2}\|\partial_1\partial_2v\|_{L^2}+\|\partial_1v\|^2_{L^2})^\frac{1}{2}\\
&+(\|u^2\|_{L^2}\|\partial_2u^2\|_{L^2}+\|u^2\|_{L^2}^2)^\frac{1}{2}(\|\partial_2v\|_{L^2}\|\partial_1\partial_2v\|_{L^2}+\|\partial_2v\|_{L^2}^2)^\frac{1}{2}\Big{)}\|w\|_{L^2}\\
\leqslant&C\|u\|_{H^{1,0}}\|v\|_{H^{1,1}}\|w\|_{L^2},
\end{align*}
where we used the divergence free condition to deal with the term $\partial_2u^2$ in the last inequality.
\end{proof}

\begin{lemma}\label{estimate for b with partial_2}
For smooth function $u$ form $\mathbb{T}^2$ to $\mathbb{R}^2$ with divergence free condition, we have
$$|\langle \partial_2u, \partial_2(u\cdot \nabla u)\rangle|\leqslant a\|\partial_1\partial_2 u\|^2_{L^2}+C(1+\|\partial_1 u\|^2_{L^2})\|\partial_2 u\|^2_{L^2},$$
where $a>0$ is a constant small enough.
\end{lemma}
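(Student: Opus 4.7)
The plan is to first reduce the bilinear inner product to a single trilinear form on $\partial_2 u$, and then estimate it by an anisotropic H\"older/1D Sobolev argument engineered to avoid the uncontrolled quantity $\|\partial_2^2 u\|_{L^2}$. Expanding $\partial_2(u\cdot\nabla u)=\partial_2 u\cdot\nabla u+u\cdot\nabla\partial_2 u$ gives
$$\langle\partial_2 u,\partial_2(u\cdot\nabla u)\rangle=b(\partial_2 u,u,\partial_2 u)+b(u,\partial_2 u,\partial_2 u),$$
and the second term vanishes since $\mathrm{div}\,u=0$ implies $b(u,w,w)=0$. Everything therefore reduces to estimating
$$|b(\partial_2 u,u,\partial_2 u)|=\Bigl|\sum_{i,j=1}^{2}\int_{\mathbb{T}^2}\partial_2 u^i\,\partial_i u^j\,\partial_2 u^j\,dx\Bigr|.$$

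Next I would split the sum according to $i=1,2$. The $i=1$ contribution is $\int\partial_2 u^1(\partial_1 u^j)(\partial_2 u^j)\,dx$, in which the middle factor already carries the ``good'' horizontal derivative $\partial_1$. For the $i=2$ contribution $\int\partial_2 u^2|\partial_2 u|^2\,dx$, use divergence-freeness in the form $\partial_2 u^2=-\partial_1 u^1$ to rewrite it as $-\int\partial_1 u^1|\partial_2 u|^2\,dx$; after this substitution the integral also has a single $\partial_1$-derivative on one factor and bare components of $\partial_2 u$ on the others. Both surviving integrals are then estimated by anisotropic H\"older with the triple of mixed Lebesgue spaces $(L^\infty_h(L^2_v),\,L^2_h(L^\infty_v),\,L^2)$: the $\partial_1$-factor goes into $L^2_h(L^\infty_v)$, one $\partial_2 u$-factor into $L^\infty_h(L^2_v)$, and the last one into $L^2$.

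Each mixed-space norm is then controlled by the fiberwise one-dimensional Sobolev embedding,
$$\|g\|_{L^\infty_h(L^2_v)}^2\leqslant C\|g\|_{L^2}\bigl(\|g\|_{L^2}+\|\partial_1 g\|_{L^2}\bigr),\qquad \|g\|_{L^2_h(L^\infty_v)}^2\leqslant C\|g\|_{L^2}\bigl(\|g\|_{L^2}+\|\partial_2 g\|_{L^2}\bigr),$$
applied with $g=\partial_1 u$ or $\partial_1 u^1$ for the second inequality, and with $g=\partial_2 u$ or $\partial_2 u^1$ for the first; in every case the derivative appearing on the right-hand side is either $\partial_1 u$ itself, $\partial_2 u$ itself, or $\partial_1\partial_2 u$. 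A round of Young's inequality then absorbs the $\|\partial_1\partial_2 u\|_{L^2}^2$ contributions into the term with small prefactor $a$ and collects the remainder as $C(1+\|\partial_1 u\|_{L^2}^2)\|\partial_2 u\|_{L^2}^2$.

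The main technical difficulty is precisely the one sidestepped in the second paragraph: a naive anisotropic H\"older split that placed $\partial_2 u$ into $L^2_h(L^\infty_v)$ would, via the vertical 1D Sobolev embedding, generate $\|\partial_2^2 u\|_{L^2}$, a quantity that does not appear on the right-hand side of the lemma and cannot be absorbed into the dissipative term $\|\partial_1\partial_2 u\|_{L^2}^2$ coming from the anisotropic viscosity. The divergence-free substitution $\partial_2 u^2\mapsto -\partial_1 u^1$, combined with systematically putting every $\partial_2 u$ factor into $L^\infty_h(L^2_v)$ (whose fiberwise Sobolev bound uses only the horizontal derivative), is what allows one to close the estimate without any $\partial_2^2$-norm on the right-hand side.
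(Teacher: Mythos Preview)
Your reduction to $b(\partial_2 u,u,\partial_2 u)$ and the divergence-free substitution $\partial_2 u^2=-\partial_1 u^1$ are correct, and the anisotropic H\"older/1D Sobolev machinery is the right toolkit. The gap is in the last step: the Young inequality you announce cannot close. After your H\"older and Sobolev bounds, each of the two ``surviving integrals'' is controlled by
$$C\,\|\partial_2 u\|_{L^2}^{3/2}\bigl(\|\partial_2 u\|_{L^2}+\|\partial_1\partial_2 u\|_{L^2}\bigr)^{1/2}\|\partial_1 u\|_{L^2}^{1/2}\bigl(\|\partial_1 u\|_{L^2}+\|\partial_1\partial_2 u\|_{L^2}\bigr)^{1/2}.$$
Write $A=\|\partial_2 u\|_{L^2}$, $B=\|\partial_1 u\|_{L^2}$, $D=\|\partial_1\partial_2 u\|_{L^2}$; you are then asserting the purely algebraic inequality $A^{3/2}(A+D)^{1/2}B^{1/2}(B+D)^{1/2}\le aD^2+C(1+B^2)A^2$. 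This is false: for the divergence-free field $u=(\sin x_1\cos Nx_2,\ -N^{-1}\cos x_1\sin Nx_2)$ one has $A\sim D\sim N$, $B\sim 1$, so the left side is $\sim N^{5/2}$ while the right side is $\sim N^2$.

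What you are missing is a further cancellation hidden inside your two integrals. The $j=1$ part of the $i=1$ contribution is $\int(\partial_2 u^1)^2\partial_1 u^1\,dx$, and the $(\partial_2 u^1)^2$ part of the rewritten $i=2$ contribution is $-\int\partial_1 u^1(\partial_2 u^1)^2\,dx$; these cancel exactly. What remains is
$$b(\partial_2 u,u,\partial_2 u)=\int\partial_2 u^1\,\partial_1 u^2\,\partial_2 u^2\,dx-\int\partial_1 u^1(\partial_2 u^2)^2\,dx,$$
in which every $\partial_2 u$-factor is actually $\partial_2 u^2=-\partial_1 u^1$, hence has $L^2$ norm bounded by $\min(A,B)$ rather than $A$. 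Running your anisotropic H\"older/Sobolev argument on these two terms (placing $\partial_2 u^2$ in $L^2$) now gives bounds of the form $\sqrt{A(A+D)}\sqrt{B(B+D)}\min(A,B)$ and $\min(A,B)^2(\min(A,B)+D)$, and a short case split $A\lessgtr B$ together with Young's inequality does close to $aD^2+C(1+B^2)A^2$. The paper itself does not give a proof (it is quoted from \cite{CZ20}), but this extra cancellation is the missing ingredient.
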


 The following estimates are obtained by \cite{CDGG00} in dimension 3, we now present its 2-dimension version.

\begin{lemma}[{\cite[Lemma 3]{CDGG00}}]\label{commutator estimates}
For any real number $s_0>\frac{1}{2}$ and $s\geqslant s_0$,  for any vector fields $u$ and $ w$, with divergence free condition, there exists  constants $C$ and $d_k(u,w)$ such that
\begin{align*}
|\langle \Delta^v_k(u\cdot \nabla w), \Delta^v_k w\rangle|\leqslant Cd_k2^{-2ks}\|w\|_{H^{\frac{1}{4},s}}(&\|u\|_{H^{\frac{1}{4},s_0}}\|\partial_1w\|_{H^{0,s}}+\|u\|_{H^{\frac{1}{4},s}}\|\partial_1w\|_{H^{0,s_0}}\\
&+\|\partial_1u\|_{H^{0,s_0}}\|w\|_{H^{\frac{1}{4},s}}+\|\partial_1u\|_{H^{0,s}}\|w\|_{H^{\frac{1}{4},s_0}}),
\end{align*}
where $\sum_{k}d_k= 1$.
\end{lemma}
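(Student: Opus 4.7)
The plan is to adapt the proof of \cite[Lemma 3]{CDGG00} from three to two dimensions. First I would write the algebraic identity
$$\Delta^v_k(u\cdot\nabla w) = u\cdot\nabla\Delta^v_k w + [\Delta^v_k,u^1]\partial_1 w + [\Delta^v_k,u^2]\partial_2 w,$$
and observe that the first piece contributes nothing to $\langle\Delta^v_k(u\cdot\nabla w),\Delta^v_k w\rangle$: since $u$ is divergence free, one has $\int u\cdot\nabla |(\Delta^v_k w)^i|^2\,dx = 0$ for each component $i$. Thus the problem reduces to estimating the two commutator terms $\langle [\Delta^v_k, u^j]\partial_j w, \Delta^v_k w\rangle$ for $j=1,2$.

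Next I would apply Bony's paraproduct decomposition in the vertical variable, writing
$$u^j\partial_j w = T^v_{u^j}\partial_j w + T^v_{\partial_j w}u^j + R^v(u^j, \partial_j w),$$
where $T^v$ and $R^v$ are the vertical paraproduct and remainder operators built from the blocks $\Delta^v_k$. After applying $\Delta^v_k$, standard spectral localization and the almost-orthogonality of the Littlewood-Paley blocks restrict the effective sum, producing the summable factor $d_k$ with $\sum_k d_k = 1$ after normalization. The commutator with the low-frequency paraproduct is controlled by a standard pointwise bound of the form $\|[\Delta^v_k, a]b\|_{L^2_v}\lesssim 2^{-k}\|\partial_2 a\|_{L^\infty_v}\|b\|_{L^2_v}$, and the remaining paraproduct and remainder pieces are estimated by anisotropic H\"older inequalities.

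The individual bounds rely on two anisotropic Sobolev embeddings: $H^{1/4}(\mathbb{T}_h)\hookrightarrow L^4(\mathbb{T}_h)$ (the one-dimensional Sobolev embedding in the horizontal variable) and $H^{s_0}(\mathbb{T}_v)\hookrightarrow L^\infty(\mathbb{T}_v)$, valid for $s_0>\frac{1}{2}$. A representative product estimate is
$$\|u^j\, \Delta^v_k w\|_{L^2}\le \|u^j\|_{L^4_h(L^\infty_v)}\|\Delta^v_k w\|_{L^4_h(L^2_v)}\lesssim \|u\|_{\tilde{H}^{1/4,s_0}}\|\Delta^v_k w\|_{\tilde{H}^{1/4,0}},$$
which, after absorbing the $2^{-ks}$ weight into the Littlewood-Paley norm, produces the norms $\|w\|_{\tilde{H}^{1/4,s}}$ and the factors of $u$ appearing on the right-hand side. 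Crucially, whenever a vertical derivative of $u^2$ would naturally arise in the analysis, the divergence-free identity $\partial_2 u^2 = -\partial_1 u^1$ lets me trade it for a horizontal derivative; this explains why only $\partial_1 u$ norms, and not $\partial_2 u$ norms, appear in the final estimate.

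The main obstacle I anticipate is the bookkeeping of the four terms on the right-hand side, corresponding to four different ways of allocating the regularities $s$ and $s_0$ between $u$ and $w$ (and between the high- and low-frequency factors of the paraproducts). Once the vertical dyadic decomposition and the divergence-free exchange are set up consistently, the remaining calculations are systematic rather than conceptually new.
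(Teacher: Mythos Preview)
Your proposal is correct and uses essentially the same toolkit as the paper: Bony's vertical paraproduct decomposition, the standard commutator bound $\|[\Delta^v_k,a]b\|\lesssim 2^{-k}\|\partial_2 a\|\,\|b\|$, the one-dimensional embedding $H^{1/4}_h\hookrightarrow L^4_h$, and the divergence-free exchange $\partial_2 u^2=-\partial_1 u^1$.

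The only organizational difference is this: you subtract the full transport term $u\cdot\nabla\Delta^v_k w$ at the outset (it pairs to zero with $\Delta^v_k w$), and then estimate both commutators $[\Delta^v_k,u^j]\partial_j w$. The paper instead splits into $F^h_k=\Delta^v_k(u^1\partial_1 w)$ and $F^v_k=\Delta^v_k(u^2\partial_2 w)$ and treats the horizontal piece \emph{directly} via Bony decomposition, without any commutator structure (the $\partial_1$ is already the good derivative, so no gain is needed). The commutator trick is reserved for the vertical piece $F^{v,2}_k=\Delta^v_k T^v_{u^2}\partial_2 w$, where the paper extracts the principal term $S^v_{k-1}u^2\,\Delta^v_k\partial_2 w$, integrates by parts in $x_2$, and then uses $\partial_2 u^2=-\partial_1 u^1$. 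Your global commutator identity is cleaner conceptually; the paper's separation is slightly more direct for the horizontal part and makes the four right-hand-side terms appear more transparently. Either route works.
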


\begin{proof}
Define 
$$F^h_k=\Delta^v_k(u^1\partial_1w)\text{ and }F^v_k=\Delta^v_k(u^2\partial_2w).$$

Let us start  by proving the result for $F^h_k$.  Recall the Bony decomposition (see \cite{BCD11}) in vertical variables for tempered distributions $a,b$:
$$ab=T^v_ab+T^v_ba+R^v(a,b),$$
with
$$T^v_ab=\sum_{j}S^v_{j-1}a\Delta^v_jb\quad \text{ and }\quad R^v(a,b)=\sum_{|k-j|\leqslant 1}\Delta^v_ka\Delta^v_jb,$$ 
where $S^v_{j-1}a=\sum_{j'\leqslant j-2}\Delta^v_{j'}a$.

Then we have by H\"older's inequality and Sobolev embedding $H^{\frac{1}{4}}(\mathbb{T})\hookrightarrow L^{4}(\mathbb{T})$
\begin{equation}\label{F h k}\aligned
\langle \Delta^v_k(u^1\partial_1w), \Delta^v_k w \rangle\leqslant& \|\Delta^v_k(u^1\partial_1w)\|_{L^2_v(L_h^{\frac{4}{3}})}\|\Delta^v_k w\|_{L^2_v(L_h^4)}\\
\leqslant &C\|\Delta^v_k(T^v_{u^1}\partial_1w+T^v_{\partial_1w}u^1+R^v(u^1,\partial_1w))\|_{L^2_v(L_h^{\frac{4}{3}})}\|\Delta^v_kw\|_{L^2_v(H_h^{\frac{1}{4}})}\\
\leqslant & C\|\Delta^v_k(T^k_{u^1}\partial_1w+T^v_{\partial_1w}u^1+R^v(u^1,\partial_1w))\|_{L^2_v(L_h^{\frac{4}{3}})} 2^{-ks} c_k\|w\|_{H^{\frac{1}{4}, s}},
\endaligned
\end{equation}
where $c_k=\frac{2^{ks}\|\Delta^k_vw \|_{L^2_v(H_h^{\frac{1}{4}})}}{\|w\|_{H^{\frac{1}{4}, s}}}\in l^2$.
 For the first  term of the third line, we have 
\begin{align*}
&\|\Delta^v_k(T^k_{u^1}\partial_1w)\|_{L^2_v(L_h^\frac{4}{3})}\\
\leqslant& \sum_{|k-k'|\leqslant N_0} \|S^v_{k'-1} u^1\Delta^v_{k'}\partial_1w\|_{L^2_v(L_h^{\frac{4}{3}})}\leqslant\sum_{|k-k'|\leqslant N_0} \|S^v_{k'-1} u^1\|_{L^\infty_v(L_h^4)}\|\Delta^v_{k'}\partial_1w\|_{L^2_v(L_h^{2})} \\
\leqslant &C \sum_{|k-k'|\leqslant N_0}  \|u^1\|_{H^{\frac{1}{4},s_0}} 2^{-k's}b_{k'}\|\partial_1w\|_{H^{0,s}} \leqslant C b^{(1)}_k2^{-ks}\|u^1\|_{H^{\frac{1}{4},s_0}}\|\partial_1w\|_{H^{0,s}},
\end{align*}
where $b_k=\frac{2^{ks}\|\Delta^v_{k}\partial_1w\|_{L^2_v(L_h^{2})} }{\|\partial_1w\|_{H^{0,s}}}\in l^2$ and $b^{(1)}_k=2^{ks}\sum_{|k-k'|\leqslant N_0} 2^{-k's}b_{k'}\in l^2$. Note here $N_0$ depends on the choice of Dyadic partition. For the second term, similarly we have
\begin{align*}
\|\Delta^v_k(T^k_{\partial_1w}u^1)\|_{L^2_v(L_h^\frac{4}{3})}\leqslant& \sum_{|k-k'|\leqslant N_0} \|S^v_{k'-1}\partial_1w \|_{L^\infty_v(L_h^2)}\|\Delta^v_{k'}u^1\|_{L^2_v(L_h^{4})} \\
\leqslant &C \sum_{|k-k'|\leqslant N_0}  \|\partial_1w\|_{H^{0,s_0}} 2^{-k's}a_{k'}\|u\|_{H^{\frac{1}{4},s}} \leqslant C a^{(1)}_k2^{-ks}\|\partial_1w\|_{H^{0,s_0}}\|u\|_{H^{\frac{1}{4},s}},
\end{align*}
where $a_k=\frac{2^{ks}\|\Delta^v_{k}u\|_{L^2_v(H_h^{\frac{1}{4}})} }{\|u\|_{H^{\frac{1}{4},s}}}\in l^2$ and $a^{(1)}_k=2^{ks}\sum_{|k-k'|\leqslant N_0} 2^{-k's}\tilde{c}_k\in l^2$. 
\begin{align*}
\|\Delta^v_kR^v(u^1,\partial_1w)\|_{L^2_v(L_h^{\frac{4}{3}})}
\leqslant &\sum_{|k'-j|\leqslant 1, k'\geqslant k- N_0}\|\Delta^v_{k'}u^1\|_{L^2_v(L_h^{{4}})}\|\Delta^v_j\partial_1w\|_{L^\infty_v(L^2_h)}\\
\leqslant& C\sum_{k'\geqslant k- N_0}2^{-k's} a_{k'}\|u\|_{H^{\frac{1}{4},s}}\|\partial_1w\|_{H^{0,s_0}}\\
 \leqslant & Ca^{(2)}_k2^{-ks}\|u\|_{H^{\frac{1}{4},s}}\|\partial_1 w\|_{H^{0,s_0}},
\end{align*}
where $a^{(2)}_k=2^{ks}\sum_{k'\geqslant k-N_0} 2^{-k's}a_{k'}=\sum_{k'\in \mathbb{Z}} I_{\{k'\leqslant N_0\}}2^{k's} a_{k-k'}$ and by Young's convolution inequality 
$$\|a^{(2)}\|_{l^2}\leqslant \|I_{\{k'\leqslant N_0\}}2^{k's}\|_{l^1}\|a\|_{l^2}<\infty.$$

This implies that 
$$|\langle F^h_k, \Delta^v_k w\rangle|\leqslant C c_k(b^{(1)}_k+a^{(1)}_k+a^{(2)}_k) 2^{-2ks}\|w\|_{H^{\frac{1}{4},s}}(\|u\|_{H^{\frac{1}{4},s_0}}\|\partial_1w\|_{H^{0,s}}+\|u\|_{H^{\frac{1}{4},s}}\|\partial_1 w\|_{H^{0,s_0}}),$$
where $  c_k(b^{(1)}_k+a^{(1)}_k+a^{(2)}_k)\in l^1$.

To estimate the term $\langle F^v_k, \Delta^v_kw\rangle$, write $\Delta^v_k(u^2\partial_2w) =F^{v,1}_k+ F^{v,2}_k$ with 
\begin{align*}
F^{v,1}_k= \Delta^v_k\sum_{k'\geqslant k-N_0} S^v_{k'+2}\partial_2w\Delta^v_{k'}u^2\quad\text{and}\quad 
F^{v,2}_k=\Delta^v_k \sum_{|k-k'|\leqslant N_0}S^v_{k'-1}u^2\Delta^v_{k'}\partial_2w.
\end{align*}

For $F^{v,1}_k$, again we have by   H\"older's inequality and Sobolev embedding, 
\begin{align*}
\|F^{v,1}_k\|_{L^2_v(L_h^{\frac{4}{3}})}\leqslant& \sum_{k'\geqslant k-N_0}\| S^v_{k'+2}\partial_2w\|_{L^\infty_v(L_h^4)}\|\Delta^v_{k'}u^2\|_{L^2_v(L_h^{2})}\\
\leqslant& C \sum_{k'\geqslant k-N_0}2^{k'}\| S^v_{k'+2}w\|_{L^\infty_v(L_h^{4})}2^{-k'}\|\Delta^v_{k'}\partial_2 u^2\|_{L^2_v(L^2_h)}\\
\leqslant &C \sum_{k'\geqslant k-N_0} \|w\|_{H^{\frac{1}{4},s_0}}2^{-k's}\tilde{c}_{k'}\|\partial_1u\|_{H^{0,s}}\\
\leqslant &C 2^{-ks} \tilde{c}^{(2)}_k\|w\|_{H^{\frac{1}{4},s_0}}\|\partial_1u\|_{H^{0,s}},
\end{align*}
where we use Bernstein's inequality twice in the second inequality and divergence free condition in the third inequality. Note here $\tilde{c}_{k}=\frac{2^{ks}\|\Delta^v_{k}\partial_1 u\|_{L^2_v(L^2_h)} }{\|\partial_1 u\|_{H^{0,s}}}\in l^2$ and $\tilde{c}^{(2)}_k=2^{ks}\sum_{k'\geqslant k-N_0} 2^{-k's}\tilde{c}_{k'}\in l^2$.

Then similar as \eqref{F h k} we have
$$|\langle F^{v,1}_k, \Delta^v_k w\rangle|\leqslant C c_k\tilde{c}^{(2)}_k 2^{-2ks}\|w\|_{H^{\frac{1}{4},s}} \|w\|_{H^{\frac{1}{4},s_0}}\|\partial_1u\|_{H^{0,s}}.$$
The last term $F^{v,2}_k$ requires commutator estimates.  Following a computation  in \cite{CL92}, we have
\begin{align*}
\langle F^{v,2}_k, \Delta^v_k w\rangle =& \langle S^v_{k-1}u^2\Delta^v_k\partial_2 w, \Delta^v_kw\rangle+R_k(u,w)\quad \text{with}\\
R_k(u,v)=&\sum_{|k-k'|\leqslant N_0}\langle [\Delta^v_k, S^v_{k'-1}u^2]\Delta^v_{k'}\partial_2w, \Delta^v_k w\rangle \\
&-\sum_{|k'-k|\leqslant N_0}\langle (S^v_{k-1}-S^v_{k'-1})u^2\Delta^v_k\Delta^v_{k'}\partial_2w, \Delta^v_kw\rangle.
\end{align*}

Using  an integration by parts and divergence free condition,  we have
\begin{equation}\label{commutator integration by part}\aligned
|\langle S^v_{k-1}u^2\Delta^v_k\partial_2 w, \Delta^v_kw\rangle|=&\frac{1}{2}|\langle S^v_k\partial_2u^2\Delta^v_kw,\Delta^v_kw\rangle|=\frac{1}{2}|\langle S^v_k\partial_1u^1\Delta^v_kw,\Delta^v_kw\rangle|\\
\leqslant & C \|S^v_k\partial_1u^1\|_{L^\infty_v(L^2_h)}\|\Delta^v_kw\|_{L^2_v(L^4_h)}^2\\
\leqslant &Cc^2_k2^{-2ks}\|\partial_1u\|_{H^{0,s_0}}\|w\|_{H^{\frac{1}{4},s}}^2.
\endaligned
\end{equation}

Note that the Fourier transform of  $(S^v_{k-1}-S^v_{k'-1})u^2$ is supported in $2^k\mathcal{A}$ since $|k-k'|\leqslant N_0$ where $\mathcal{A}$ is an annulus.  We have by Bernstein's inequality
\begin{align*}
&\|\sum_{|k'-k|\leqslant N_0} (S^v_{k-1}-S^v_{k'-1})u^2\Delta^v_k\Delta^v_{k'}\partial_2w\|_{L^2_v(L_h^{\frac{4}{3}})}\\
\leqslant &\sum_{|k'-k|\leqslant N_0} \| (S^v_{k-1}-S^v_{k'-1})u^2\|_{L^\infty_v(L^2_h)}\|\Delta^v_k\Delta^v_{k'}\partial_2w\|_{L^2_v(L^4_h)}\\
\leqslant &C\sum_{|k'-k|\leqslant N_0} 2^k\| (S^v_{k-1}-S^v_{k'-1})\partial_2 u^2\|_{L^\infty_v(L^2_h)}2^{-k}\|\Delta^v_kw\|_{L^2_v(L^4_h)}\\
\leqslant &C\sum_{|k'-k|\leqslant N_0} \|\partial_1 u^1\|_{H^{0,s_0}}2^{-ks} c_k\|w\|_{H^{\frac{1}{4},s}}.
\end{align*}
This similar as \eqref{F h k} implies that 
$$|\langle \sum_{|k'-k|\leqslant N_0} (S^v_{k-1}-S^v_{k'-1})u^2\Delta^v_{k'}\partial_2w, \Delta^v_kw\rangle|\leqslant  Cc^2_k2^{-2ks}\|\partial_1u\|_{H^{0,s_0}}\|w\|_{H^{\frac{1}{4},s}}^2.$$

To estimate the term $\langle [\Delta^v_k, S^v_{k'-1}u^2]\Delta^v_{k'}\partial_2w, \Delta^v_k w\rangle$, we have for any function $f$,
\begin{align*}
&[\Delta^v_k, S^v_{k'-1}u^2]f(x_1,x_2)\\
=& 2^k\int_{\mathbb{T}_v} h(2^ky_2)(S^v_{k'-1}u^2(x_1,x_2)-S^v_{k'-1}u^2(x_1,x_2-y_2))f(x_1,x_2-y_2)dy_2\\
=& \int_{\mathbb{T}_v\times [0,1]}h_1(2^ky_2)(S^v_{k'-1}\partial_2u^2)(x_1,x_2+(t-1)y_2)f(x_1,x_2-y_2)dy_2dt\\
=&-\int_{\mathbb{T}_v\times [0,1]}h_1(2^ky_2)(S^v_{k'-1}\partial_1u^1)(x_1,x_2+(t-1)y_2)f(x_1,x_2-y_2)dy_2dt,
\end{align*}
where $h=\mathcal{F}^{-1}\chi^{(1)}, (k=-1)$ or $h=\mathcal{F}^{-1}\theta^{(1)}, (k\geqslant 0)$, $h_1(z)=zh(z)$ and we use divergence free condition in the last line. This implies
$$\|[\Delta^v_k, S^v_{k'-1}u^2]f(\cdot, x_2)\|_{L_h^{\frac{4}{3}}}\leqslant C\int |h_1(2^ky_2)|\|S^v_{k'-1}\partial_1u^1\|_{L^\infty_v(L^2_h)}\|f(\cdot,x_2-y_2)\|_{L_h^{{4}}}dy_2$$

Then we get
\begin{align*}
\|[\Delta^v_k, S^v_{k'-1}u^2]f\|_{L^2_v(L_h^{\frac{4}{3}})}\leqslant C 2^{-k}\|S^v_{k'-1}\partial_1u^1\|_{L^\infty_v(L^2_h)}\|f\|_{L^2_v(H_h^{\frac{1}{4}})}.
\end{align*}

Hence 
\begin{align*}
&|\sum_{|k-k'|\leqslant N_0}\langle [\Delta^v_k, S^v_{k'-1}u^2]\Delta^v_{k'}\partial_2w, \Delta^v_k w\rangle |\\
\leqslant &C 2^{-k}\sum_{|k-k'|\leqslant N_0} \|S^v_{k'-1}\partial_1u^1\|_{L^\infty_v(L^2_h)} 2^{k'}\|\Delta^v_{k'}w\|_{L^2_v(H_h^{\frac{1}{4}})}\|\Delta^v_k w\|_{L^2_v(H_h^{\frac{1}{4}})}\\
\leqslant &C \sum_{|k-k'|\leqslant N_0} \|\partial_1u\|_{H^{0,s_0}} 2^{-k's}c_{k'} \|w\|_{H^{\frac{1}{4},s}}2^{-ks} c_k \|w\|_{H^{\frac{1}{4},s}}\\
\leqslant&Cc_k c^{(1)}_k2^{-2ks}\|\partial_1u\|_{H^{0,s_0}}\|w\|_{H^{\frac{1}{4},s}}\|w\|_{H^{\frac{1}{4}.s}},
\end{align*}
where $c^{(1)}_k=2^{ks}\sum_{|k-k'|\leqslant N_0} 2^{-k's}c_{k'}\in l^2$

Combining all the term together, let 
$$d_k'=c_k(b^{(1)}_k+a^{(1)}_k+a^{(2)}_k+\tilde{c}^{(2)}_k +c_k+c^{(1)}_k)\in l^1\quad \text{and } d_k=\frac{d_k'}{\|d_k'\|_{l^1}}$$
then the result holds.

\end{proof}

\bibliography{Ref0}{}  
\bibliographystyle{alpha}

\end{document}